\newcommand*\linenomathpatch[1]{%
	\cspreto{#1}{\linenomath}%
	\cspreto{#1*}{\linenomath}%
	\csappto{end#1}{\endlinenomath}%
	\csappto{end#1*}{\endlinenomath}%
}
\newcommand*\linenomathpatchAMS[1]{%
	\cspreto{#1}{\linenomathAMS}%
	\cspreto{#1*}{\linenomathAMS}%
	\csappto{end#1}{\endlinenomath}%
	\csappto{end#1*}{\endlinenomath}%
}
\let\linenomathAMS\linenomathWithnumbers
\patchcmd\linenomathAMS{\advance\postdisplaypenalty\linenopenalty}{}{}{}
\let\linenomathAMS\linenomathNonumbers
\definecolor{Maroon}{HTML}{ad2231}
\definecolor{webgreen}{HTML}{008000}
\newtheorem{theorem}{Theorem}
\newtheorem{corollary}[theorem]{Corollary}
\newtheorem{proposition}[theorem]{Proposition}
\newtheorem{lemma}[theorem]{Lemma}
\newtheorem{remark}[theorem]{Remark}
\theoremstyle{definition}
\newcommand{\email}[1]{\gdef\@email{\url{#1}}}
\newcommand{\R}{\mathbb{R}}
\newcommand{\ee}{\textnormal{e}}
\newcommand{\dd}{\textnormal{d}}
\newcommand{\Ff}{\mathcal{F}}
\newcommand{\Gg}{\mathcal{G}}
\newcommand{\Aa}{\mathcal{A}}
\newcommand{\Ww}{\mathcal{W}}
\newcommand{\uU}{\mathcal{U}}
\newcommand{\vV}{\mathcal{V}}
\newcommand{\1}{\mathbf{1}}
\newcommand{\W}{\mathbb{W}}
\newcommand{\Z}{\mathbb{Z}}
\newcommand{\PP}{\mathbb{P}}
\newcommand{\E}{\mathbb{E}}
\DeclareMathAlphabet{\mathpzc}{OT1}{pzc}{m}{it}
\begin{document}
	\title{Lévy processes under  level-dependent  Poissonian  switching }
	\author{
		Noah Beelders\footnote{Department of Mathematical Sciences,
			University of Liverpool, \texttt{N.Beelders@liverpool.ac.uk}}\,, \,   Lewis Ramsden\footnote{School for Businesses and Society, Univeristy of York, \texttt{lewis.ramsden@york.ac.uk}} \; \&   Apostolos D. Papaioannou\footnote{Department of Mathematical Sciences,
			University of Liverpool, \texttt{papaion@liverpool.ac.uk}}    
	}
	\maketitle
	
	
	\vspace{0.1in}
	
	\begin{abstract} 
	In this paper, we derive  identities for the upward and downward exit problems and resolvents for a process whose motion changes between two L\'evy processes if it is above (or below) a barrier $b$ and coincides with a Poissonian arrival time. This  can be expressed in the form of a (hybrid)   stochastic differential equation, for which the existence of its solution   is also discussed. All identities are given in terms of new generalisations of scale functions (counterparts of the scale functions from the theory of L\'evy processes). 
		To illustrate the applicability of  our results, the probability of ruin  is obtained for a risk process  with delays in the dividend payments. 
	\end{abstract}
	
	\noindent{\sc Keywords}:  Switching L\'evy processes;  Fluctuation theory; Poisson arrival  times ; Potential measure; Ruin probability. 
	
	\section{Introduction}
	The refracted L\'evy process, first introduced in \cite{KL2010}, is defined as a strong solution to the stochastic differential equation (SDE) 
	\[V_t=X_t-\delta\int_{0}^{t}\mathbf{1}_{(V_s>b)}\dd s,\]
	where the driving noise $X$ is a spectrally negative L\'evy process (SNLP) and $b$, $\delta$ are positive constants. Since then, fluctuations of refracted L\'evy processes and their applications to insurance risk models with dividend payments have received a lot of attention, see \cite{CPRY2019, K2013,  KPP2014,  PY2017, PY2018,  F2014, WLL2023}, to mention a few. 
	
	New generalisations of  refracted L\'evy processes have been introduced in \cite{NY2019}, whose motions above and below $b$ are L\'evy processes, different from each other. In this case, the generalised refracted L\'evy process is a solution to the SDE
	\[L_t=L_0+\int_{0}^{t}\mathbf1_{(L_{t^-}\geq b)}\dd X_s+\int_{0}^{t}\mathbf1_{(L_{t}< b)}\dd Y_s,\]
	where $X$ and $Y$ are two independent spectrally negative L\'evy processes (SNLPs) with (possibly) different L\'evy exponents.As pointed out in \cite{NY2019}, in  the bounded variation  case a solution to the above SDE can be constructed by piecing  the excursions; otherwise we do not know the existence of a solution for the above SDE.
	
	In this paper,  we consider a further  extension  of the  generalised refracted L\'evy process in \cite{NY2019}, in which the switch between $X$ and  $Y$ does not occur when $b$ is crossed continuously, but instead when it is above $b$ and coincides with an arrival epoch of an independent Poisson process.
	Under this extension, the corresponding  process   $U = \{U_t\}_{t \geq 0}$ is  a solution to the (hybrid)  SDE
	\begin{equation} \label{eq:SDE-Main}
		U_t = U_0 + \int^t_0 \1_{\{U_{T_{N(s)}} \leq b\}} \dd X_s +\int^t_0 \1_{\{U_{T_{N(s)}} > b\}} \dd Y_s ,
	\end{equation}
	where $X$ and $Y$ are as above and independent of the Poisson process $N$  with arrival times $T_0 = 0$ and $\{T_i\}_{i\geq 1}$ (see Section  \ref{ConstructionofSwitching} for full details).   Utilising the Poisson arrival epochs, we   show that a pathwise  solution  exists to  Eq.~\eqref{eq:SDE-Main} (sometimes called a hybrid SDE, see \cite{AP2022, JWFX2020}), even in the unbounded variation case. The  aim of this paper is twofold. Firstly,  to  establish a set of identities for the   two sided exit problems and the potential measures (killed and non-killed) of $U$, written in terms of new  generalisations of  scale functions (related to the one and two sided exit problem scale functions of \cite{K2014}). Secondly, to briefly show the relevance of these identities in the context of applications for the ruin problem in risk theory.

	L\'evy processes  observed in Poisson arrival epochs have been introduced in \cite{AIZ2016}. Since then, several modifications  of Poisson arrival epoch points  in connection with L\'evy processes have been developed and have found numerous applications  in insurance risk models, see \cite{ LLWX2018,  LYD2023, L2021}. As such,  letting $X = \{X_t\}_{t \geq 0}$ and ${Y} = \{ {Y}_t := {X}_t - \delta t \}_{t \geq 0}$ for the proposed model, we obtain an  insurance risk process which has delays in the initiation and termination of dividend payments.  The justification of such a risk model occurs naturally since dividend payments in reality are made with delays and not at the exact moment that the surplus crosses some level $b$. 
	
	The  remainder of the paper  is structured as follows.  Section \ref{prelim}  recalls the basic  theory of scale functions and provides useful identities that will be used in the rest of the paper. We show that a solution to Eq.~\eqref{eq:SDE-Main} exists in Section \ref{ConstructionofSwitching} and discuss also the strong Markov property.   In Section  \ref{sec:main}, we define the generalised scale functions which are used to derive  identities for the two sided exit problem (exiting upwards above  level $a>0$ and downwards below level  0), the one-sided exit identities as well as the killed and non-killed potential measures. We lastly provide a brief  application of $U$ as a risk process by choosing $Y$ so that U reduces to a (refracted) risk model with delays in the dividend payments and subsequently derive an explicit expression for the probability of ruin. 
	
	\section{Preliminaries}
	\label{prelim}
	Let $X = \{X_{t}\}_{t \geq 0}$ be a SNLP defined on the filtered space $(\Omega, \mathcal{F}, \{\mathcal{F}_{t}\}_{t \geq 0}, \mathbb{P})$, where the filtration $\{\mathcal{F}_{t}\}_{t \geq 0}$ is assumed to satisfy the usual assumptions of right continuity and completion. We shall denote $\PP_x$ to be the probability measure given  the process  starts at $x$ and  $\E_x$ to be the associated expectation. When $x=0$, we shall drop the subscript. A L\'evy process with no positive jumps (the case of monotone paths is excluded) has its Laplace exponent $\psi(\vartheta):[0,\infty)\rightarrow \mathbb R$ defined as
	$
	\psi(\vartheta) := \log \E[e^{\vartheta X_1}]$,  
	which, by the L\'evy-Khintchine formula, has the form 
	
	\[ \psi(\vartheta) = \mu\vartheta + \frac{\vartheta^2 \sigma^2}{2} + 
	\int_{(-\infty, 0)}^{} \bigl(e^{\vartheta x} -1- \vartheta x \mathbf{1}_{\{x >- 1\}}\bigr) \nu(\mathrm{d}x),\]
	where $\mu \in \R$, $\sigma \geq 0$ 
	and $\nu$, the L\'evy measure, is a $\sigma$-finite measure 
	concentrated on $(-\infty,0)$ satisfying $\int_{(-\infty,0)}(1 \wedge |x|^2) \nu(\mathrm{d}x)  <\infty$. The above shows that $\psi$ is a continuous and strictly convex function, and that it tends to infinity as $\vartheta$ tends to infinity. Thus, for $q \geq 0$, one can define the right-inverse of the Laplace exponent
	$ \Phi_q := \sup\{\vartheta\geq 0: \psi(\vartheta) = q\}$,
	for which $\vartheta = 0$ is the unique solution to $\psi(\vartheta) = 0$ on $[0,\infty)$ if $\psi^{\prime}\left(0^{+}\right) \geq 0$ else there are two solutions.
	Further details about SNLPs can be found in the monographs of \cite{B1996, KK2012, K2014}.

	
	
	It is well-known that the fluctuation identities for $X$ rely heavily on the so-called $W$ and $Z$ \emph{scale functions} (see \cite[Chapter 8]{K2014}).  
	For any $q\geq 0$, we define $W^{(q)}: \R \to [0, \infty)$ to be the unique (up to a scaling constant), continuous increasing function with Laplace transform 
	\begin{equation}
		\int_{0}^{\infty} e^{-\vartheta x}W^{(q)}(x)\mathrm{d}x = \frac{1}{\psi_q(\vartheta)}, \;  \vartheta > \Phi_q, 
		\label{eq:LTofscaleW}
	\end{equation}
	where $\psi_q(\vartheta) := \psi(\vartheta) -q$ and $W^{(q)}(x)=0$ for $x<0$. In the rest of the paper, we write $W$ or $\psi$ instead of $W^{(0)}$ or $\psi_0$ for convenience. We define also  $Z^{(q)}: \mathbb{R} \rightarrow[1, \infty)$ having the form
	\[
	Z^{(q)}(x)=1+q \int_0^x W^{(q)}(y) \mathrm{d} y,
	\]
	and its bivariate generalisation $Z^{(q)}: \mathbb{R} \times [0,\infty) \rightarrow[1, \infty)$ having the form
	\begin{equation}
		Z^{(q)}(x,\theta)=\ee^{\theta x} \Bigl(1-\psi_q(\theta)  \int_0^x \ee^{-\theta y}W^{(q)}(y) \mathrm{d} y \Bigr), \label{eq:GeneralisedZFunc}
	\end{equation}
	where $Z^{(q)}(x,0) = Z^{(q)}(x)$ and $Z^{(q)}(x,\theta) = \ee^{\theta x}$ for $x \leq 0$. 
	With regards to the limits of scale functions, it is well-known (see, for instance, Eqs.~(2.21) and (2.13) in \cite{LLWX2018}) that 
	\begin{equation}
	\frac{W^{(q)}(a-x)}{W^{(q)}(a)}  \rightarrow  e^{-\Phi_q x}, \color{black} \; \quad  \frac{Z^{(q)}(a,\theta)}{W^{(q)}(a)} \rightarrow \frac{\psi_q(\theta)}{\theta - \Phi_q}, \quad \quad \text{ as } a \rightarrow \infty. \label{eq:LimitofRatioofScaleFuncs}
	\end{equation}
	In addition, the following useful identities for convolutions of the scale functions will be used throughout the paper. For any $p, q, x \geq 0$ and $p \neq q$, it holds that     
	\begin{align*}
		(p-q)\int_0^x W^{(p)}(x-y) W^{(q)}(y) \mathrm{d} y&=W^{(p)}(x)-W^{(q)}(x),  \notag \\
		(p-q)\int_0^x W^{(p)}(x-y) Z^{(q)}(y) \mathrm{d} y&=Z^{(p)}(x)-Z^{(q)}(x) .	\notag 
	\end{align*}
	The above identities introduced in \cite{LRZ2014} were used to derive a new class of scale functions, the so-called \emph{second generation scale functions}, with the aim of solving occupation time fluctuation identities. These will be used throughout the paper and have the following forms.  For $p, p+q \geq 0$ and $u, x \in \mathbb{R}$, we  define 	
	\begin{align} 
		\overline{W}_u^{(p, q)}(x)  := & \; W^{(p+q)}(x)-q \int_0^u W^{(p+q)}(x-y) W^{(p)}(y) \mathrm{d} y\notag \\ 
		= & \; W^{(p)}(x)+q \int_u^x W^{(p+q)}(x-y) W^{(p)}(y) \mathrm{d} y, \label{eq:SecondGenScaleFunc1}		
		\\
		\label{eq:SecondGenScaleFunc2}
		\overline{Z}_u^{(p, q)}(x)  := & \;  Z^{(p+q)}(x)-q \int_0^u W^{(p+q)}(x-y) Z^{(p)}(y) \mathrm{d} y\notag \\
		= &  \; Z^{(p)}(x)+q \int_u^x W^{(p+q)}(x-y) Z^{(p)}(y) \mathrm{d} y, 
	\end{align}  
	For the  SNLP $Y=\{Y_t\}_{t \geq 0}$ in Eq.~\eqref{eq:SDE-Main}, similar results as the above hold with the corresponding notation  $\mathbb{W}^{(p)}$ and $\mathbb{Z}^{(p)}$ for each $p \geq 0$ ($\overline{\W}^{(p,q)}$ and $\overline{\Z}^{(p,q)}$ for $p+q \geq 0$) which are interpreted as the counterparts of ${W}^{(p)}$ and ${Z}^{(p)}$ (resp.~$\overline{W}^{(p,q)}$ and $\overline{Z}^{(p,q)}$) associated with the SNLP $X$. Observe also that $Y = X$ yields $\W^{(p)} = W^{(p)}$ ($\overline{\W}^{(p,q)} = \overline{W}^{(p,q)}$) and similarly for $\Z^{(p)}$ ($\overline{\Z}^{(p,q)}$). Furthermore, the Laplace exponent of $Y$ will be denoted as $\psi_q^*(\vartheta) := \psi^*(\vartheta) - q$ with a corresponding right-inverse
	$\varphi_q = \sup \{\vartheta \geq 0: \psi^*(\vartheta) = q\}$.
	%
	\section{Pathwise solution}
	\label{ConstructionofSwitching}
	
	In this section, we discuss the existence of the solution of the SDE in Eq. \eqref{eq:SDE-Main} and show that is also has the strong Markov property. 
	
	Let  ${X}$ and ${Y} $ be  SNLPs starting from $x$. For the construction below, we shall consider $x=0$ (without the loss of generality), and a Poisson process $N := N(t)$ with arrival times $T_0 = 0$ and $T_i = \sum^i_k \xi_{\lambda,k}$, where $\{\xi_{\lambda,k}\}_{k\geq 1}$ is a sequence of i.i.d.~Exp($\lambda$) waiting times with $\lambda < \infty$. Furthermore, ${X}$,${Y}$ and $N$ are adapted to $\Ff_t$, and are independent. We note that $U$ in Eq.~\eqref{eq:SDE-Main}
	has the dynamics of $X$ when it is observed below the barrier $b$ and subsequently switches to the dynamics of $Y$ if it is simultaneously greater than $b$ and an arrival occurs.
	
	To show that such a process has a strong solution, we construct it pathwise. Hence, let the process start at some value $ U_0 = x$ and 
	define the random switching times $K^{-}_{b,0} = 0$, 
	\begin{align*}
		K^{+}_{b,n} &:= \min \{ T_i \geq K^{-}_{b,n-1} : x + X_{T_i}  -  X_{ K^{-}_{b,n-1}} +  \sum^{n-1}_{i=1} (Y_{K^{-}_{b,i}} - Y_{ K^{+}_{b,i}}) +  \sum^{n-1}_{i=1} (X_{K^{+}_{b,i}} - X_{ K^{-}_{b,i-1}}) > b \},  \\
		K^{-}_{b,n} &:= \min \{ T_i \geq K^{+}_{b,n} : x + Y_{T_i} - Y_{K^{+}_{b,n}} + \sum^{n}_{i=1} (X_{K^{+}_{b,i}} - X_{ K^{-}_{b,i-1}}) +  \sum^{n-1}_{i=1} (Y_{K^{-}_{b,i}} - Y_{ K^{+}_{b,i}})  \leq b \}, 
	\end{align*}
	for $n = 1,2,\dots$ in the above. It is clear from the above formulation that $K^{+}_{b,n} < K^{-}_{b,n}$ for $n = 1,2,\dots,$ which creates intervals over which the process switches between the dynamics of $X$ and $Y$. Thus, from the recursive times above, we can define the process as 
	\begin{equation} \label{eq:PathConstruction}
		U_t = \begin{cases}
			x + X_{t}  -  X_{ K^{-}_{b,n}} +  \sum^{n}_{i=1} (Y_{K^{-}_{b,i}} - Y_{ K^{+}_{b,i}}) +  \sum^{n}_{i=1} (X_{K^{+}_{b,i}} - X_{ K^{-}_{b,i-1}}),  & t \in [K^{-}_{b,n},K^{+}_{b,n+1}), n=0,1,2,\dots, \\
			x + Y_{t} - Y_{K^{+}_{b,n}} + \sum^{n}_{i=1} (X_{K^{+}_{b,i}} - X_{ K^{-}_{b,i-1}}) +  \sum^{n-1}_{i=1} (Y_{K^{-}_{b,i}} - Y_{ K^{+}_{b,i}}), &  t \in [K^{+}_{b,n},K^{-}_{b,n}), n=1,2,\dots
		\end{cases}
	\end{equation}
	Furthermore, we can write $K^{+}_{b,n}$ and $K^{-}_{b,n}$ as stopping times w.r.t.~the process $U$ defined above; i.e. given $K^{-}_{b,0} = 0$, we have for $n = 1,2,\dots$ that
	\begin{linenomath}
		\begin{equation*}
			K^{+}_{b,n} := \min \{ T_i \geq K^{-}_{b,n-1} : U_{T_i} > b \}, \quad \text{ and } \quad
			K^{-}_{b,n} := \min \{ T_i \geq K^{+}_{b,n} : U_{T_i} \leq b \}, 
		\end{equation*}
	\end{linenomath}
	which shows that these stopping times are adapted w.r.t.~$\Ff_t$. Thus, the stopping times are well-defined under this filtration. 
	
	Next, we shall show that the above formulation in Eq.~\eqref{eq:PathConstruction} corresponds pathwise to Eq.~\eqref{eq:SDE-Main}. Observe that $\1_{\{U_{T_{N(t)}} \leq b\}} =  1$ for $t \in [K^{-}_{b,n},K^{+}_{b,n+1})$ for all $n=0,1,2,\dots,$ and $0$ otherwise. Thus, for $U_0 = x$, we have for $t \in [K^{-}_{b,n},K^{+}_{b,n+1})$ and $n=0,1,2,\dots$ that
	\begin{align*}
		U_t =& \; 	x + X_{t}  -  X_{ K^{-}_{b,n}} +  \sum^{n}_{i=1} (Y_{K^{-}_{b,i}} - Y_{ K^{+}_{b,i}}) +  \sum^{n}_{i=1} (X_{K^{+}_{b,i}} - X_{ K^{-}_{b,i-1}}) \\
		=& \; x + \int^t_{ K^{-}_{b,n}} \1_{\{U_{T_{N(t)}} \leq b\}} \dd X_s  + \sum^n_{i=1} \int^{K^{+}_{b,i}}_{K^{-}_{b,i-1}}\1_{\{U_{T_{N(s)}} \leq b\}} \dd X_s  + \sum^n_{i=1} \int^{K^{-}_{b,i}}_{K^{+}_{b,i}}\1_{\{U_{T_{N(s)}} > b\}} \dd Y_s \\
		=& \; U_0  + \int^t_0  \1_{(U_{T_{N(s)}} \leq b)} \dd X_s + \int^t_0  \1_{(U_{T_{N(s)}} > b)} \dd Y_s. 
	\end{align*}
	Using the same line of logic as above, the same can be proven for  $t \in [K^{+}_{b,n},K^{-}_{b,n})$ and $n=1,2,\dots$.  
	
	Noting that every compact interval has a finite number of arrivals, the number of times that the process switches in the interval $[0,t]$ is finite. Since this process switches between two well-defined SNLPs for every pair of subsequent stopping times, we have the following theorem.
	
	\begin{theorem} \label{thm:PathwiseConstruction}
		For $U_0 = x$, there exists a strong solution to Eq.~\eqref{eq:SDE-Main}. 
	\end{theorem}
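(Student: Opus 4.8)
The statement amounts to checking that the pathwise recipe in Eq.~\eqref{eq:PathConstruction} is (i) well defined for every $t\ge 0$, (ii) càdlàg and adapted to $\{\Ff_t\}_{t\ge 0}$, and (iii) a solution of Eq.~\eqref{eq:SDE-Main}; it is adaptedness to the driving filtration that promotes such a solution from weak to strong. My plan is to dispatch these three points in turn, the decisive one being (i), namely that the switching times do not accumulate in finite time.

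For (i) and (ii) I would argue as follows. Each of the defining conditions in the recursions for $K^{+}_{b,n}$ and $K^{-}_{b,n}$ is a measurable functional of the increments of $X$ and $Y$ and of the arrival epochs up to the candidate time $T_i$, hence is $\Ff_{T_i}$-measurable; minimising over the (countable, a.s.\ strictly increasing and unbounded) set $\{T_i\}_{i\ge 1}$ therefore yields genuine $\{\Ff_t\}$-stopping times, as already recorded via the reformulation $K^{+}_{b,n}=\min\{T_i\ge K^{-}_{b,n-1}:U_{T_i}>b\}$ and $K^{-}_{b,n}=\min\{T_i\ge K^{+}_{b,n}:U_{T_i}\le b\}$. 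If at some stage the minimising set is empty the recursion terminates and $U$ coincides thereafter with a single shifted SNLP, which is harmless. By construction $0=K^{-}_{b,0}\le K^{+}_{b,1}<K^{-}_{b,1}<K^{+}_{b,2}<\cdots$, and every switch happens at an arrival of $N$; since a.s.\ $\#\{i\ge 1:T_i\le t\}=N(t)<\infty$ for each $t$, at most $N(t)$ switches occur on $[0,t]$, so $K^{\pm}_{b,n}\uparrow\infty$ a.s.\ and $U_t$ is well defined for all $t\ge 0$. On each compact interval $U$ is a concatenation of finitely many time- and space-shifted paths of $X$ and $Y$, hence càdlàg; and since $X$, $Y$, the $T_i$ and the $K^{\pm}_{b,n}$ are all $\{\Ff_t\}$-adapted, so is $U$.

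For (iii) I would simply carry out the computation displayed after Eq.~\eqref{eq:PathConstruction}. The key observation is that for $s\in[K^{-}_{b,n},K^{+}_{b,n+1})$ the last arrival before $s$, namely $T_{N(s)}$, satisfies $U_{T_{N(s)}}\le b$, so $\1_{\{U_{T_{N(s)}}\le b\}}\equiv 1$ and $\1_{\{U_{T_{N(s)}}>b\}}\equiv 0$ on that whole interval, and symmetrically $\1_{\{U_{T_{N(s)}}>b\}}\equiv 1$ on $[K^{+}_{b,n},K^{-}_{b,n})$. Hence on each switching interval the right-hand side of Eq.~\eqref{eq:SDE-Main} collapses to an elementary integral $\int c\,\dd X_s$ or $\int c\,\dd Y_s$ with $c\in\{0,1\}$, which is just the corresponding increment of $X$ or $Y$; telescoping these increments along the switching grid reproduces Eq.~\eqref{eq:PathConstruction} verbatim. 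As this holds for every $t$, $U$ solves Eq.~\eqref{eq:SDE-Main}, and by (ii) the solution is strong.

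I expect the only genuine obstacle to be point (i), ruling out an accumulation of switching times, and it is handled for free by the Poissonian trigger: switches can occur only at arrivals of $N$, and $N$ has finitely many arrivals on every compact, so there is no mechanism for infinitely many switches in finite time. This is precisely the structural advantage over the continuously-refracting SDE of \cite{NY2019}, for which existence in the unbounded-variation-with-Gaussian regime is genuinely delicate. A would-be secondary worry, whether the indicator integrands are admissible for stochastic integration, also evaporates: on each switching interval the integrand is a deterministic constant, so the integrals in Eq.~\eqref{eq:SDE-Main} are pathwise increments of $X$ and $Y$ and call for no Itô-theoretic machinery. The same bookkeeping incidentally gives pathwise uniqueness — $U$ on $[T_k,T_{k+1})$ is determined by $U_{T_k}$ — but only existence is asserted here.
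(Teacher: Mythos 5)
Your proposal follows essentially the same route as the paper: construct $U$ pathwise from the recursive switching times $K^\pm_{b,n}$, observe that switches can occur only at the Poisson arrivals so that there are finitely many on any compact interval, and verify by direct substitution that the piecewise path satisfies Eq.~\eqref{eq:SDE-Main}. Your write-up is somewhat more explicit than the paper's on adaptedness, the càdlàg property, the case where the recursion terminates, and why the stochastic integrals reduce to plain increments, but these are elaborations of the same argument rather than a different approach.
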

	
	\begin{remark}\label{UniquenessOfConstruction}
		\begin{itemize}
			\item[\upshape{(i)}]  \upshape {Given the arrival times of the Poisson process, $T_i$, the pathwise solution guarantees that  there is a unique construction of $U$. \color{black} We note that  in Section \ref{Applications}, our choices for $X$ and $Y$ allow us to prove in full details pathwise uniqueness  which is a consequence of choosing two processes that have positive drifts such that the point $b$ is irregular for itself.
			
			\item[\upshape{(ii)}] Although Eq.\,\eqref{eq:SDE-Main} forms  a (hybrid)  SDE with discontinuous coefficients, the Poissonian mechanism (for finite $\lambda$) significantly simplifies  Eq.\,\eqref{eq:SDE-Main}, as the switching mechanism is only triggered a finite number of times in any given compact
				time interval,  in contrast to the potentially infinite number of switches in the classical refraction model. }
		\end{itemize} 
	\end{remark}

\begin{remark}\label{markov}\upshape{
	 It is clear that $U$ does not have the strong Markov property on its own. Defining the process
	\begin{equation*}
		Q_t:=\1_{\{U_{T_{N(t)}} > b\}}, \label{eq:Qt-definition}
	\end{equation*}
	allows us to rewrite Eq.\,\eqref{eq:SDE-Main} as
	\begin{linenomath}
		\begin{equation*}
			U_t = U_0  +  \int_0^t (1-Q_s )\mathrm{d} X_s + \int_0^t Q_s \mathrm{d} Y_s, \label{eq:SDE-WithQt}
		\end{equation*}
	\end{linenomath}
and for a  stopping time  $\{\tau < \infty\}$ and $t\geq 0$,  we have 
\begin{linenomath}
	\begin{equation*}
		U_{\tau+t} = U_\tau  +  \int_0^t (1-Q_{\tau+s} )\mathrm{d} X_s + \int_0^t Q_{\tau+s} \mathrm{d} Y_s. \label{eq:SDE-WithQt}
	\end{equation*}
\end{linenomath}
Now for $t<T_{N(\tau)+1}-\tau$, it follows that $Q_{\tau+t}=Q_\tau$ and $U_{\tau+t}=U_\tau+(1-Q_\tau)(X_{\tau+t}-X_\tau)+Q_\tau(Y_{\tau+t}-Y_\tau)$. Thus, $(U_{\tau+t}, Q_{\tau+t})$ depends only on $(U_{\tau}, Q_{\tau})$. 
\noindent At the next Poisson epoch, we have  $Q_{\tau+t}=Q_{T_{N(\tau)+1}}=\1_{\{U_{T_{N(\tau)+1}} > b\}}$, where 
\[U_{T_{N(\tau)+1}}=U_\tau+(1-Q_\tau)(X_{T_{N(\tau)+1}}-X_\tau)+Q_\tau(Y_{T_{N(\tau)+1}}-Y_\tau),\]
which also depends   on $(U_{\tau}, Q_{\tau})$. Finally, for   $t>T_{N(\tau)+1}-\tau$, we have 
\begin{linenomath}
	\begin{equation*}
		U_{\tau+t} = U_{T_{N(\tau)+1}}  +  \int_0^t (1-Q_{T_{N(\tau)+1}+s} )\mathrm{d} X_s + \int_0^t Q_{T_{N(\tau)+1}+s} \mathrm{d} Y_s. \label{eq:SDE-WithQt}
	\end{equation*}
\end{linenomath} Thus,  we can use a similar argument to  conclude that for any $t>0$ that $(U_{\tau+t}, Q_{\tau+t})$ depends only on $(U_{\tau}, Q_{\tau})$ and post-$\tau$ increments of $X$, $Y$ and $N$. Therefore, $(U_{t}, Q_{t})$ possesses the strong Markov property.} 
	\end{remark}
	\section{Main results}\label{sec:main}
	
	In this section, we derive fluctuation identities for $U$. More specifically, we shall introduce new generalisations of scale functions (in terms of the classical scale functions in \cite{K2014}) and derive identities for the  upwards and downwards exit problems, as well as the potential measure of $U$. 
	
	To do this, we let $U_0=x$, fix $b \geq 0$ and, for $a \in \mathbb{R}^+$, define the continuous and Poissonian first passage stopping times
	\begin{equation}
			\tau_{a,U}^{+(-)}:=\inf \left\{t>0: U_t>(<)\; a\right\}, \quad \text{ and } \quad {T}_{a,U}^{+ (-)}= \; \min \left\{T_i: {U}_{T_i}> (<) \; a \right\}, \notag
	\end{equation}	
	with the conventions inf $\varnothing=\infty$ and $\min \varnothing = \infty$, respectively, where $U$ in their subscripts indicate the underlying process that is considered. We point out that these subscripts $U$ may change (to $X$ and $Y$) in the rest of the paper, depending  on the underlying process used, without otherwise altering the notion of these stopping times. Clearly, it holds that $\tau_{a,U}^{+ (-)} \leq T_{a,U}^{+ (-)}$, and similar inequalities hold for the stopping times with corresponding subscripts  $X$ and $Y$.
	
	We aim to derive the two-sided exit results for 
	\[
	\E_x\left( e^{-q \tau_{a,U}^{+}} \1_{\{\tau_{a,U}^{+} < \tau_{0,U}^{-}\}} \right) \; \; \text{and} \;\;  \E_x\left( e^{-q \tau_{0,U}^{-}} \1_{\{\tau_{0,U}^{-} < \tau_{a,U}^{+}\}} \right).
	\]
	We emphasize that the exit times in the above are not exit times of the process $U$ observed at Poisson arrivals, but rather the standard exit times of the process $U$ which switches its dynamics at Poissonian times. We shall also show in Section \ref{Applications} that the above Laplace transforms of the upwards and downwards exit times can be used to derive the probability of ruin in a  risk model with delays on dividend payments. Finally, it is worth highlighting that the above exit time identities are comparable to the classical L\'evy fluctuation literature and generalise existing results, see for e.g.~\cite{WLL2023}.
	
	To derive our main fluctuation results for $U$, we require the identities given in the following lemma and corollary.
	
	\begin{lemma}\label{lem:PoissonPotentialsforResolvents}
		Let $0 \leq b \leq a$, $q \geq 0$ and $0 < \lambda < \infty$. Then the following identities hold.
		\begin{enumerate}
			\item[{\upshape{(i)}}] For $ x,y \in [0,a]$
			\begin{equation}
				\E_x \Big(\int^\infty_0 \ee^{-q t} \1_{\{X_t \in \dd y, \; t < T_{b,X}^{+} \wedge \tau_{a,X}^+ \wedge \tau_{0,X}^-\}} \dd t \color{black}\Big) = \Bigl( \frac{\overline{W}^{(q,\lambda)}_b(x)}{\overline{W}^{(q,\lambda)}_b(a)} \overline{W}_{b-y}^{(q,\lambda)}(a-y) - \overline{W}_{b-y}^{(q,\lambda)}(x-y) \Bigr) \dd y, \notag 
			\end{equation}
			
			\item[{\upshape{(ii)}}] For $ x,y \in [0,a]$
			\begin{equation}
				\E_x \Big(\int^\infty_0 \ee^{-q t} \1_{\{Y_t \in \dd y, \; t < T_{b,Y}^{-} \wedge \tau_{a,Y}^+ \wedge \tau_{0,Y}^-\}} \dd t \color{black} \Big) = \Bigl( \frac{\overline{\W}^{(q,\lambda)}_{x-b}(x)}{\overline{\W}^{(q,\lambda)}_{a-b}(a)} \overline{\W}_{a-b}^{(q,\lambda)}(a-y) - \overline{\W}_{x-b}^{(q,\lambda)}(x-y) \Bigr) \dd y, \notag 
			\end{equation}
		\end{enumerate}
		
	\end{lemma}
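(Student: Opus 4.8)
The plan is to reduce both identities to known occupation-type resolvent formulas for a single SNLP that is killed on exiting an interval, combined with the extra killing induced by the Poisson clock. The key observation is that the event $\{t < T_{b,X}^{+}\}$ means that no Poisson arrival has yet occurred at which $X$ sits above $b$; equivalently, up to time $t$ the process $X$ has not been ``caught above $b$'' by the independent $\mathrm{Exp}(\lambda)$ marks. Since the Poisson marks above the level $b$ arrive at rate $\lambda$ only while $X_t > b$, the process $X$ killed at $T_{b,X}^{+}$ is precisely $X$ killed at rate $q$ everywhere plus an additional killing at rate $\lambda$ on the set $\{X_t > b\}$. This is exactly the mechanism for which the second-generation scale functions $\overline{W}^{(q,\lambda)}_b$ and $\overline{Z}^{(q,\lambda)}_b$ of \cite{LRZ2014} were designed (occupation times of a half-line), so I expect the formula to follow from the corresponding two-sided exit / resolvent identity with parameters $(p,q)=(q,\lambda)$ and threshold level $u=b$.

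Concretely, for part (i), I would first condition on the Poisson process and write
\[
\E_x\Big(\int_0^\infty e^{-qt}\1_{\{X_t\in\dd y,\ t<T_{b,X}^{+}\wedge\tau_{a,X}^{+}\wedge\tau_{0,X}^{-}\}}\dd t\Big)
=\E_x\Big(\int_0^\infty e^{-qt}\,e^{-\lambda\int_0^t\1_{\{X_s>b\}}\dd s}\,\1_{\{X_t\in\dd y,\ t<\tau_{a,X}^{+}\wedge\tau_{0,X}^{-}\}}\dd t\Big),
\]
using that, conditionally on the path of $X$, the first arrival time among the marks landing in $\{X_s>b\}$ is exponential with (random) parameter $\lambda\int_0^t\1_{\{X_s>b\}}\dd s$. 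The right-hand side is the $q$-resolvent of $X$ on $[0,a]$ with the extra exponential discounting at rate $\lambda$ accumulated over $\{X>b\}$, which is precisely the occupation-time resolvent computed in \cite{LRZ2014} (see also \cite{LLWX2018}); substituting their formula, which is stated in terms of $W^{(q)}$, $W^{(q+\lambda)}$, and the convolutions defining $\overline{W}^{(q,\lambda)}_b$ in \eqref{eq:SecondGenScaleFunc1}, yields the claimed expression. For part (ii) the roles of ``above'' and ``below'' $b$ are swapped: $\{t<T_{b,Y}^{-}\}$ corresponds to killing $Y$ at rate $\lambda$ on $\{Y_s<b\}$, i.e.\ on the half-line $(-\infty,b)$ rather than $(b,\infty)$. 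I would handle this by the standard reflection/spatial-shift argument — apply part (i)'s computation to the process $-Y$ relative to the level $-b$, or equivalently shift so the relevant threshold for the occupation functional is $u=x-b$ (resp.\ $a-b$) — producing the second-generation scale functions $\overline{\W}^{(q,\lambda)}$ of $Y$ with the shifted arguments appearing in the statement.

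The main obstacle, and the step requiring the most care, is bookkeeping the thresholds and arguments in the second-generation scale functions so that the edge level of the occupation functional is tracked correctly: in (i) the functional kills on $(b,\infty)$ and the natural ``$u$'' parameter is $b-y$ after the spatial shift by $y$ used to express the resolvent density, whereas in (ii) the functional kills on $(-\infty,b)$ and after reflection the natural parameter becomes $x-b$ or $a-b$. I would verify these by checking the two defining representations in \eqref{eq:SecondGenScaleFunc1}–\eqref{eq:SecondGenScaleFunc2} against the boundary behaviour: the density must vanish as $y\uparrow x$ appropriately and must satisfy the correct exit condition at $0$ and $a$. A secondary point is the case $\sigma>0$ (Gaussian component) and/or unbounded variation, where one should confirm that the cited occupation-time resolvent identities hold without extra regularity hypotheses; since those identities in \cite{LRZ2014} are valid for general SNLPs, no additional work is needed beyond citing them. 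Once the threshold bookkeeping is pinned down, both identities are immediate corollaries of the existing occupation-time resolvent formulas.
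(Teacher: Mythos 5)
Your central idea---replacing the Poissonian killing by occupation-time discounting, i.e.\ using independence of $N$ and $X$ to write $\PP_x(T_{b,X}^{+}>t\mid \Ff^X)=\exp\bigl(-\lambda\int_0^t\1_{\{X_s>b\}}\dd s\bigr)$ and hence identifying the left-hand side of (i) with $\E_x\bigl(\int_0^\infty \ee^{-qt}\ee^{-\lambda\int_0^t\1_{\{X_s>b\}}\dd s}\1_{\{X_t\in\dd y,\,t<\tau_{a,X}^+\wedge\tau_{0,X}^-\}}\dd t\bigr)$---is valid, and it is exactly the device the paper itself uses (via Remark 3.2 of \cite{AIZ2016}) in Corollary \ref{lem:PoissonPotentialsAndFluctuations}(iii). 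The gap is in the step where you declare the resulting object to be ``precisely the occupation-time resolvent computed in \cite{LRZ2014}.'' No such resolvent is in \cite{LRZ2014}: that paper provides joint Laplace transforms of occupation times and first-passage times (and auxiliary identities such as Eq.~\eqref{eq:lem-RonnieOccupationDownJump}), but no potential/resolvent densities with occupation-time discounting; and the Poissonian potential measures of \cite{LLWX2018} concern Poissonian passage at the exit levels themselves, not the three-level configuration needed here (Poisson clock attached to the intermediate level $b$, continuous exits at $0$ and $a$). So the ``substitute their formula'' step outsources exactly the content of the lemma. To close your argument you would either have to locate a genuine omega-killed resolvent identity with two-step killing rate $\lambda\1_{(b,\infty)}$ in the literature, or derive it yourself---which is in effect what the paper does: for bounded variation it applies the strong Markov property at $\tau_{b,X}^{\pm}$, uses Eqs.~\eqref{eq:ClassicalExitfromAbove}, \eqref{eq:ClassicalKilledPotential} and \eqref{eq:lem-RonnieOccupationDownJump}, solves the resulting linear equation for the unknown measure $R^{(q,\lambda)}(b,\dd y)$, and then treats the unbounded variation case (including the convergence $T_{b,X}^{+}(n)\wedge t\to T_{b,X}^{+}\wedge t$) by strong approximation. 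Your plan does not avoid any of this work; it only relabels it.

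A secondary but genuine flaw is the treatment of (ii): reflecting to $-Y$ does not let you ``apply part (i)'s computation,'' because $-Y$ is spectrally positive and the scale-function machinery used throughout (and the statement of (i)) is for spectrally negative processes. The correct analogue, as in the paper, is to repeat the level-$b$ Markov decomposition with the roles of the two regions interchanged (the exponential clock now running on $\{Y_s<b\}$), which is where the second representation in Eq.~\eqref{eq:SecondGenScaleFunc1} and the identity $\overline{\W}^{(q+\lambda,-\lambda)}_{b}(x)=\overline{\W}^{(q,\lambda)}_{x-b}(x)$ enter; the ``threshold bookkeeping'' you defer to a final verification is precisely the substance of that computation, not a routine check.
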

	
	\begin{proof} 
		\upshape{(i)} Let $R^{(q,\lambda)}(x,\dd y) = \E_x \big(\int^\infty_0 \ee^{-q t} \1_{\{X_t \in \dd y, \; t < T_{b,X}^{+} \wedge \tau_{a,X}^+ \wedge \tau_{0,X}^-\}}  \dd t \color{black} \big)$ and assume that $X$ has paths of bounded variation. Then, for $x \in [0,b)$, we have by conditioning on $\tau_{b,X}^+$ and the strong Markov property that
		\begin{align}
			R^{(q,\lambda)}(x,\dd y) &= \E_x \Big(\int^\infty_0 \ee^{-q t} \1_{\{X_t \in \dd y, \; t < \tau_{b,X}^{+} \wedge \tau_{0,X}^-\}} \dd t \color{black} \Big) + \E_x \big(\ee^{-q \tau_{b,X}^+} \1_{\{\tau_{b,X}^+ < \tau_{0,X}^-\}} \big) R^{(q,\lambda)}(b,\dd y) \notag \\
			&= \frac{W^{(q)}(x)}{W^{(q)}(b)}\Bigl( W^{(q)}(b-y) \1_{\{y \in [0,b)\}} \dd y + R^{(q,\lambda)}(b,\dd y)\Bigr) - W^{(q)}(x-y) \1_{\{y \in [0,b)\}} \dd y, \label{eq:Lemma(i)-SMPIdentity1}
		\end{align}
		where the last equality follows by using Eqs.~\eqref{eq:ClassicalExitfromAbove} and \eqref{eq:ClassicalKilledPotential} from the Appendix.
		
		Now, let $e_\lambda \sim \text{Exp}(\lambda)$ that is independent of all other random variables. For $x \in [b,a]$, observe that $T^+_{b,X} \wedge \tau_{b,X}^- \overset{d}{=} e_\lambda \wedge \tau_{b,X}^-$. Hence,  by conditioning on $\tau_{b,X}^-$ and using the strong Markov property, it follows that
		\begin{align}
			R^{(q,\lambda)}(x,\dd y) &= \E_x \Big(\int^\infty_0 \ee^{-q t} \1_{\{X_t \in \dd y, \; t < e_\lambda \wedge \tau_{a,X}^{+} \wedge \tau_{b,X}^-\}} \dd t \color{black} \Big) + \E_x \Big(\ee^{-q \tau_{b,X}^-} \1_{\{\tau_{b,X}^- < e_\lambda \wedge \tau_{a,X}^{+}\}} R^{(q,\lambda)}(X_{\tau_{b,X}^-},\dd y) \Big)  \notag \\
			&= \E_x \Big(\int^\infty_0 \ee^{-(q+\lambda) t} \1_{\{X_t \in \dd y, \; t < \tau_{a,X}^{+} \wedge \tau_{b,X}^-\}} \dd t \color{black} \Big) - \E_x \big(\ee^{-(q+\lambda) \tau_{b,X}^-} \1_{\{\tau_{b,X}^- < \tau_{a,X}^{+}\}} W^{(q)}(X_{\tau_{b,X}^-} - y) \big) \1_{\{ y \in [0,b)\}} \dd y \notag \\
			&\;\;\;\;\; + \frac{1}{W^{(q)}(b)} \E_x \big(\ee^{-(q+\lambda) \tau_{b,X}^-} \1_{\{\tau_{b,X}^- <\tau_{a,X}^{+}\}} W^{(q)}(X_{\tau_{b,X}^-}) \big) \times \Bigl( W^{(q)}(b-y) \1_{\{y \in [0,b)\}} \dd y + R^{(q,\lambda)}(b,\dd y)\Bigr) \notag \\
			&= \Bigl( \frac{W^{(q+\lambda)}(x-b)}{W^{(q+\lambda)}(a-b)}W^{(q+\lambda)}(a-y) - W^{(q+\lambda)}(x-y) \Bigr) \1_{\{ y \in [b,a]\}} \dd y \notag \\
			&\;\;\;\;\; - \Bigl( \overline{W}^{(q,\lambda)}_{b-y}(x-y) - \frac{W^{(q+\lambda)}(x-b)}{W^{(q+\lambda)}(a-b)} \overline{W}^{(q,\lambda)}_{b-y}(a-y) \Bigr) \1_{\{ y \in [0,b)\}} \dd y \notag \\
			&\;\;\;\;\; + \Bigl(\frac{\overline{W}^{(q,\lambda)}_{b}(x)}{W^{(q)}(b)} - \frac{W^{(q+\lambda)}(x-b)}{W^{(q+\lambda)}(a-b)} \frac{\overline{W}^{(q,\lambda)}_{b}(a)}{W^{(q)}(b)} \Bigr) \times \Bigl( W^{(q)}(b-y) \1_{\{y \in [0,b)\}} \dd y + R^{(q,\lambda)}(b,\dd y)\Bigr) \notag \\
			&= \Bigl( \frac{W^{(q+\lambda)}(x-b)}{W^{(q+\lambda)}(a-b)}\overline{W}_{b-y}^{(q,\lambda)}(a-y) - \overline{W}_{b-y}^{(q,\lambda)}(x-y) \Bigr) \dd y \notag \\
			&\;\;\;\;\; + \Bigl(\frac{\overline{W}^{(q,\lambda)}_{b}(x)}{W^{(q)}(b)} - \frac{W^{(q+\lambda)}(x-b)}{W^{(q+\lambda)}(a-b)} \frac{\overline{W}^{(q,\lambda)}_{b}(a)}{W^{(q)}(b)} \Bigr) \times \Bigl( W^{(q)}(b-y) \1_{\{y \in [0,b)\}} \dd y + R^{(q,\lambda)}(b,\dd y)\Bigr), \label{eq:Lemma(i)-SMPIdentity2}
		\end{align}
		where the second equality follows by substituting Eq.~\eqref{eq:Lemma(i)-SMPIdentity1}, the third equality follows by using Eqs.~\eqref{eq:ClassicalKilledPotential} and \eqref{eq:lem-RonnieOccupationDownJump} from the Appendix, and the last equality follows by observing that $\overline{W}^{(q,\lambda)}_{b-y}(x-y) = W^{(q+\lambda)}(x-y)$ for $y \in [b,a]$.
		
		Then, by putting $x = b$ in the above equation, we observe that $W^{(q+\lambda)}(0) \neq 0$ for $X$ having bounded variation and also that $\overline{W}^{(q,\lambda)}_{b-y}(b-y) = W^{(q)}(b-y)$ for $y \in [0,a]$ which yields
		\begin{align}
			R^{(q,\lambda)}(b,\dd y) &= \Bigl( \frac{W^{(q+\lambda)}(0)}{W^{(q+\lambda)}(a-b)}\overline{W}_{b-y}^{(q,\lambda)}(a-y) - {W}^{(q)}(b-y)\1_{\{y \in [0,b) \}} \Bigr) \dd y \notag \\
			&\;\;\;\;\; + \Bigl(1 - \frac{W^{(q+\lambda)}(0)}{W^{(q+\lambda)}(a-b)} \frac{\overline{W}^{(q,\lambda)}_{b}(a)}{W^{(q)}(b)} \Bigr) \times \Bigl( W^{(q)}(b-y) \1_{\{y \in [0,b)\}} \dd y + R^{(q,\lambda)}(b,\dd y)\Bigr), \notag
		\end{align}
		and hence that
		\begin{equation}
			R^{(q,\lambda)}(b,\dd y) = \Bigr(\frac{W^{(q)}(b)}{\overline{W}^{(q,\lambda)}_{b}(a)} \overline{W}^{(q,\lambda)}_{b-y}(a-y) - {W}^{(q)}(b-y)\1_{\{y \in [0,b) \}} \Bigr) \dd y. \notag
		\end{equation}
		Substituting the above quantity into Eqs.~\eqref{eq:Lemma(i)-SMPIdentity1} and \eqref{eq:Lemma(i)-SMPIdentity2} yields the desired result.

 To prove the unbounded variation case, we use strong approximation. 
First recall that there exists a sequence of bounded variation processes $\{(X_s^{(n)})_{s \geq 0}: n \geq 1\}$ that strongly approximates $X$; i.e.~that $\lim \limits_{n \rightarrow \infty} \sup_{0 \leq s \leq t}\bigl|  X_s - X_s^{(n)}\bigr| = 0$ for any $t >0$ a.s..
We denote $T_{b,X}^{+}(n) := \min \{T_i>0: X^{(n)}_{T_i} > b \}$, $\tau_{a,X}^{+}(n) := \inf \{t>0: X^{(n)}_{t} > a \}$ and $\tau_{0,X}^{-}(n) := \inf \{t>0: X^{(n)}_{t} < 0 \}$
the stopping times corresponding to each process $X^{(n)}$. Then, it holds (see \cite{LRZ2014} pp.~1421 -- 1422) for any time $t>0$, $\PP_x$-a.s.,\,that $\tau_{a,X}^{+}(n) \wedge t \rightarrow \tau_{a,X}^{+} \wedge t$ and $\tau_{0,X}^{-}(n) \wedge t \rightarrow \tau_{0,X}^{-} \wedge t$.
We now show that, $T_{b,X}^{+}(n) \wedge t \rightarrow T_{b,X}^{+} \wedge t$, $\PP_x$-a.s.~First recall that the processes $N$ (and thus every renewal time $T_i$), $X$ and $X^{(n)}$ are  independent for every $n \geq 1$. Thus, by using Proposition 15, p.~30 of \cite{B1996}, it can be shown that $\PP_x(X_{T_i} = b) = 0$. In addition, since the number of Poisson arrivals in $[0,t]$ are finite, we shall consider the finite set of indices $\mathcal{I}_t := \{i>1:T_i \leq t\}$. If $\mathcal{I}_t$ is empty, then $T_{b,X}^{+}(n) \wedge t = T_{b,X}^{+} \wedge t = t$, $\PP_x$-a.s.~and so their convergence holds trivially. Thus, we need to only consider the case where $\mathcal{I}_t$ is non-empty. Furthermore, since $\lim \limits_{n \rightarrow \infty} \sup_{0 \leq s \leq t\wedge T_i}|  X_s - X_s^{(n)}| \leq \lim \limits_{n \rightarrow \infty} \sup_{0 \leq s \leq t}|  X_s - X_s^{(n)}| = 0$, it converges pointwise. In addition, since $ 1_{\{x>b\}}$ is discontinuous only at $b$ and since $\PP_x(X_{T_i} = b) = 0$, we  have that $\1_{\{X_{T_i}^{(n)} > b\}} \rightarrow \1_{\{X_{T_i} > b\}}$, $\PP_x$-a.s.~for all $i \in \mathcal{I}_t$. From this it follows that there exists a random integer $N$ such that for all
$n \ge N$ these indicators are simultaneously equal for all
$i \in \mathcal I_t$. Hence, for all sufficiently large $n$, the set of
indices $i \in \mathcal I_t$ for which $X^{(n)}_{T_i} > b$ is identical to
the corresponding set for $X$.
Therefore, the first index at which the exceedance occurs is eventually the
same; that is,
\[
\min\{T_i \le t : X^{(n)}_{T_i} > b\}
=
\min\{T_i \le t : X_{T_i} > b\},
\]
for all sufficiently large $n$. If no such index exists, then both minima are
greater than $t$. Consequently,
\[
T_{b,X}^{+}(n)\wedge t \;\to\; T_{b,X}^{+}\wedge t,
\qquad \PP_x\text{-a.s.}
\]\color{black}

	Now,  a similar approximating procedure can be utilised as in \cite{LRZ2014} (pp.~1421 -- 1422) to show that the bounded variation  potential measure and  scale functions converge to that of the unbounded variation cases.
	\color{black}
		\\
		\newline
		\upshape{(ii)} The proof follows the same idea as that of \upshape{(i)}, and thus, for brevity, we state only the main identities that are needed. Hence, let $\widetilde{R}^{(q,\lambda)}(x, \dd y) = \E_x \Big(\int^\infty_0 \ee^{-q t} \1_{\{Y_t \in \dd y, \; t < T_{b,Y}^{-} \wedge \tau_{a,Y}^+ \wedge \tau_{0,Y}^-\}}  \dd t \color{black}  \Big)$ and consider $Y$ for bounded variation paths. Then, for $e_\lambda \sim \text{Exp}(\lambda)$ that is independent of all other random variables and $x \in [0,b)$, observe that $T_{b,Y}^- \wedge \tau_{b,Y}^+ \overset{\dd}{=} e_\lambda \wedge \tau_{b,Y}^+$, and so by conditioning on $\tau_{b,Y}^+$, using the strong Markov property and Eqs.~\eqref{eq:ClassicalExitfromAbove} and \eqref{eq:ClassicalKilledPotential} from the Appendix, we follow the same argument as that used to derive Eq.~\eqref{eq:Lemma(i)-SMPIdentity2} to get
		\begin{equation}
			\widetilde{R}^{(q,\lambda)}(x,\dd y) = \frac{\W^{(q+\lambda)}(x)}{\W^{(q+\lambda)}(b)}\Bigl( \W^{(q+\lambda)}(b-y) \1_{\{y \in [0,b)\}} \dd y + \widetilde{R}^{(q,\lambda)}(b,\dd y)\Bigr) - \W^{(q+\lambda)}(x-y) \1_{\{y \in [0,b)\}} \dd y. \label{eq:Lemma(ii)-SMPIdentity1}
		\end{equation}
		Now, for $x \in [b,a]$, we condition on $\tau_{b,Y}^-$, use the strong Markov property and a substitition of Eq.~\eqref{eq:Lemma(ii)-SMPIdentity1} along with Eqs.~\eqref{eq:ClassicalKilledPotential} and \eqref{eq:lem-RonnieOccupationDownJump} from the Appendix to get
		\begin{align}
			\widetilde{R}^{(q,\lambda)}(x,\dd y) &= \Bigl( \frac{\W^{(q)}(x-b)}{\W^{(q)}(a-b)}\overline{\W}_{b-y}^{(q+\lambda,-\lambda)}(a-y) - \overline{\W}_{b-y}^{(q+\lambda,-\lambda)}(x-y) \Bigr) \dd y \notag \\
			&\;\;\;\;\; + \Bigl(\frac{\overline{\W}^{(q+\lambda,-\lambda)}_{b}(x)}{\W^{(q+\lambda)}(b)} - \frac{\W^{(q)}(x-b)}{\W^{(q)}(a-b)} \frac{\overline{W}^{(q+\lambda,-\lambda)}_{b}(a)}{\W^{(q+\lambda)}(b)} \Bigr) \times \Bigl( \W^{(q+\lambda)}(b-y) \1_{\{y \in [0,b)\}} \dd y + \widetilde{R}^{(q,\lambda)}(b,\dd y)\Bigr). \label{eq:Lemma(ii)-SMPIdentity2}
		\end{align}
		Then, by putting $x = b$ in the above equation and  observing  that $\W^{(q)}(0) \neq 0$ for $Y$ having bounded variation and also that $\overline{\W}^{(q+\lambda,-\lambda)}_{b-y}(b-y) = \W^{(q+\lambda)}(b-y)$ for $y \in [0,a]$,   we can derive that
		\begin{equation}
			\widetilde{R}^{(q,\lambda)}(b,\dd y) = \Bigr(\frac{\W^{(q+\lambda)}(b)}{\overline{\W}^{(q+\lambda,-\lambda)}_{b}(a)} \overline{\W}^{(q+\lambda,-\lambda)}_{b-y}(a-y) - {\W}^{(q+\lambda)}(b-y)\1_{\{y \in [0,b) \}} \Bigr) \dd y. \notag
		\end{equation}
		Then, by substituting the above equation into Eqs.~\eqref{eq:Lemma(ii)-SMPIdentity1} and \eqref{eq:Lemma(ii)-SMPIdentity2}, and noticing also that $\overline{\W}^{(q+\lambda,-\lambda)}_{b}(x) = \overline{\W}^{(q,\lambda)}_{x-b}(x)$, we prove the identity for the bounded variation case. The unbounded variation case is proven by the approximation approach mentioned in the proof of \upshape{(i)}.
	\end{proof}
	
	\begin{corollary}
		\label{lem:PoissonPotentialsAndFluctuations}
		Let $0 \leq b \leq a$, $q \geq 0$ and $0 < \lambda < \infty$. Then the following identities hold:
		\begin{enumerate}
			\item[{\upshape{(i)}}] For $ x \in [0,a]$ and $ y \in [b,a]$,
			\begin{equation}
				\E_x \Big(\ee^{-q T_b^{+}} \1_{\{X_{T_{b,X}^{+}} \in \dd y, \; T_{b,X}^{+} < \tau_{a,X}^+ \wedge \tau_{0,X}^-\}} \Big) = \lambda \Bigl( \frac{\overline{W}^{(q,\lambda)}_b(x)}{\overline{W}^{(q,\lambda)}_b(a)} W^{(q+\lambda)}(a-y) - W^{(q+\lambda)}(x-y) \Bigr) \dd y, \notag 
			\end{equation}
			and
			\begin{equation}
				\E_x \Big(\ee^{-q\tau_{a,X}^+} \1_{\{\tau_{a,X}^+ < T_{b,X}^{+} \wedge \tau_{0,X}^-\}} \Big) = \frac{\overline{W}^{(q,\lambda)}_b(x)}{\overline{W}^{(q,\lambda)}_b(a)}. \notag
			\end{equation}
			\item[{\upshape{(ii)}}] For $x \in [0,a]$ and $y \in [0,b]$, 
			\begin{linenomath}
				\begin{equation} 
					\mathbb{E}_x \Big(\mathrm{e}^{-q T_{b,Y}^{-}} \mathbf{1}_{\{Y_{T_{b,Y}^{-}} \in \mathrm{d} y, T_{b,Y}^{-} < \tau_{a,Y}^{+} \wedge \tau_{0,Y}^{-}\}} \Big) =\lambda\Bigl(\frac{\overline{\mathbb{W}}_{x-b}^{(q, \lambda)}(x)}{\overline{\mathbb{W}}_{a-b}^{(q, \lambda)}(a)} \overline{\mathbb{W}}_{a-b}^{(q, \lambda)}(a-y)-\overline{\mathbb{W}}_{x-b}^{(q, \lambda)}(x-y)\Bigr) \mathrm{d} y, \notag 
				\end{equation}
			\end{linenomath}
			and
			\begin{linenomath}
				\begin{equation} 
					\mathbb{E}_x \Big(\mathrm{e}^{-q \tau_{a,Y}^{+}} \mathbf{1}_{\left\{\tau_{a,Y}^{+}< T_{b,Y}^{-} \wedge \tau_{0,Y}^{-}\right\}}\Big)=\frac{\overline{\mathbb{W}}_{x-b}^{(q, \lambda)}(x)}{\overline{\mathbb{W}}_{a-b}^{(q, \lambda)}(a)}. \notag
				\end{equation}
			\end{linenomath}
			
			\item[{\upshape{(iii)}}] For $ x \in [0,a]$,
			\begin{equation}
				\E^{}_x \left( \ee^{-q \tau_{0,Y}^-} \1_{\{\tau_{0,Y}^- < T_{b,Y}^{-} \wedge \tau_{a,Y}^+  \}} \right) = \; \E^{}_x \left( \ee^{-q \tau_{0,Y}^- - \lambda \int^{\tau_{0,Y}^-}_0 \1_{\{Y_s\in (0,b)\}} \dd s} \, \1_{\{\tau_{0,Y}^- < \tau_{a,Y}^+\}} \right) 
				= \; \overline{\Z}^{(q+\lambda,-\lambda)}_b(x) -  \frac{\overline{\W}^{(q,\lambda)}_{x-b}(x)}{\overline{\W}^{(q,\lambda)}_{a-b}(a)} \overline{\Z}^{(q+\lambda,-\lambda)}_b(a), \notag
			\end{equation}
			and 
			\begin{equation}
				\E^{}_x \left( \ee^{-q \tau_{0,X}^-} \1_{\{\tau_{0,X}^- < T_{b,X}^{+} \wedge \tau_{a,X}^+   \}} \right) 
				= \; \E^{}_x\left( \ee^{-q \tau_{0,X}^- - \lambda \int^{\tau_{0,X}^-}_0 \1_{\{X_s\in(b,a) \}}\dd s} \, \1_{\{\tau_{0,X}^- < \tau_{a,X}^+\}} \right) 
				= \; \overline{Z}^{(q,\lambda)}_{b}(x) - \frac{\overline{W}^{(q,\lambda)}_{b}(x)}{\overline{W}^{(q,\lambda)}_{b}(a)}\overline{Z}^{(q,\lambda)}_{b}(a). \notag
			\end{equation}
		\end{enumerate}	
	\end{corollary}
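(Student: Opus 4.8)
The plan is to extract every identity in this corollary from the potential (resolvent) densities established in Lemma \ref{lem:PoissonPotentialsforResolvents}, rather than redoing the conditioning arguments from scratch. The unifying principle is the standard compensation formula for the Poisson process $N$: since $T_{b,X}^{+}$ (resp.\ $T_{b,Y}^{-}$) is the first Poisson arrival at which $X$ lies strictly above $b$ (resp.\ $Y$ lies at or below $b$), the expected discounted occupation time spent in the relevant region, multiplied by the arrival rate $\lambda$, gives the discounted distribution of the position at that arrival. Concretely, for part (i), conditioning on $\mathcal F_t$ at the arrival epochs and using that the occupation density of $X$ killed at $T_{b,X}^{+}\wedge\tau_{a,X}^{+}\wedge\tau_{0,X}^{-}$ is exactly $R^{(q,\lambda)}(x,\dd y)$ from Lemma \ref{lem:PoissonPotentialsforResolvents}(i), one obtains
\[
\E_x\Big(\ee^{-qT_{b,X}^{+}}\1_{\{X_{T_{b,X}^{+}}\in\dd y,\;T_{b,X}^{+}<\tau_{a,X}^{+}\wedge\tau_{0,X}^{-}\}}\Big)=\lambda\, R^{(q,\lambda)}(x,\dd y)\quad\text{for }y\in[b,a],
\]
and then one simply notes that $\overline W_{b-y}^{(q,\lambda)}(\cdot)=W^{(q+\lambda)}(\cdot)$ when $y\ge b$ (from the defining identity \eqref{eq:SecondGenScaleFunc1}), which reduces the density to the stated form. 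The same compensation argument with Lemma \ref{lem:PoissonPotentialsforResolvents}(ii) gives the first identity of part (ii), with no simplification of the scale functions needed.

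For the upward passage identities (the second formula in (i) and the second in (ii)), the cleanest route is to avoid the potential measure entirely and instead observe that on the event $\{\tau_{a,X}^{+}<T_{b,X}^{+}\wedge\tau_{0,X}^{-}\}$ the process $X$ has not yet met a Poisson arrival above $b$, so this event coincides with the event of exiting $[0,a]$ upwards before exiting downwards, for a version of $X$ killed at the independent rate $\lambda$ while in $(b,a)$. This is precisely the occupation-time two-sided exit problem solved by the second-generation scale functions of \cite{LRZ2014}: the answer is $\overline W_b^{(q,\lambda)}(x)/\overline W_b^{(q,\lambda)}(a)$. (Alternatively, integrate the first density of (i) in $y$ over $[b,a]$ and combine with a one-sided exit identity; both give the same thing, but the occupation-time-exit viewpoint is shorter.) The argument for the $Y$-version is identical, with $\overline W_b^{(q,\lambda)}$ replaced by $\overline{\mathbb W}_{x-b}^{(q,\lambda)}$ and using the identity $\overline{\mathbb W}_b^{(q+\lambda,-\lambda)}(x)=\overline{\mathbb W}_{x-b}^{(q,\lambda)}(x)$ noted at the end of the proof of Lemma \ref{lem:PoissonPotentialsforResolvents}(ii).

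For part (iii), the first equality in each line is just a rewriting: since the Poisson arrivals that would trigger killing occur only while $Y_s\in(0,b)$ (resp.\ while $X_s\in(b,a)$), the event $\{\tau_{0,Y}^{-}<T_{b,Y}^{-}\wedge\tau_{a,Y}^{+}\}$ has, conditionally on $Y$, probability $\exp(-\lambda\int_0^{\tau_{0,Y}^{-}}\1_{\{Y_s\in(0,b)\}}\dd s)$ on $\{\tau_{0,Y}^{-}<\tau_{a,Y}^{+}\}$, which is exactly the claimed exponential functional after taking expectations; similarly for $X$ with region $(b,a)$. The second equality is then the occupation-time two-sided exit identity of \cite{LRZ2014} expressed through $\overline{\mathbb Z}^{(q+\lambda,-\lambda)}_b$ and $\overline{\mathbb W}^{(q,\lambda)}$ (resp.\ $\overline Z^{(q,\lambda)}_b$ and $\overline W^{(q,\lambda)}$); alternatively, one derives it by integrating the resolvent densities of Lemma \ref{lem:PoissonPotentialsforResolvents} against $q\,\dd y$ over the killed region and using $Z^{(q)}(x)=1+q\int_0^x W^{(q)}(y)\dd y$ together with \eqref{eq:SecondGenScaleFunc2} to assemble the $\overline Z$ functions. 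I expect the main obstacle to be bookkeeping rather than anything conceptual: one must keep careful track of which of the three "clocks" ($\lambda$-rate Poisson killing, the upper barrier $a$, the lower barrier $0$) is active in which spatial region, so that the pairing of parameters $(q,\lambda)$ versus $(q+\lambda,-\lambda)$ and of base-points ($b$ versus $x-b$) in the second-generation scale functions comes out consistently; the regularity issues at $b$ are already handled by the bounded-variation-plus-strong-approximation scheme used in Lemma \ref{lem:PoissonPotentialsforResolvents}, so they need only be invoked, not repeated.
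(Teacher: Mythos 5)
Your proposal is correct and, for most of the corollary, coincides with the paper's own argument: the first identities in (i) and (ii) are obtained there exactly as you describe, from the rate-$\lambda$ observation (compensation) identity $\PP_x\big(T_{b,X}^+ \in \dd t,\, X_t \in [b,\infty)\big) = \lambda \PP_x\big(T_{b,X}^+ > t,\, X_t \in [b,\infty)\big)\dd t$ combined with Lemma \ref{lem:PoissonPotentialsforResolvents} and the simplification $\overline{W}^{(q,\lambda)}_{b-y} = W^{(q+\lambda)}$ for $y \geq b$; and part (iii) is proved in the paper precisely via your thinning equivalence (quoted there as Remark 3.2 of \cite{AIZ2016}) followed by Theorem 1 of \cite{LRZ2014} and the convolution identities \eqref{eq:SecondGenScaleFunc1}--\eqref{eq:SecondGenScaleFunc2}. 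The one genuine difference is the upward-passage formula in (i) (and its $Y$-analogue in (ii)): you rewrite $\{\tau_{a,X}^+ < T_{b,X}^+ \wedge \tau_{0,X}^-\}$ as an exponential occupation-time functional and quote the occupation-time two-sided exit identity of \cite{LRZ2014} directly, whereas the paper stays self-contained at that point, writing $\E_x\big(\ee^{-q\tau_{a,X}^+}\1_{\{\tau_{a,X}^+ < T_{b,X}^+ \wedge \tau_{0,X}^-\}}\big)$ as the classical two-sided exit minus the contribution of paths with $T_{b,X}^+ < \tau_{a,X}^+ < \tau_{0,X}^-$, evaluating the latter with the density just proved and collapsing the resulting integral via \eqref{eq:SecondGenScaleFunc1} (for (ii) it simply cites \cite{BPPR2016} and \cite{LLWX2018}). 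Your route is shorter and uses the same external tool the paper already invokes for (iii); the paper's route buys independence from the upward occupation-time identity, needing only the killed potential density and classical exit formulas. The bookkeeping points you flag --- that $\1_{\{X_s>b\}} = \1_{\{X_s \in (b,a)\}}$ up to $\tau_{a,X}^+ \wedge \tau_{0,X}^-$, and the identity $\overline{\W}^{(q+\lambda,-\lambda)}_b(x) = \overline{\W}^{(q,\lambda)}_{x-b}(x)$ in the $Y$-case --- are indeed the only delicate spots, so I see no gap.
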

	
	\begin{proof}
		\textit{\upshape{(i)}} We use the same reasoning as that of Corollary 3.1 in \cite{LLWX2018}. Hence, by noticing that the probability that an observation is made in $(t,t+\dd t)$ is $\lambda \dd t$ and is independent of $X$, we have that $T_{b,X}^+$ satisfies
		\begin{equation*}
			\PP_x\big(T_{b,X}^+ \in \dd t, \; X_t \in [b,\infty)\big) = \lambda \PP_x\big(T_{b,X}^+ > t, \; X_t \in [b,\infty)\big) \dd t,
		\end{equation*}
		and so we use the above to find for $y \in [b,a]$ that
				\begin{align}
			\E_x \bigl( e^{-q T_{b,X}^+} \1_{\{X_{T_{b,X}^+} \in \dd y, T_{b,X}^+ < \tau_{a,X}^+ \wedge \tau_{0,X}^-\}} \bigr) &= \int^\infty_0 e^{-q t} \PP_x \bigl(X_t \in \dd y, \; t < \tau_{a,X}^+ \wedge \tau_{0,X}^-, \; T_{b,X}^+ \in \dd t \bigr) \notag \\
			&= \lambda \int^\infty_0 e^{-q t} \PP_x \bigl(X_t \in \dd y, \; t < T_{b,X}^+ \wedge \tau_{a,X}^+ \wedge \tau_{0,X}^- \bigr) \dd t \notag \\
			&= \lambda \Bigl( \frac{\overline{W}^{(q,\lambda)}_b(x)}{\overline{W}^{(q,\lambda)}_b(a)} W^{(q+\lambda)}(a-y) - W^{(q+\lambda)}(x-y) \Bigr) \dd y, \notag
		\end{align}
		where the last equality follows by using Lemma \ref{lem:PoissonPotentialsforResolvents} (i) and that $\overline{W}^{(q,\lambda)}_{b-y}(x-y)=W^{(q+\lambda)}(x-y)$ for $y \in [b,a]$.
		\newline
		\indent For the second identity, observe by conditioning on $T_{b,X}^+$, using the strong Markov property and then conditioning on $X_{T_{b,X}^+}$ that
		\begin{align}
			\E_x \big(\ee^{-q\tau_{a,X}^+} \1_{\{\tau_{a,X}^+ < T_{b,X}^{+} \wedge \tau_{0,X}^-\}} \big) &= \E_x \big(\ee^{-q\tau_{a,X}^+} \1_{\{\tau_{a,X}^+ < \tau_{0,X}^-\}} \big) - \E_x \big(\ee^{-q\tau_{a,X}^+} \1_{\{ T_{b,X}^{+} < \tau_{a,X}^+ < \tau_{0,X}^-\}} \big)\notag \\
			&= \E_x \big(\ee^{-q\tau_{a,X}^+} \1_{\{\tau_{a,X}^+ < \tau_{0,X}^-\}} \big) - \int^a_b \E_x \big(\ee^{-qT_{b,X}^+} \1_{\{ X_{T_{b,X}^+} \in \dd y, \; T_{b,X}^{+} < \tau_{a,X}^+ \wedge \tau_{0,X}^-\}} \big) \E_y \big(\ee^{-q\tau_{a,X}^+} \1_{\{\tau_{a,X}^+ < \tau_{0,X}^-\}} \big)  \notag \\
			&= \frac{W^{(q)}(x)}{W^{(q)}(a)} - \lambda\int^a_b \Bigl( \frac{\overline{W}^{(q,\lambda)}_b(x)}{\overline{W}^{(q,\lambda)}_b(a)} W^{(q+\lambda)}(a-y) - W^{(q+\lambda)}(x-y) \Bigr) \frac{W^{(q)}(y)}{W^{(q)}(a)} \dd y \notag \\
			&= \frac{1}{W^{(q)}(a)} \Bigl( W^{(q)}(x) - \frac{\overline{W}^{(q,\lambda)}_b(x)}{\overline{W}^{(q,\lambda)}_b(a)} \big[\overline{W}^{(q,\lambda)}_b(a) - W^{(q)}(a) \big] + \big[\overline{W}^{(q,\lambda)}_b(x) - W^{(q)}(x) \big] \Bigr) \notag \\
			&= \frac{\overline{W}^{(q,\lambda)}_b(x)}{\overline{W}^{(q,\lambda)}_b(a)}, \notag
		\end{align}
		where the second equality follows by using the first result of the proof along with Eq.~\eqref{eq:ClassicalExitfromAbove} from the Appendix, and the third equality uses Eq.~\eqref{eq:SecondGenScaleFunc1}.
		\\
		\newline
		\textit{\upshape{(ii)}} The result can be proven in a similar way to the above, but can also be seen directly from Theorem 1.2 in \cite{BPPR2016} or Corollary 3.2 in \cite{LLWX2018}.
		\\
		\newline
		\textit{\upshape{(iii)}} Observe from Remark 3.2 in \cite{AIZ2016} that
		\begin{equation}
			\E^{}_x \left( \ee^{-q \tau_{0,Y}^-} \1_{\{\tau_{0,Y}^- < T_{b,Y}^{-} \wedge \tau_{a,Y}^+  \}} \right) = \; \E^{}_x \left( \ee^{-q \tau_{0,Y}^- - \lambda \int^{\tau_{0,Y}^-}_0 \1_{(0,b)}(Y_s)\dd s} \, \1_{\{\tau_{0,Y}^- < \tau_{a,Y}^+\}} \right).  \notag
		\end{equation}
		Then, Theorem 1 in \cite{LRZ2014} yields that
		\begin{align}
			\; \E^{}_x &\left( \ee^{-q \tau_{0,Y}^- - \lambda \int^{\tau_{0,Y}^-}_0 \1_{(0,b)}(Y_s)\dd s} \, \1_{\{\tau_{0,Y}^- < \tau_{a,Y}^+\}} \right) = \overline{\Z}_0^{(q,\lambda)}(x) - \lambda \int^x_b \W^{(q)}(x-y) \overline{\Z}_0^{(q,\lambda)}(y) \dd y \notag \\
			&- \frac{\overline{\W}_0^{(q,\lambda)}(x) - \lambda \int^x_b \W^{(q)}(x-y) \overline{\W}_0^{(q,\lambda)}(y) \dd y}{\overline{\W}_0^{(q,\lambda)}(a) - \lambda \int^a_b \W^{(q)}(a-y) \overline{\W}_0^{(q,\lambda)}(y) \dd y} \Bigl(\overline{\Z}_0^{(q,\lambda)}(a) - \lambda \int^a_b  \W^{(q)}(a-y) \color{black} \overline{\Z}_0^{(q,\lambda)}(y) \dd y\Bigr) \notag \\
			&= \overline{\Z}^{(q+\lambda,-\lambda)}_b(x) -  \frac{\overline{\W}^{(q,\lambda)}_{x-b}(x)}{\overline{\W}^{(q,\lambda)}_{a-b}(a)} \overline{\Z}^{(q+\lambda,-\lambda)}_b(a), \notag
		\end{align}
		where the last equality holds by using Eqs.~\eqref{eq:SecondGenScaleFunc1} -- \eqref{eq:SecondGenScaleFunc2}.
		
		\noindent 
		The remaining identity can be shown in a similar way by first using Remark 3.2 in \cite{AIZ2016} to  observe that $\E^{}_x \left( \ee^{-q \tau_{0,X}^-} \1_{\{\tau_{0,X}^- < T_{b,X}^{+} \wedge \tau_{a,X}^+   \}} \right)
		= \; \E^{}_x\bigl( \ee^{-q \tau_{0,X}^- - \lambda \int^{\tau_{0,X}^-}_0 \1_{(b,a)}(X_s)\dd s} \, \1_{\{\tau_{0,X}^- < \tau_{a,X}^+\}} \bigr) $ and then using Theorem 1 in \cite{LRZ2014}.
	\end{proof}
	\subsection{Two-sided exit upwards and downwards}\label{Subsec:Two-Sided Exit}
	To derive the exit upwards and downwards, we are required to evaluate expectations that involve the Poissonian stopping times $T_{b,X}^{+}$ and $T_{b,Y}^{-}$. To be more precise, for a positive measurable (multivariate) function $f$, we will need to evaluate expectations of the form
	\[
	\E_x \Big(\ee^{-q T_{b,X}^{+}} \1_{\{ T_{b,X}^{+} < \tau_{a,X}^+ \wedge \tau_{0,X}^-\}} f(X_{T_{b,X}^{+}};z) \Big) \quad \text{ and } \quad \mathbb{E}_x \Big(\mathrm{e}^{-q T_{b,Y}^{-}} \mathbf{1}_{\{ T_{b,Y}^{-} < \tau_{a,Y}^{+} \wedge \tau_{0,Y}^{-}\}} f(Y_{T_{b,Y}^{-}};z) \Big).
	\]
	This can be done by first conditioning on $X_{T_{b,X}^+}$ $(Y_{T_{b,Y}^-})$ and then using Corollary \ref{lem:PoissonPotentialsAndFluctuations} (i) (\ref{lem:PoissonPotentialsAndFluctuations} (ii)) to get that
	\begin{align}
		\E_x \Big(\ee^{-q T_{b,X}^{+}} \1_{\{ T_{b,X}^{+} < \tau_{a,X}^+ \wedge \tau_{0,X}^-\}} f(X_{T_{b,X}^{+}};z) \Big) &= \int^a_b \E_x \Big(\ee^{-q T_{b,X}^{+}} \1_{\{X_{T_{b,X}^{+}} \in \dd y, \; T_{b,X}^{+} < \tau_{a,X}^+ \wedge \tau_{0,X}^-\}} \Big)f(y;z) \notag \\
		&= \frac{\overline{W}^{(q,\lambda)}_b(x)}{\overline{W}^{(q,\lambda)}_b(a)} \lambda \int^a_b W^{(q+\lambda)}(a-y) f(y;z) \dd y - \lambda \int^x_b W^{(q+\lambda)}(x-y) f(y;z) \dd y, \label{eq:Tb+ExpectationEvaluation}
	\end{align}
	and
	\begin{align}
		\mathbb{E}_x \Big(\mathrm{e}^{-q T_{b,Y}^{-}} \mathbf{1}_{\{ T_{b,Y}^{-} < \tau_{a,Y}^{+} \wedge \tau_{0,Y}^{-}\}} f(Y_{T_{b,Y}^{-}};z) \Big) &= \int^b_0 \mathbb{E}_x \Big(\mathrm{e}^{-q T_{b,Y}^{-}} \mathbf{1}_{\{Y_{T_{b,Y}^{-}} \in \mathrm{d} y, T_{b,Y}^{-} < \tau_{a,Y}^{+} \wedge \tau_{0,Y}^{-}\}} \Big)f(y;z) \notag \\
		&=\frac{\overline{\mathbb{W}}_{x-b}^{(q, \lambda)}(x)}{\overline{\mathbb{W}}_{a-b}^{(q, \lambda)}(a)} \lambda \int^b_0 \overline{\mathbb{W}}_{a-b}^{(q, \lambda)}(a-y) f(y;z) \dd y - \lambda \int^b_0 \overline{\mathbb{W}}_{x-b}^{(q, \lambda)}(x-y) f(y;z) \dd y, \label{eq:Vb-ExpectationEvaluation}
	\end{align}
	respectively. As a result, it is clear that the exit identities will contain integrals of the form
\[
	\lambda \int^x_b W^{(q+\lambda)}(x-y) f(y;z) \dd y \quad \text{ and } \quad \lambda \int^b_0 \overline{\mathbb{W}}_{x-b}^{(q, \lambda)}(x-y) f(y;z) \dd y,
\]
		and thus, for different choices of the function $f$, let us define auxiliary functions (containing the integrals above) in order to formulate our results more concisely. For $q,\lambda,x,u,b,z \geq 0$, let
\begin{align}
		\gamma_{b}^{(q,\lambda)}(x;z) &= W^{(q)}(x-z) - \overline{\W}^{(q,\lambda)}_{x-b}(x-z) + \lambda \int_0^{b-z} \overline{\mathbb{W}}_{x-b}^{(q,\lambda)}(x-z-y) W^{(q)}(y) \mathrm{d} y, \label{eq:Thm1-GammaAuxiliaryFuncs}\\
		\alpha_b^{(q,\lambda)}(x) &= Z^{(q)}(x) - \overline{\mathbb{Z}}_{b}^{(q+\lambda,-\lambda)}(x) + \lambda \int_0^{b} \overline{\mathbb{W}}_{x-b}^{(q,\lambda)}(x-y) Z^{(q)}(y) \mathrm{d} y , \label{eq:Thm1-AlphaAuxiliaryFuncs}
	\end{align}
	and further let
	\begin{align}
		\mathcal{W}_u^{(q,\lambda)}(x;z) =& \; \overline{\mathbb{W}}_{x-b}^{(q,\lambda)}(x-z) + \lambda \int_u^{x}  W^{(q+\lambda)}(x-y) \overline{\mathbb{W}}_{y-b}^{(q,\lambda)}(y-z) \mathrm{d} y, \label{eq:Thm1-WConvolutionAuxiliaryFuncs} \\
		\mathcal{G}^{(q,\lambda)}_{u}(x;z) =& \;  \gamma_{b}^{(q, \lambda)}(x;z) +  \lambda \int^x_u  W^{(q+\lambda)}(x-y) \gamma_{b}^{(q, \lambda)}(y;z) \mathrm{d} y,  \label{eq:Thm1-GConvolutionAuxiliaryFuncs}\\
		\mathcal{A}^{(q,\lambda)}_{u}(x) =& \;  \alpha_b^{(q, \lambda)}(x) +  \lambda \int^x_u  W^{(q+\lambda)}(x-y) \alpha_b^{(q, \lambda)}(y) \mathrm{d} y. \label{eq:Thm1-AConvolutionAuxiliaryFuncs} 
	\end{align}
	We will use the convention that $\gamma_{b}^{(q,\lambda)}(x) := \gamma_{b}^{(q,\lambda)}(x;0)$,
	$\mathcal{W}^{(q,\lambda)}_{u} (x) :=  \mathcal{W}^{(q,\lambda)}_{u} (x;0)$  and $\mathcal{G}^{(q,\lambda)}_{u} (x) := \mathcal{G}^{(q,\lambda)}_{u} (x;0)$, and will regularly use that $\mathcal{W}_x^{(q,\lambda)}(x;z) = \overline{\mathbb{W}}_{x-b}^{(q,\lambda)}(x-z)$, $\mathcal{G}^{(q,\lambda)}_{x}(x;z) =  \gamma_{b}^{(q, \lambda)}(x;z)$ and 	$\mathcal{A}^{(q,\lambda)}_{x}(x) =  \alpha_b^{(q, \lambda)}(x)$. 
	
	\begin{theorem} \label{Thm:ExitFromAbove}
		For $q, \lambda \geq 0$ and $0 \leq x, b \leq a$,
		\begin{equation}
			\mathbb{E}_x\left(\ee^{-q \tau_{a,U}^{+}} \mathbf{1}_{\{\tau_{a,U}^{+}<\tau_{0,U}^{-}\}}\right)=\frac{\mathcal{U}_{b, a}^{(q, \lambda)}(x)}{\mathcal{U}_{b, a}^{(q, \lambda)}(a)} ,   \label{ThmMain:ExitFromAbove}
		\end{equation}
		where
		\begin{equation} 
			\mathcal{U}_{b, a}^{(q, \lambda)}(x;y) = W^{(q)}(x-y) - \1_{\{x  > b\}}\Big(\mathcal{G}_{x}^{(q, \lambda)}(x;y) -\frac{\mathcal{W}_{x}^{(q, \lambda)}(x)}{\mathcal{W}_{b}^{(q, \lambda)}(a)}  \mathcal{G}_{b}^{(q, \lambda)}(a;y) \Big). \label{eq:UScaleFunc}   
		\end{equation}	
            with the convention that $\uU_{b, a}^{(q, \lambda)}(x) = \uU_{b, a}^{(q, \lambda)}(x;0)$
	\end{theorem}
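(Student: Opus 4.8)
The plan is to characterise
$h(x):=\E_x\bigl(\ee^{-q\tau_{a,U}^{+}}\1_{\{\tau_{a,U}^{+}<\tau_{0,U}^{-}\}}\bigr)$, $x\in[0,a]$, by decomposing a path of $U$ at its first switching epoch, feeding in the Poissonian exit and overshoot identities of Corollary~\ref{lem:PoissonPotentialsAndFluctuations}, and solving the resulting linear system. First I would use the pathwise construction of Section~\ref{ConstructionofSwitching}: a copy of $U$ started at $x$ runs as $X$ on $[0,K^{+}_{b,1})$ when $x\le b$ and as $Y$ on $[0,K^{-}_{b,1})$ when $x>b$; conditionally on not having left $[0,a]$, the process sits at a point of $[b,a]$ when it switches $X\to Y$ and at a point of $[0,b]$ when it switches $Y\to X$, where $Q$ takes the value $1$ (resp.\ $0$). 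By the strong Markov property of $(U,Q)$ (Lemma~\ref{Thm:SMPforBivariateProcess}) and memorylessness of the Poisson clock, this gives for $x\in[0,b]$
\begin{align*}
h(x)&=\E_x\bigl(\ee^{-q\tau_{a,X}^{+}}\1_{\{\tau_{a,X}^{+}<T_{b,X}^{+}\wedge\tau_{0,X}^{-}\}}\bigr)+\int_b^a h(y)\,\E_x\bigl(\ee^{-qT_{b,X}^{+}}\1_{\{X_{T_{b,X}^{+}}\in\dd y,\ T_{b,X}^{+}<\tau_{a,X}^{+}\wedge\tau_{0,X}^{-}\}}\bigr),
\end{align*}
and for $x\in(b,a]$
\begin{align*}
h(x)&=\E_x\bigl(\ee^{-q\tau_{a,Y}^{+}}\1_{\{\tau_{a,Y}^{+}<T_{b,Y}^{-}\wedge\tau_{0,Y}^{-}\}}\bigr)+\int_0^b h(y)\,\E_x\bigl(\ee^{-qT_{b,Y}^{-}}\1_{\{Y_{T_{b,Y}^{-}}\in\dd y,\ T_{b,Y}^{-}<\tau_{a,Y}^{+}\wedge\tau_{0,Y}^{-}\}}\bigr),
\end{align*}
the discarded events (downward exit, or never exiting $[0,a]$) contributing $0$.

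Next I would substitute the four identities of Corollary~\ref{lem:PoissonPotentialsAndFluctuations}(i)--(ii), turning the above into explicit linear integral equations. For $x\le b$ the kernel $W^{(q+\lambda)}(x-y)$ produced by part~(i) vanishes (since $y\ge b\ge x$), so the first equation collapses to $h(x)=\kappa\,\overline{W}_b^{(q,\lambda)}(x)=\kappa\,W^{(q)}(x)$, where $\overline{W}_b^{(q,\lambda)}(x)=W^{(q)}(x)$ on $[0,b]$ by \eqref{eq:SecondGenScaleFunc1} and $\kappa:=\overline{W}_b^{(q,\lambda)}(a)^{-1}\bigl(1+\lambda\int_b^a W^{(q+\lambda)}(a-y)h(y)\,\dd y\bigr)$ is, so far, an unknown constant. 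Inserting $h(y)=\kappa\,W^{(q)}(y)$, $y\in[0,b]$, into the second equation and recognising the definition \eqref{eq:Thm1-GammaAuxiliaryFuncs} of $\gamma_b^{(q,\lambda)}$ then yields, for $x>b$,
\[
h(x)=\kappa\bigl(W^{(q)}(x)-\gamma_b^{(q,\lambda)}(x)\bigr)+\overline{\W}_{x-b}^{(q,\lambda)}(x)\,\frac{1+\kappa\bigl(\gamma_b^{(q,\lambda)}(a)-W^{(q)}(a)\bigr)}{\overline{\W}_{a-b}^{(q,\lambda)}(a)}.
\]

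Finally I would pin $\kappa$ down. Substituting this expression for $h$ back into the definition of $\kappa$ and evaluating the three convolutions $\lambda\int_b^a W^{(q+\lambda)}(a-y)W^{(q)}(y)\,\dd y$, $\lambda\int_b^a W^{(q+\lambda)}(a-y)\gamma_b^{(q,\lambda)}(y)\,\dd y$ and $\lambda\int_b^a W^{(q+\lambda)}(a-y)\overline{\W}_{y-b}^{(q,\lambda)}(y)\,\dd y$ by means of \eqref{eq:SecondGenScaleFunc1}, \eqref{eq:Thm1-GConvolutionAuxiliaryFuncs} and \eqref{eq:Thm1-WConvolutionAuxiliaryFuncs} at $u=b$, $x=a$, the terms proportional to $\kappa\,\overline{W}_b^{(q,\lambda)}(a)$ cancel and one is left with the scalar relation $\bigl(1+\kappa(\gamma_b^{(q,\lambda)}(a)-W^{(q)}(a))\bigr)/\overline{\W}_{a-b}^{(q,\lambda)}(a)=\kappa\,\mathcal{G}_b^{(q,\lambda)}(a)/\mathcal{W}_b^{(q,\lambda)}(a)$. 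Feeding this into the previous display and using the conventions $\mathcal{W}_x^{(q,\lambda)}(x)=\overline{\W}_{x-b}^{(q,\lambda)}(x)$, $\mathcal{G}_x^{(q,\lambda)}(x)=\gamma_b^{(q,\lambda)}(x)$ yields $h(x)=\kappa\,\uU_{b,a}^{(q,\lambda)}(x)$ for all $x\in[0,a]$, with $\uU_{b,a}^{(q,\lambda)}$ as in \eqref{eq:UScaleFunc}. Since the scalar relation is linear in $\kappa$ with coefficient $\mathcal{U}_{b,a}^{(q,\lambda)}(a)\,\mathcal{W}_b^{(q,\lambda)}(a)$ and $\mathcal{W}_b^{(q,\lambda)}(a)>0$, it forces $\kappa=1/\uU_{b,a}^{(q,\lambda)}(a)$ (consistently with $h(a)=1$, which holds because $a$ is upward-regular for $U$, this running as the non-monotone SNLP $Y$ on $\{U_0>b\}$), giving \eqref{ThmMain:ExitFromAbove}.

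The main obstacle I anticipate is the bookkeeping in these two analytic steps: keeping track of which side of $b$ the process lands on (so that the post-switch Markov kernel is exactly $h$ evaluated there), spotting the vanishing of the $X$-kernel for $x\le b$ that decouples the system, and then matching the second-generation scale-function identities \eqref{eq:SecondGenScaleFunc1}--\eqref{eq:SecondGenScaleFunc2} with the defining convolutions \eqref{eq:Thm1-GammaAuxiliaryFuncs}--\eqref{eq:Thm1-WConvolutionAuxiliaryFuncs} of the auxiliary functions so that everything collapses to the single scalar equation for $\kappa$. Once this alignment is found, uniqueness of the solution of the linear system is automatic, the bounded/unbounded variation dichotomy is already absorbed into Corollary~\ref{lem:PoissonPotentialsAndFluctuations}, and the degenerate case $b=0$ is covered by the same formulas.
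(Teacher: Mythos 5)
Your proposal is correct and follows essentially the same route as the paper's proof: a strong Markov decomposition at the Poissonian switching epochs $T_{b,X}^{+}$ and $T_{b,Y}^{-}$, substitution of the identities of Corollary \ref{lem:PoissonPotentialsAndFluctuations}, and reduction to one scalar linear equation for the unknown constant (your $\kappa$ is exactly the paper's $p^{X}(b)/W^{(q)}(b)$). The only differences are organisational — the paper handles $x\le b$ by first passing continuously to level $b$ and determines $p^{X}(b)$ via the double expectation and the convolution identity \eqref{eq:ConvolutionOfUbaFunc}, whereas you decompose at $T_{b,X}^{+}$ for all $x\le b$ and close the system by feeding $h$ back into the definition of $\kappa$ using \eqref{eq:SecondGenScaleFunc1}, \eqref{eq:Thm1-WConvolutionAuxiliaryFuncs} and \eqref{eq:Thm1-GConvolutionAuxiliaryFuncs} — which amounts to the same computation in a different order.
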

	
	
	\begin{proof}
		We first note that $U$ starts with either $X$ or $Y$ dynamics depending on its starting position. Thus, $ \E^{}_x \big( \ee^{-q \tau_{a,U}^+ } \1_{\{\tau_{a,U}^+ < \tau_{0,U}^-\}}\big)$ will be denoted as $p^X(x)$ for $x \in [0,b]$, and $p^Y(x)$ for $x \in (b,a]$.
		
		Now, suppose that $x \in [0,b]$. Using the strong Markov property and Eq.~\eqref{eq:ClassicalExitfromAbove} from the Appendix, we have that
		\begin{align}
			p^{ X }(x) &= \E^{}_x \bigl( \ee^{-q \tau_{b,U}^+ } \1_{\{\tau_{b,U}^+ < \tau_{0,U}^-\}} \E^{}_{U_{\tau_{b,U}^+}} \bigl(  \ee^{-q \tau_{a,U}^+ } \1_{\{\tau_{a,U}^+ < \tau_{0,U}^-\}} \bigr) \bigr)
			= \E^{}_x \bigl( \ee^{-q \tau_{b,X}^+ } \1_{\{\tau_{b,X}^+ < \tau_{0,X}^-\}} \bigr) p^{ X }(b) = \frac{W^{(q)}(x)}{W^{(q)}(b)} p^{ X }(b), \label{eq:Thm1-SMPIdentity1}
		\end{align}
		where the second last equality follows since $\{X_t, t < T_{b,X}^+\}$ and $\{U_t, t < T_{b,U}^+\}$ have the same distribution w.r.t. $\PP_x$ when $x \in [0,b]$, and by recalling that $\tau_{b,X}^+ \leq T_{b,X}^+$ and $\tau_{b,U}^+ \leq T_{b,U}^+$.
		
		Similarly, suppose that the process starts at $x \in (b,a]$, and notice that $\{Y_t, t < T_{b,Y}^-\}$ and $\{U_t, t < T_{b,U}^-\}$ have the same distribution w.r.t. $\PP_x$ for these $x$-values. Therefore, by conditioning on whether $T_{b,U}^-$ or $\tau_{a,U}^+$ occurs first and using again the strong Markov property along with Corollary \ref{lem:PoissonPotentialsAndFluctuations} (ii), we get that
		\begin{align}
			p^{ Y }(x) &= \E^{}_x \left( \ee^{-q \tau_{a,U}^+} \1_{\{\tau_{a,U}^+ < T_{b,U}^{-} \wedge \tau_{0,U}^- \}} \right) + \E^{}_x \left( \ee^{-q \tau_{a,U}^+} \1_{\{T_{b,U}^{-} < \tau_{a,U}^+ <\tau_{0,U}^- \}} \right) \notag \\
			&= \E^{}_x \left( \ee^{-q \tau_{a,Y}^+} \1_{\{\tau_{a,Y}^+ < T_{b,Y}^{-} \wedge \tau_{0,Y}^-  \}} \right) + \E^{}_x \bigl( \ee^{-q T_{b,U}^{-}} \1_{\{T_{b,U}^{-} < \tau_{a,U}^+ \wedge \tau_{0,U}^-\}} \E^{}_{U_{T_{b,U}^{-}}} \bigl( \ee^{-q \tau_{a,U}^+ } \1_{\{\tau_{a,U}^+ < \tau_{0,U}^-\}} \bigr) \bigr) \notag \\
			&= \frac{\overline{\W}^{(q,\lambda)}_{x-b}(x)}{\overline{\W}^{(q,\lambda)}_{a-b}(a)} + \E^{}_x \bigl( \ee^{-q T_{b,Y}^{-}} \1_{\{T_{b,Y}^{-} < \tau_{a,Y}^+ \wedge \tau_{0,Y}^-\}} \E^{}_{Y_{T_{b,Y}^{-}}} \bigl( \ee^{-q \tau_{a,U}^+ } \1_{\{\tau_{a,U}^+ < \tau_{0,U}^-\}} \bigr) \bigr) \notag \\
			&= \frac{\Ww^{(q,\lambda)}_{x}(x)}{\Ww^{(q,\lambda)}_{a}(a)} + \frac{p^{ X }(b)}{W^{(q)}(b)} \E^{}_x \left( \ee^{-q T_{b,Y}^{-}} \1_{\{T_{b,Y}^{-} < \tau_{a,Y}^+ \wedge \tau_{0,Y}^-\}} W^{(q)}(Y_{T_{b,Y}^{-}}) \right), \label{eq:Thm1-SMPIdentity2}
		\end{align}
		where the last equality holds by using Eq.~\eqref{eq:Thm1-WConvolutionAuxiliaryFuncs} and substituting Eq.~\eqref{eq:Thm1-SMPIdentity1}. 
		
		From Eqs.~\eqref{eq:Thm1-SMPIdentity1} -- \eqref{eq:Thm1-SMPIdentity2}, it remains to derive $p^{ X }(b)$. To derive this quantity, we use a similar line of reasoning as for Eq.~\eqref{eq:Thm1-SMPIdentity2}, i.e.~by  conditioning on whether $T_{b,U}^+$ or $\tau_{a,U}^+$ occurs first and using the strong Markov property along with Corollary \ref{lem:PoissonPotentialsAndFluctuations} (i), 
		\begin{align}
			p^{ X }(b) =& \; \E^{}_b \left( \ee^{-q \tau_{a,U}^+} \1_{\{\tau_{a,U}^+ < T_{b,U}^{+} \wedge \tau_{0,U}^- \}} \right) + \E^{}_b \left( \ee^{-q \tau_{a,U}^+} \1_{\{T_{b,U}^{+} < \tau_{a,U}^+ < \tau_{0,U}^- \}} \right) \notag \\
			=& \; \frac{\overline{W}^{(q,\lambda)}_{b}(b)}{\overline{W}^{(q,\lambda)}_{b}(a)} + \E^{}_b \left( \ee^{-q T_{b,X}^{+}} \1_{\{T_{b,X}^{+} < \tau_{a,X}^+ \wedge \tau_{0,X}^-\}} p^{ Y }(X_{T_{b,X}^{+}})  \right) \notag \\
			=& \; \frac{\overline{W}^{(q,\lambda)}_{b}(b)}{\overline{W}^{(q,\lambda)}_{b}(a)} + \frac{1}{\Ww^{(q,\lambda)}_{a}(a)} \E^{}_b \bigl( \ee^{-q T_{b,X}^{+}} \1_{\{T_{b,X}^{+} < \tau_{a,X}^+ \wedge \tau_{0,X}^-\}} \Ww^{(q,\lambda)}_{X_{T_{b,X}^{+}}} (X_{T_{b,X}^{+}}) \bigr)\notag  \\
			& + \frac{p^{ X }(b)}{W^{(q)}(b)} \E^{}_b \bigl( \ee^{-q T_{b,X}^{+}} \1_{\{T_{b,X}^{+} < \tau_{a,X}^+ \wedge \tau_{0,X}^-\}} \E^{}_{X_{T_{b,X}^{+}}} \bigl( \ee^{-q T_{b,Y}^{-}} \1_{\{T_{b,Y}^{-} < \tau_{a,Y}^+ \wedge \tau_{0,Y}^-\}} W^{(q)}(Y_{T_{b,Y}^{-}}) \bigr) \bigr), \label{eq:Thm1-SMPIdentity3}
		\end{align}
		where the last line holds by substituting Eq.~\eqref{eq:Thm1-SMPIdentity2} into the second equality above.
		
		We now aim to evaluate the two expectations of the above equation. Using Eq.~\eqref{eq:Tb+ExpectationEvaluation} and \eqref{eq:Thm1-WConvolutionAuxiliaryFuncs},  
		\begin{equation}
			\E^{}_b \Bigl( \ee^{-q T_{b,X}^{+}} \1_{\{T_{b,X}^{+} < \tau_{a,X}^+ \wedge \tau_{0,X}^-\}} \Ww^{(q,\lambda)}_{X_{T_{b,X}^{+}}} (X_{T_{b,X}^{+}};y) \Bigr)
			=  \frac{\overline{W}_{b}^{(q,\lambda)}(b)}{\overline{W}_{b}^{(q,\lambda)}(a)} \Bigl( \Ww_{b}^{(q,\lambda)}(a;y) - {\Ww}_{a}^{(q,\lambda)}(a;y) \Bigr),  \label{eq:Thm1Proof-ExpTb+}
		\end{equation}
		for $y \geq 0$, and thus the first expectation in Eq. \eqref{eq:Thm1-SMPIdentity3} is given by the above for $y=0$.
		To evaluate the second expectation of Eq.~\eqref{eq:Thm1-SMPIdentity3}, first note that for $x > b$ from Eqs.~\eqref{eq:Vb-ExpectationEvaluation} and \eqref{eq:Thm1-GConvolutionAuxiliaryFuncs} that 
		\begin{align}
			\E^{}_{x} &\Bigl( \ee^{-q T_{b,Y}^{-}} \1_{\{T_{b,Y}^{-} < \tau_{a,Y}^+ \wedge \tau_{0,Y}^-\}} W^{(q)}(Y_{T_{b,Y}^{-}}) \Bigr) \notag \\
			=& \; \frac{\Ww_{x}^{(q,\lambda)}(x)}{\Ww_{a}^{(q,\lambda)}(a)} \Bigl( \Gg_{a}^{(q,\lambda)}(a) - W^{(q)}(a)  + \color{black} \Ww_{a}^{(q,\lambda)}(a)\Bigr) - \Bigl( \Gg_{x}^{(q,\lambda)}(x) - W^{(q)}(x)  + \color{black} \Ww_{x}^{(q,\lambda)}(x) \Bigr) \notag \\
			=& \; \frac{\Ww_{x}^{(q,\lambda)}(x)}{\Ww_{a}^{(q,\lambda)}(a)} \Bigl( \frac{\mathcal{W}_{a}^{(q, \lambda)}(a)}{\mathcal{W}_{b}^{(q, \lambda)}(a)}  \mathcal{G}_{b}^{(q, \lambda)}(a) -  \mathcal{U}_{b, a}^{(q, \lambda)}(a) \Bigr) -  \Bigl( \frac{\mathcal{W}_{x}^{(q, \lambda)}(x)}{\mathcal{W}_{b}^{(q, \lambda)}(a)}  \mathcal{G}_{b}^{(q, \lambda)}(a) -  \mathcal{U}_{b, a}^{(q, \lambda)}(x) \Bigr) \notag \\
			=& \; 	\mathcal{U}_{b, a}^{(q, \lambda)}(x) - \frac{\mathcal{W}_{x}^{(q, \lambda)}(x)}{\mathcal{W}_{a}^{(q, \lambda)}(a)} \mathcal{U}_{b, a}^{(q, \lambda)}(a) , \label{eq:Thm1Proof-ExpVb-}
		\end{align} 
		where the last equality follows by using  Eq.~\eqref{eq:UScaleFunc}.  Now to evaluate  the double expectation in \eqref{eq:Thm1-SMPIdentity3}, first note from Eq.~\eqref{eq:ConvolutionOfUbaFunc} in the Appendix that 
		\begin{equation}
			\lambda \int^a_b W^{(q+\lambda)}(a-y) \uU_{b,a}^{(q,\lambda)}(y) \dd y = \overline{W}^{(q,\lambda)}_{b}(a) - \uU_{b,a}^{(q,\lambda)}(a), \notag
		\end{equation}
		and second, using the above identity and Eq.~\eqref{eq:Tb+ExpectationEvaluation}, that
		\begin{equation}
			\E_b \Bigl(\ee^{-q T_{b,X}^{+}} \1_{\{T_{b,X}^{+} < \tau_{a,X}^+ \wedge \tau_{0,X}^-\}} \uU_{b,a}^{(q,\lambda)}(X_{T_{b,X}^{+}}) \Bigr) =  \frac{\overline{W}^{(q,\lambda)}_{b}(b)}{\overline{W}^{(q,\lambda)}_{b}(a)} \Bigl( \overline{W}^{(q,\lambda)}_{b}(a) - \uU_{b,a}^{(q,\lambda)}(a) \Bigl). \label{eq:Thm1Proof-ExpTb+Uba}
		\end{equation}
		Hence, the  second expectation of Eq.~\eqref{eq:Thm1-SMPIdentity3},  using Eqs  \eqref{eq:Thm1Proof-ExpTb+}--\eqref{eq:Thm1Proof-ExpTb+Uba}, turns out to be 
		\begin{align}
			\E^{}_b &\bigl( \ee^{-q T_{b,X}^{+}} \1_{\{T_{b,X}^{+} < \tau_{a,X}^+ \wedge \tau_{0,X}^-\}} \E^{}_{X_{T_{b,X}^{+}}} \bigl( \ee^{-q T_{b,Y}^{-}} \1_{\{T_{b,Y}^{-} < \tau_{a,Y}^+ \wedge \tau_{0,Y}^-\}} W^{(q)}(Y_{T_{b,Y}^{-}}) \bigr) \bigr)  \notag \\
			=& \; \E_b \bigl(\ee^{-q T_{b,X}^{+}} \1_{\{T_{b,X}^{+} < \tau_{a,X}^+ \wedge \tau_{0,X}^-\}} \uU_{b,a}^{(q,\lambda)}(X_{T_{b,X}^{+}}) \bigr)  - \frac{\mathcal{U}_{b, a}^{(q, \lambda)}(a)}{\mathcal{W}_{a}^{(q, \lambda)}(a)} 	\E^{}_b \bigl( \ee^{-q T_{b,X}^{+}} \1_{\{T_{b,X}^{+} < \tau_{a,X}^+ \wedge \tau_{0,X}^-\}} \Ww^{(q,\lambda)}_{X_{T_{b,X}^{+}}} (X_{T_{b,X}^{+}}) \bigr) \notag \\
			=& \;  \overline{W}_{b}^{(q,\lambda)}(b) -  \frac{\overline{W}_{b}^{(q,\lambda)}(b)}{\overline{W}_{b}^{(q,\lambda)}(a)}  \frac{\Ww_b^{(q,\lambda)}(a)}{\Ww_{a}^{(q,\lambda)}(a)} \uU_{b,a}^{(q,\lambda)}(a).
			\label{eq:Thm1-FundamentalBigEbIdentity} 
		\end{align}
		Then, by observing that $\overline{W}^{(q,\lambda)}_b(b) = W^{(q)}(b)$ and substituting the above equation and Eq.~\eqref{eq:Thm1Proof-ExpTb+} into Eq.~\eqref{eq:Thm1-SMPIdentity3}, we derive the desired quantity
		\begin{linenomath}
			\begin{equation} 
				p^{ X }(b) = \frac{ W^{(q)}(b)}{\uU^{(q,\lambda)}_{b,a}(a)}. \notag
			\end{equation}
		\end{linenomath}
		Finally, by substituting the above equation into Eq.~\eqref{eq:Thm1-SMPIdentity1}, we derive the result for $x \in [0,b]$. For $x \in (b,a]$,  we substitute $p^X(b)$ along with Eq.~\eqref{eq:Thm1Proof-ExpVb-} into Eq.~\eqref{eq:Thm1-SMPIdentity2} to get the required result.
		
	\end{proof}
	\begin{theorem} \label{Thm:ExitFromBelow}
		For $q, \lambda \geq 0$ and $0 \leq x, b \leq a$,
		\begin{equation}
			\mathbb{E}_x\left(\ee^{-q \tau_{0,U}^{-}} \mathbf{1}_{\left\{\tau_{0,U}^{-}<\tau_{a,U}^{+}\right\}}\right)= \mathcal{V}_{b, a}^{(q, \lambda)}(x) - \frac{\mathcal{U}_{b, a}^{(q, \lambda)}(x)}{\mathcal{U}_{b, a}^{(q, \lambda)}(a)} \mathcal{V}_{b, a}^{(q, \lambda)}(a)  , \label{ThmMain:ExitFromBelow}
		\end{equation}
		where $	\mathcal{U}_{b, a}^{(q, \lambda)}(x)$ is defined in Eq.~\eqref{eq:UScaleFunc} and
		\begin{equation} 
			\mathcal{V}_{b, a}^{(q, \lambda)}(x) = Z^{(q)}(x) - \1_{\{x > b\}}\Big(\mathcal{A}_{x}^{(q, \lambda)}(x) -\frac{\mathcal{W}_{x}^{(q, \lambda)}(x)}{\mathcal{W}_{b}^{(q, \lambda)}(a)}  \mathcal{A}_{b}^{(q, \lambda)}(a) \Big) \label{eq:VScaleFunc} .
		\end{equation}	
	\end{theorem}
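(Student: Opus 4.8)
The plan is to mirror the proof of Theorem~\ref{Thm:ExitFromAbove}, splitting according to the starting position relative to $b$. Write $v(x):=\E_x\bigl(\ee^{-q\tau_{0,U}^{-}}\1_{\{\tau_{0,U}^{-}<\tau_{a,U}^{+}\}}\bigr)$, which we denote by $v^X(x)$ for $x\in[0,b]$ (where $U$ runs with $X$-dynamics) and by $v^Y(x)$ for $x\in(b,a]$. For $x\in[0,b]$, since $\{U_t:t<\tau_{b,U}^{+}\}$ and $\{X_t:t<\tau_{b,X}^{+}\}$ have the same law under $\PP_x$ and $X$ creeps upward over $b$, conditioning on $\tau_{b,U}^{+}$ versus $\tau_{0,U}^{-}$ and invoking the classical two-sided exit identity gives
\[
v^X(x)=Z^{(q)}(x)+\frac{W^{(q)}(x)}{W^{(q)}(b)}\bigl(v^X(b)-Z^{(q)}(b)\bigr).
\]
For $x\in(b,a]$, $U$ runs as $Y$ until $T_{b,Y}^{-}$, so partitioning on which of $\tau_{0,Y}^{-}$, $\tau_{a,Y}^{+}$, $T_{b,Y}^{-}$ occurs first (the second contributing nothing to $v$), using Corollary~\ref{lem:PoissonPotentialsAndFluctuations}(iii) for the first, and the strong Markov property at $T_{b,Y}^{-}$ (where $U$ restarts at $Y_{T_{b,Y}^{-}}\le b$ with $X$-dynamics) for the third, yields
\[
v^Y(x)=\overline{\Z}^{(q+\lambda,-\lambda)}_b(x)-\frac{\overline{\W}^{(q,\lambda)}_{x-b}(x)}{\overline{\W}^{(q,\lambda)}_{a-b}(a)}\overline{\Z}^{(q+\lambda,-\lambda)}_b(a)+\E_x\Bigl(\ee^{-qT_{b,Y}^{-}}\1_{\{T_{b,Y}^{-}<\tau_{a,Y}^{+}\wedge\tau_{0,Y}^{-}\}}\,v^X\bigl(Y_{T_{b,Y}^{-}}\bigr)\Bigr).
\]

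\textbf{Reduction and solving for $v^X(b)$.} Substituting the formula for $v^X$ into the last display splits the nested expectation into $\E_x\bigl(\ee^{-qT_{b,Y}^{-}}\1\,Z^{(q)}(Y_{T_{b,Y}^{-}})\bigr)$ plus $\tfrac{v^X(b)-Z^{(q)}(b)}{W^{(q)}(b)}\,\E_x\bigl(\ee^{-qT_{b,Y}^{-}}\1\,W^{(q)}(Y_{T_{b,Y}^{-}})\bigr)$. The latter expectation was already computed in the proof of Theorem~\ref{Thm:ExitFromAbove} (Eq.~\eqref{eq:Thm1Proof-ExpVb-}) and equals $\mathcal{U}_{b,a}^{(q,\lambda)}(x)-\tfrac{\mathcal{W}_x^{(q,\lambda)}(x)}{\mathcal{W}_a^{(q,\lambda)}(a)}\mathcal{U}_{b,a}^{(q,\lambda)}(a)$; the former I would evaluate via Eq.~\eqref{eq:Vb-ExpectationEvaluation} with $f=Z^{(q)}$ and then rewrite it using the definitions \eqref{eq:Thm1-AlphaAuxiliaryFuncs} of $\alpha_b^{(q,\lambda)}$ and \eqref{eq:Thm1-AConvolutionAuxiliaryFuncs} of $\mathcal{A}^{(q,\lambda)}$, exactly paralleling the passage from $\gamma_b$/$\mathcal{G}$ to Eq.~\eqref{eq:Thm1Proof-ExpVb-}. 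This expresses $v^Y$ in terms of the single unknown $v^X(b)$. To pin down $v^X(b)$, I would re-run the partition starting from $x=b$, now conditioning on $T_{b,X}^{+}$ versus $\tau_{0,X}^{-},\tau_{a,X}^{+}$ as in Eq.~\eqref{eq:Thm1-SMPIdentity3}, using Corollary~\ref{lem:PoissonPotentialsAndFluctuations}(i),(iii) together with Eqs.~\eqref{eq:Tb+ExpectationEvaluation}, \eqref{eq:Thm1Proof-ExpTb+}--\eqref{eq:Thm1Proof-ExpTb+Uba}, their $\mathcal{A}$-analogues, and the appendix convolution identity \eqref{eq:ConvolutionOfUbaFunc} for $\mathcal{U}$ (plus the corresponding one for $\mathcal{V}$). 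The coefficient of $v^X(b)$ in the resulting linear equation is identical to the one yielding $p^X(b)=W^{(q)}(b)/\mathcal{U}_{b,a}^{(q,\lambda)}(a)$ in Theorem~\ref{Thm:ExitFromAbove}, so solving gives $v^X(b)=Z^{(q)}(b)-\tfrac{W^{(q)}(b)}{\mathcal{U}_{b,a}^{(q,\lambda)}(a)}\,\mathcal{V}_{b,a}^{(q,\lambda)}(a)$.

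\textbf{Conclusion.} Plugging $v^X(b)$ back into the $x\in[0,b]$ formula and using that $\mathcal{V}_{b,a}^{(q,\lambda)}\equiv Z^{(q)}$ and $\mathcal{U}_{b,a}^{(q,\lambda)}\equiv W^{(q)}$ on $[0,b]$ gives $v^X(x)=\mathcal{V}_{b,a}^{(q,\lambda)}(x)-\tfrac{\mathcal{U}_{b,a}^{(q,\lambda)}(x)}{\mathcal{U}_{b,a}^{(q,\lambda)}(a)}\mathcal{V}_{b,a}^{(q,\lambda)}(a)$; plugging $v^X(b)$ and the two evaluated expectations into the $x\in(b,a]$ formula and collecting terms according to the definition \eqref{eq:VScaleFunc} of $\mathcal{V}$ produces the same expression for $v^Y$. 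Hence \eqref{ThmMain:ExitFromBelow} holds for all $x\in[0,a]$.

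\textbf{Main obstacle.} The delicate point is the algebraic collapse in the middle step: checking that the various convolutions of $Z^{(q)}$, $\overline{\Z}^{(q+\lambda,-\lambda)}_b$, $\alpha_b^{(q,\lambda)}$ and $\mathcal{A}^{(q,\lambda)}$ against $W^{(q+\lambda)}$ and $\overline{\W}^{(q,\lambda)}$ reorganise precisely into $\mathcal{V}_{b,a}^{(q,\lambda)}$, and that the homogeneous part reproduces the factor $W^{(q)}(b)/\mathcal{U}_{b,a}^{(q,\lambda)}(a)$. This is where a convolution identity for $\mathcal{V}_{b,a}^{(q,\lambda)}$ analogous to \eqref{eq:ConvolutionOfUbaFunc} is required and where the bookkeeping is heaviest, though it runs entirely in parallel with the $\mathcal{U}$-computation already carried out for Theorem~\ref{Thm:ExitFromAbove}.
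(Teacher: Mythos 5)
Your proposal is correct and follows essentially the same route as the paper's own proof: the same split into $v^X$ on $[0,b]$ and $v^Y$ on $(b,a]$, the same use of Corollary \ref{lem:PoissonPotentialsAndFluctuations}(iii) and of the expectations \eqref{eq:Thm1Proof-ExpVb-}, \eqref{eq:Thm1Proof-ExpTb+} and \eqref{eq:Thm1-FundamentalBigEbIdentity} already computed for Theorem \ref{Thm:ExitFromAbove}, and the same linear equation yielding $v^X(b)=Z^{(q)}(b)-\tfrac{W^{(q)}(b)}{\mathcal{U}_{b,a}^{(q,\lambda)}(a)}\mathcal{V}_{b,a}^{(q,\lambda)}(a)$. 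The convolution identity for $\mathcal{V}_{b,a}^{(q,\lambda)}$ that you flag as the main outstanding bookkeeping is exactly Eq.~\eqref{eq:ConvolutionOfVbaFunc}, which the paper supplies and proves in the Appendix.
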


	\begin{proof}
		Using a similar notation as in the proof of Eq.~\eqref{ThmMain:ExitFromAbove}, let $ \E^{}_x \big( \ee^{-q \tau_{0,U}^- } \1_{\{\tau_{0,U}^- < \tau_{a,U}^+\}}\big)$ be denoted by $g^X(x)$ for $x \in [0,b]$, and $g^Y(x)$ for $x \in (b,a]$.
		
		Now, suppose that $x \in [0,b]$. Then, conditioning on $\tau_{b,U}^+ $, using the strong Markov property  and Eqs.~\eqref{eq:ClassicalExitfromAbove} -- \eqref{eq:ClassicalExitfromBelow} of the Appendix, 
		\begin{align}
			g^{ X }(x) 
			=& \; \E^{}_x \bigl( \ee^{-q \tau_{0,U}^- } \1_{\{\tau_{0,U}^- < \tau_{b,U}^+\}} \bigr) + \E^{}_x \bigl( \ee^{-q \tau_{b,U}^+ } \1_{\{\tau_{b,U}^+ < \tau_{0,U}^-\}} \E^{}_{U_{\tau_{b,U}^+}} \bigl( \ee^{-q \tau_{0,U}^- } \1_{\{\tau_{0,U}^- < \tau_{a,U}^+\}}\bigr) \bigr) \notag \\
			=& \; \E^{}_x \bigl( \ee^{-q \tau_{0,X}^- } \1_{\{\tau_{0,X}^- < \tau_{b,X}^+\}} \bigr) + \E^{}_x \bigl( \ee^{-q \tau_{b,X}^+ } \1_{\{\tau_{b,X}^+ < \tau_{0,X}^-\}} \bigr) \E^{}_b \bigl( \ee^{-q \tau_{0,U}^- } \1_{\{\tau_{0,U}^- < \tau_{a,U}^+\}}\bigr) \notag \\
			=& \; Z^{(q)}(x) + \frac{W^{(q)}(x)}{W^{(q)}(b)} \bigl(g^X(b) - Z^{(q)}(b) \bigr), \label{eq:Thm2-SMPIdentity1}
		\end{align}
		where the second equality follows since $\{X_t, t < T_{b,X}^+\}$ and $\{U_t, t < T_{b,U}^+\}$ have the same distribution w.r.t. $\PP_x$ when $x \in [0,b]$, and by recalling that $\tau_{b,X}^+ \leq T_{b,X}^+$ and $\tau_{b,U}^+ \leq T_{b,U}^+$.
		
		Similarly, suppose the process starts at $x \in (b,a]$. Then, by observing that $\{Y_t, t < T_{b,Y}^-\}$ and $\{U_t, t < T_{b,U}^-\}$ have the same distribution w.r.t. $\PP_x$ for these $x$-values, we condition on $T_{b,U}^-$ and use the strong Markov property  to obtain
		\begin{align}
			g^{ Y }(x) &= \; \E^{}_x \left( \ee^{-q \tau_{0,U}^-} \1_{\{\tau_{0,U}^- < T_{b,U}^{-} \wedge \tau_{a,U}^+  \}} \right) + \E^{}_x \left( \ee^{-q \tau_{0,U}^-} \1_{\{T_{b,U}^{-} < \tau_{0,U}^- < \tau_{a,U}^+ \}} \right) \notag \\
			&= \; \E^{}_x \left( \ee^{-q \tau_{0,Y}^-} \1_{\{\tau_{0,Y}^- < T_{b,Y}^{-} \wedge \tau_{a,Y}^+ \}} \right) + \E^{}_x \left( \ee^{-q T_{b,Y}^{-}} \1_{\{T_{b,Y}^{-} < \tau_{a,Y}^+ \wedge \tau_{0,Y}^-\}} g^{ X }(Y_{T_{b,Y}^{-}}) \right) \notag \\
			&= \overline{\Z}^{(q+\lambda,-\lambda)}_b(x) -  \frac{\Ww^{(q,\lambda)}_{x}(x)}{\Ww^{(q,\lambda)}_{a}(a)} \overline{\Z}^{(q+\lambda,-\lambda)}_b(a) + \E^{}_x \left( \ee^{-q T_{b,Y}^{-}} \1_{\{T_{b,Y}^{-} < \tau_{a,Y}^+ \wedge \tau_{0,Y}^-\}} Z^{(q)}(Y_{T_{b,Y}^{-}}) \right) \notag \\
			& \quad  + \frac{(g^{ X }(b) - Z^{(q)}(b))}{W^{(q)}(b)} \E^{}_x \left( \ee^{-q T_{b,Y}^{-}} \1_{\{T_{b,Y}^{-} < \tau_{a,Y}^+ \wedge \tau_{0,Y}^-\}} W^{(q)}(Y_{T_{b,Y}^{-}}) \right), \label{eq:Thm2-SMPIdentity2A}
		\end{align}
		where the last line follows by using Corollary \ref{lem:PoissonPotentialsAndFluctuations} (iii) and Eq.~\eqref{eq:Thm2-SMPIdentity1}. Additionally, by using Eqs.~\eqref{eq:Vb-ExpectationEvaluation} and \eqref{eq:Thm1-AConvolutionAuxiliaryFuncs}, we observe that
		\begin{align}
		\E^{}_x &\left( \ee^{-q T_{b,Y}^{-}} \1_{\{T_{b,Y}^{-} < \tau_{a,Y}^+ \wedge \tau_{0,Y}^-\}} Z^{(q)}(Y_{T_{b,Y}^{-}}) \right) \notag \\
			=& \; \frac{\Ww_{x}^{(q,\lambda)}(x)}{\Ww_{a}^{(q,\lambda)}(a)} \Bigl( \Aa^{(q,\lambda)}_a(a) - Z^{(q)}(a) + \overline{\Z}_b^{(q+\lambda,-\lambda)}(a) \Bigr) - \Bigl( \Aa^{(q,\lambda)}_x(x) - Z^{(q)}(x) + \overline{\Z}_b^{(q+\lambda,-\lambda)}(x) \Bigr) \notag \\
			=& \; \mathcal{V}_{b, a}^{(q, \lambda)}(x) - \overline{\Z}_b^{(q+\lambda,-\lambda)}(x) - \frac{\Ww_{x}^{(q,\lambda)}(x)}{\Ww_{a}^{(q,\lambda)}(a)} \Bigl( \mathcal{V}_{b, a}^{(q, \lambda)}(a) - \overline{\Z}_b^{(q+\lambda,-\lambda)}(a) \Bigr),  \label{eq:EYxVB-ofZ(q)}
		\end{align}
		where the last equality holds by using Eq.~\eqref{eq:VScaleFunc}, and hence substituting the above equation into Eq.~\eqref{eq:Thm2-SMPIdentity2A} yields
		\begin{align}
			g^{ Y }(x) &= \mathcal{V}_{b, a}^{(q, \lambda)}(x) - \frac{\Ww_{x}^{(q,\lambda)}(x)}{\Ww_{a}^{(q,\lambda)}(a)}\mathcal{V}_{b, a}^{(q, \lambda)}(a) + \frac{(g^{ X }(b) - Z^{(q)}(b))}{W^{(q)}(b)} \E^{}_x \left( \ee^{-q T_{b,Y}^{-}} \1_{\{T_{b,Y}^{-} < \tau_{a,Y}^+ \wedge \tau_{0,Y}^-\}} W^{(q)}(Y_{T_{b,Y}^{-}}) \right), \label{eq:Thm2-SMPIdentity2}
		\end{align}
		From Eqs.~\eqref{eq:Thm2-SMPIdentity1} and \eqref{eq:Thm2-SMPIdentity2}, it suffices to derive $g^X(b)$. To do this, we condition on $T_{b,U}^+$, use Corollary \ref{lem:PoissonPotentialsAndFluctuations} (iii) and the strong Markov property to get 
		\begin{align}
			g^{ X }(b) =& \; \E^{}_b \left( \ee^{-q \tau_{0,U}^-} \1_{\{\tau_{0,U}^- < T_{b,U}^{+} \wedge \tau_{a,U}^+ \}} \right) + \E^{}_b \left( \ee^{-q \tau_{a,U}^+} \1_{\{T_{b,U}^{+} < \tau_{a,U}^+ < \tau_{0,U}^- \}} \right)\notag  \\
			=& \; \overline{Z}_b^{(q,\lambda)}(b) - \frac{\overline{W}_b^{(q,\lambda)}(b)}{\overline{W}^{(q,\lambda)}_{b}(a)}\overline{Z}^{(q,\lambda)}_{b}(a) + \E^{}_b \bigl( \ee^{-q T_{b,X}^{+}} \1_{\{T_{b,X}^{+} < \tau_{a,X}^+ \wedge \tau_{0,X}^-\}} g^{ Y }(X_{T_{b,X}^{+}})  \bigr). \notag
		\end{align}
		Substituting Eq.~\eqref{eq:Thm2-SMPIdentity2} into the expectation of the above equation, we get 
		\begin{align}
			g^{ X }(b)=& \; \overline{Z}^{(q,\lambda)}_b(b) - \frac{\overline{W}_b^{(q,\lambda)}(b)}{\overline{W}^{(q,\lambda)}_{b}(a)}\overline{Z}^{(q,\lambda)}_{b}(a) + \E^{}_b \bigl( \ee^{-q T_{b,X}^{+}} \1_{\{T_{b,X}^{+} < \tau_{a,X}^+ \wedge \tau_{0,X}^-\}} \vV^{(q,\lambda)}_{b,a}(X_{T_{b,X}^{+}}) \bigr)\notag  \\
			& - \frac{\vV^{(q,\lambda)}_{b,a}(a) }{\Ww^{(q,\lambda)}_{a}(a)} \E^{ }_b \bigl( \ee^{-q T_{b,X}^{+}} \1_{\{T_{b,X}^{+} < \tau_{a,X}^+ \wedge \tau_{0,X}^-\}} \Ww^{(q,\lambda)}_{X_{T_{b,X}^{+}}}(X_{T_{b,X}^{+}}) \bigr) \notag \\
			&+ \frac{(g^{ X }(b) - Z^{(q)}(b))}{W^{(q)}(b)}  \E^{ }_b \bigl( \ee^{-q T_{b,X}^{+}} \1_{\{T_{b,X}^{+} < \tau_{a,X}^+ \wedge \tau_{0,X}^-\}} \E^{}_{X_{T_{b,X}^{+}}} \color{black} \bigl( \ee^{-q T_{b,Y}^{-}} \1_{\{T_{b,Y}^{-} < \tau_{a,Y}^+ \wedge \tau_{0,Y}^-\}} W^{(q)}(Y_{T_{b,Y}^{-}})  \bigr) \bigr). \label{eq:Thm2-SMPIdentity3}
		\end{align} 
		We now need to compute only the first expectation of the above equation since the second and third expectations are known from Eqs.~\eqref{eq:Thm1Proof-ExpTb+} and \eqref{eq:Thm1-FundamentalBigEbIdentity}, respectively. Thus, by noticing from Eq.~\eqref{eq:ConvolutionOfVbaFunc} of the Appendix that
		\begin{equation}
			\lambda \int^a_b W^{(q+\lambda)}(a-y)     \vV_{b,a}^{(q,\lambda)}(y) \dd y = \overline{Z}^{(q,\lambda)}_{b}(a) - \vV_{b,a}^{(q,\lambda)}(a), \notag
		\end{equation}
		we have by using Eq.~\eqref{eq:Tb+ExpectationEvaluation} along with the above equation that
		\begin{equation}
			\E^{}_b \bigl( \ee^{-q T_{b,X}^{+}} \1_{\{T_{b,X}^{+} < \tau_{a,X}^+ \wedge \tau_{0,X}^-\}} \vV^{(q,\lambda)}_{b,a}(X_{T_{b,X}^{+}}) \bigr) =  \frac{\overline{W}^{(q,\lambda)}_{b}(b)}{\overline{W}^{(q,\lambda)}_{b}(a)} \Bigl( \overline{Z}^{(q,\lambda)}_{b}(a) - \vV_{b,a}^{(q,\lambda)}(a) \Bigl). \notag 
		\end{equation}
		Then, by observing that $\overline{Z}^{(q,\lambda)}_b(b) = Z^{(q)}(b)$ and $\overline{W}^{(q,\lambda)}_b(b) = W^{(q)}(b)$, we substitute Eqs.~\eqref{eq:Thm1Proof-ExpTb+}, \eqref{eq:Thm1-FundamentalBigEbIdentity} and the above equation into Eq.~\eqref{eq:Thm2-SMPIdentity3} to derive the desired quantity
		\begin{linenomath}
			\begin{equation} 
				g^{ X }(b) = Z^{(q)}(b) -  \frac{ W^{(q)}(b)}{\uU^{(q,\lambda)}_{b,a}(a)} \vV^{(q,\lambda)}_{b,a}(a). \notag
			\end{equation}
		\end{linenomath}
		Finally, by substituting the above equation into Eq.~\eqref{eq:Thm2-SMPIdentity1}, we derive the result for $x \in [0,b]$. For $x \in (b,a]$,  we substitute $g^X(b)$ along with Eq.~\eqref{eq:Thm1Proof-ExpVb-} into Eq.~\eqref{eq:Thm2-SMPIdentity2} to get the required result.
		%
		
	\end{proof}
	
	\begin{remark}\upshape{
		Let us assume that $X = Y$. Then, this assumption implies for the identities given in Proposition 2.1.~of \cite{WLL2023} that their refraction parameter $\delta = 0$, $\W^{(q)} = W^{(q)}$ and $\Z^{(q)} = Z^{(q)}$ which consequently yields 
		\begin{align*}
			\lambda \int^{b}_0 \overline{W}^{(q,\lambda)}_{x-b}(x-y)W^{(q)}(y) \dd y =& \; \overline{W}^{(q,\lambda)}_{x-b}(x) - W^{(q)}(x), \\
			\lambda \int^{b}_0 \overline{W}^{(q,\lambda)}_{x-b}(x-y)Z^{(q)}(y) \dd y =& \; \overline{Z}^{(q+\lambda,-\lambda)}_{b}(x) - Z^{(q)}(x).
		\end{align*}
    By the above two identities, $\gamma_b^{(q,\lambda)}(x) = \alpha_b^{(q,\lambda)}(x) = 0$ and therefore $\Gg_b^{(q,\lambda)}(x) = \Aa_b^{(q,\lambda)}(x) = 0$. Hence, we conclude that the case for $Y = X$ gives $\uU^{(q,\lambda)}_{b,a} = W^{(q)}(x)$ and $\vV^{(q,\lambda)}_{b,a} = Z^{(q)}(x)$ which reduces  Eqs.~\eqref{ThmMain:ExitFromAbove} and \eqref{ThmMain:ExitFromBelow} to those in Theorem \ref{Thm:ClassicalFluctuationIdentities} of the Appendix, the classical one-sided L\'evy fluctuation identities.}
	\end{remark}

\subsection{One sided exit upwards and downwards}

To derive the one-sided exit identities, we require the following lemma to determine the limits of the scale functions derived in Section \ref{Subsec:Two-Sided Exit}.

 \begin{lemma} \label{lem:LimitIdentities}
Let $q,\lambda > 0$, $a \in \R_+$, and $x,b \in [0,a]$. Then, for at least $\Phi_{q} > \varphi_{q+\lambda}$, the following limits are true:
		\begin{enumerate}
			\item[\upshape{(i)}] $\lim\limits_{a \rightarrow \infty} \W^{(q)}(a)/W^{(q)}(a) = 0$,
			\item[\upshape{(ii)}] $\lim\limits_{a \rightarrow \infty} \overline{\W}_{a-b}^{(q,\lambda)}(a)/W^{(q+\lambda)}(a) = 0$.
			\item[\upshape{(iii)}] $\lim\limits_{\theta \rightarrow \infty} \overline{\W}_{x-b}^{(q,\lambda)}(x+\theta)/W^{(q)}(\theta) = 0$.
		\end{enumerate}
	\end{lemma}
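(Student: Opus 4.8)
The plan is to reduce each of the three ratios to a comparison of exponential growth rates of \emph{ordinary} scale functions, for which the classical asymptotics apply. The inputs I would invoke are, first, the standard limits (see \cite[Chapter 8]{K2014}): since $q>0$ and $q+\lambda>0$, the quantities $\psi'(\Phi_q),\psi'(\Phi_{q+\lambda}),(\psi^{*})'(\varphi_q),(\psi^{*})'(\varphi_{q+\lambda})$ are finite and strictly positive, and
\[
e^{-\Phi_p a}W^{(p)}(a)\longrightarrow\frac1{\psi'(\Phi_p)},\qquad e^{-\varphi_p a}\W^{(p)}(a)\longrightarrow\frac1{(\psi^{*})'(\varphi_p)}\qquad(a\to\infty)
\]
for $p\in\{q,q+\lambda\}$; and second, the strict monotonicity of $p\mapsto\Phi_p$ and $p\mapsto\varphi_p$ on $[0,\infty)$. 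Combined with the standing hypothesis $\Phi_q>\varphi_{q+\lambda}$ and $\lambda>0$, these yield the chain $\varphi_q<\varphi_{q+\lambda}<\Phi_q<\Phi_{q+\lambda}$, and each of (i)--(iii) then amounts to reading off the appropriate comparison.

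For (i) I would write $\W^{(q)}(a)/W^{(q)}(a)=\bigl(e^{-\varphi_q a}\W^{(q)}(a)\bigr)\bigl(e^{-\Phi_q a}W^{(q)}(a)\bigr)^{-1}e^{-(\Phi_q-\varphi_q)a}$; the first two factors converge to $\psi'(\Phi_q)/(\psi^{*})'(\varphi_q)\in(0,\infty)$ and $e^{-(\Phi_q-\varphi_q)a}\to0$ since $\varphi_q<\Phi_q$. For (ii) and (iii) the first move is to eliminate the second--generation scale function using the first representation in \eqref{eq:SecondGenScaleFunc1}: for $u\le z$, using $\lambda>0$, $W^{(\cdot)},\W^{(\cdot)}\ge0$ together with $\W^{(q)}\equiv0$ on $(-\infty,0)$ (the latter also handling the case $x<b$, where the lower integration limit is negative and the integral vanishes),
\[
\overline{\W}^{(q,\lambda)}_{u}(z)=\W^{(q+\lambda)}(z)-\lambda\!\int_0^{u}\W^{(q+\lambda)}(z-w)\,\W^{(q)}(w)\,\dd w\le\W^{(q+\lambda)}(z).
\]
Applying this with $(u,z)=(a-b,a)$ bounds the ratio in (ii) by $\W^{(q+\lambda)}(a)/W^{(q+\lambda)}(a)$, which tends to $0$ by the argument of (i) with $q$ replaced by $q+\lambda$ (using $\varphi_{q+\lambda}<\Phi_{q+\lambda}$). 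Applying it with $(u,z)=(x-b,x+\theta)$ bounds the ratio in (iii) by $\W^{(q+\lambda)}(x+\theta)/W^{(q)}(\theta)$, and writing this as $\bigl(e^{-\varphi_{q+\lambda}(x+\theta)}\W^{(q+\lambda)}(x+\theta)\bigr)\bigl(e^{-\Phi_q\theta}W^{(q)}(\theta)\bigr)^{-1}e^{\varphi_{q+\lambda}x}\,e^{(\varphi_{q+\lambda}-\Phi_q)\theta}$ shows it vanishes as $\theta\to\infty$, since $\varphi_{q+\lambda}<\Phi_q$.

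I do not expect a substantive obstacle: the computations are routine once the asymptotics are in hand. The points needing care are checking that the limiting constants are finite and strictly positive (which is precisely where $q,\lambda>0$ enters), handling the edge case $x<b$ in (iii) via the vanishing of $\W^{(q)}$ on the negative half-line, and the exponent bookkeeping --- whose only non-trivial ingredient is the hypothesis $\Phi_q>\varphi_{q+\lambda}$, which, propagated through the chain above, is exactly what makes each numerator grow strictly slower than its denominator. If one prefers to avoid the explicit constants, the cruder statements $a^{-1}\log W^{(p)}(a)\to\Phi_p$ and $a^{-1}\log\W^{(p)}(a)\to\varphi_p$ suffice for all three limits.
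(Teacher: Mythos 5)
Your proof is correct, and for parts (ii) and (iii) it takes a genuinely different (and leaner) route than the paper. Part (i) is essentially the paper's argument: both rest on the exponential change-of-measure asymptotics $e^{-\Phi_q a}W^{(q)}(a)\to 1/\psi'(\Phi_q)$, $e^{-\varphi_q a}\W^{(q)}(a)\to 1/(\psi^{*})'(\varphi_q)$ and the inequality $\varphi_q<\Phi_q$ extracted from the hypothesis. For (ii) and (iii), however, the paper expands $\overline{\W}^{(q,\lambda)}$ via the second representation in Eq.~\eqref{eq:SecondGenScaleFunc1}, establishes the auxiliary limits $\W^{(q)}(a)/W^{(q+\lambda)}(a)\to0$ resp.\ $\W^{(q+\lambda)}(x+\theta)/W^{(q)}(\theta)\to0$, and then controls the convolution term by a uniform bound plus the dominated convergence theorem; you instead use the first representation and discard the nonnegative subtracted term to get the monotone bound $\overline{\W}^{(q,\lambda)}_{u}(z)\le\W^{(q+\lambda)}(z)$ (with equality when $u\le0$, which indeed covers $x\le b$), reducing everything to a single comparison of ordinary scale functions via the chain $\varphi_q<\varphi_{q+\lambda}<\Phi_q<\Phi_{q+\lambda}$. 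This buys a shorter argument with no domination step; the paper's decomposition is the one that also yields the explicit limits such as Eq.~\eqref{eq:LimitofWbarTheta} needed later, but for the present lemma only the vanishing is required, so your squeeze suffices. One hair-thin point to make explicit: the squeeze needs the lower bound $\overline{\W}^{(q,\lambda)}_{u}(z)\ge0$, which is immediate from the second representation in Eq.~\eqref{eq:SecondGenScaleFunc1} but should be stated alongside your upper bound.
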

	
	\begin{proof}
		\upshape{(i)} Recall from \cite[Chapter 8]{K2014} that there exists a representation of the scale function for $q, x \geq 0$ such that 
		\begin{linenomath}
			$$
			W^{(q)}(x)=\mathrm{e}^{\Phi_q x} W_{\Phi_q}(x),
			$$
		\end{linenomath}
		where $W_{\Phi_q}(x)$ is the 0-scale function of the SNLP with Laplace exponent $\psi_{\Phi_q}(\theta):=\psi(\Phi_q+\theta)-q$. Furthermore, it is known (see for instance \cite{KK2012}) that
		\begin{linenomath}
			$$
			W_{\Phi_q}(\infty):=\lim _{x \rightarrow \infty} W_{\Phi_q}(x)=\frac{1}{\psi_{\Phi_q}^{\prime}(0+)}=\frac{1}{\psi^{\prime}(\Phi_q)},
			$$
		\end{linenomath}
		which implies that $W_{\Phi_q}(\infty)<\infty$ except if simultaneously $q=0$ and $\psi^{\prime}(0+)=0$. The same holds for the 0-scale function $\W_{\varphi_q}(x)$ having Laplace exponent $\psi^*_{\varphi_q}(\theta):=\psi^*(\varphi_q+\theta)-q$. Therefore, by noticing that $\Phi_q > \varphi_{q+\lambda}$ implies also that $\Phi_q > \varphi_q$, we have 
		\begin{linenomath}
			\begin{equation*}
				\lim\limits_{a \rightarrow \infty} \frac{\W^{(q)}(a)}{W^{(q)}(a)} = \lim\limits_{a \rightarrow \infty} e^{-(\Phi_q - \varphi_q)a} \frac{\W_{\varphi_q}(a)}{W_{\Phi_q}(a)} = 0.
			\end{equation*}
		\end{linenomath}
		\\ 
		\upshape{(ii)} From Eq.~\eqref{eq:SecondGenScaleFunc1}, we have that
		\begin{linenomath}
			\begin{equation*}
				\overline{\W}^{(q,\lambda)}_{a-b}(a) = \W^{(q)}(a) + \lambda \int^b_0 \W^{(q)}(a-y)\W^{(q+\lambda)}(y) \dd y,
			\end{equation*}
		\end{linenomath}
		Then, by observing that our assumption implies $\Phi_{q+\lambda} > \varphi_q$ and using a similar reasoning as for \upshape{(i)}, we get that
		\begin{linenomath}
			\begin{equation*}
				\lim\limits_{a \rightarrow \infty} \frac{\W^{(q)}(a)}{W^{(q+\lambda)}(a)} = \lim\limits_{a \rightarrow \infty} \ee^{-(\Phi_{q+\lambda} - \varphi_q)a} \frac{\W_{\varphi_q}(a)}{W_{\Phi_{q+\lambda}}(a)} = 0.
			\end{equation*}
		\end{linenomath}
		Therefore, since $W_{\Phi_{q+\lambda}}$ and $W_{\varphi_{q}}$ are continuous and bounded, and since $\ee^{-(\Phi_{q+\lambda} - \varphi_q)a}$ decreases as $a \rightarrow \infty$,  there exists some $C \in \R_+$ such that ~$\frac{\W^{(q)}(a)}{W^{(q+\lambda)}(a)} \leq C$ for all $a>0$. The required limit is thus derived by applying the dominated convergence theorem.\\
		\\
		\upshape{(iii)} The proof follows the same idea as that of \upshape{(ii)} by first noticing that the assumption $\Phi_q > \varphi_{q+\lambda}$ implies that
		\begin{equation*}
			\lim\limits_{\theta \rightarrow \infty} \frac{\W^{(q+\lambda)}(x+\theta)}{W^{(q)}(\theta)} = \lim\limits_{\theta \rightarrow \infty} \ee^{-(\Phi_{q} - \varphi_{q+\lambda})\theta} \frac{\ee^{\varphi_{q+\lambda}x}\W_{\varphi_{q+\lambda}}(x+\theta)}{W_{\Phi_{q}}(\theta)} = 0,
		\end{equation*}
		and then by using the dominated convergence theorem. 
	\end{proof}

 We now derive the one-sided exit identities

 \begin{proposition}  \label{label:Prop-OneSidedExitAbove}
		Let $0 < q,\lambda < \infty$. Then, for $0 \leq x,b \leq a$ and at least $\Phi_{q} > \varphi_{q+\lambda}$, we have
		\begin{linenomath}
			\begin{equation}
				\E_x\left( \ee^{-q \tau_{a,U}^+} \1_{\{ \tau_{a,U}^+ < \infty\}} \right) =  \frac{{\uU}_{b,a}^{(q,\lambda)\downarrow}(x)}{{\uU}_{b,a}^{(q,\lambda)\downarrow}(a)}, \notag
			\end{equation}
		\end{linenomath}
		where
  \begin{equation}
    \begin{aligned}
			\uU_{b,a}^{(q,\lambda)\downarrow}(x) =& \; \ee^{\Phi_q x}
			- \1_{\{x > b\}} \Bigl( \gamma_{b}^{(q,\lambda)\downarrow}(x) \\
			&- {\Z}^{(q)}(x-b;\varphi_{q+\lambda}) \frac{\gamma_{b}^{(q,\lambda)\downarrow}(a) + \lambda\int^{a}_b W^{(q+\lambda)}(a-u) \gamma_{b}^{(q,\lambda)\downarrow}(u) \dd u }{\Z^{(q)}(a-b;\varphi_{q+\lambda})+ \lambda \int^{a}_b W^{(q+\lambda)}(a-u) {\Z}^{(q)}(u-b;\varphi_{q+\lambda}) \dd u} \Bigr),\label{eq:LimitDown-Uba(x)}\\
		\end{aligned}
    \end{equation}
  
		and
		\begin{equation}
			\gamma_{b}^{(q,\lambda)\downarrow}(x) = \ee^{\Phi_q x} + \lambda \int_0^{\infty} \ee^{\Phi_q (b-y)}\overline{\mathbb{W}}_{x-b}^{(q,\lambda)}(x-b+y)\mathrm{d} y. \label{eq:LimitDown-GammaAuxiliaryFuncs}
		\end{equation}
	\end{proposition}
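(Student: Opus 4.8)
The plan is to recover this one-sided identity from the two-sided identity of Theorem~\ref{Thm:ExitFromAbove} by pushing the lower barrier down to $-\infty$. Since translating a spectrally negative L\'evy process by a constant changes neither its Laplace exponent nor its scale functions, applying Theorem~\ref{Thm:ExitFromAbove} to the shifted process $U+c$ (whose switching level is then $b+c$) gives, for every $c\ge 0$,
\[
\E_x\bigl(\ee^{-q\tau_{a,U}^{+}}\1_{\{\tau_{a,U}^{+}<\tau_{-c,U}^{-}\}}\bigr)=\frac{\uU_{b+c,\,a+c}^{(q,\lambda)}(x+c)}{\uU_{b+c,\,a+c}^{(q,\lambda)}(a+c)}.
\]
Because the paths of $U$ are right-continuous with left limits, hence locally bounded, $\tau_{-c,U}^{-}\uparrow\infty$ a.s.\ as $c\to\infty$, so the events $\{\tau_{a,U}^{+}<\tau_{-c,U}^{-}\}$ increase to $\{\tau_{a,U}^{+}<\infty\}$ and monotone convergence makes the left-hand side converge to $\E_x(\ee^{-q\tau_{a,U}^{+}}\1_{\{\tau_{a,U}^{+}<\infty\}})$. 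It thus remains to compute $\lim_{c\to\infty}\uU_{b+c,\,a+c}^{(q,\lambda)}(x+c)\big/\uU_{b+c,\,a+c}^{(q,\lambda)}(a+c)$.

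Next I would divide numerator and denominator by $W^{(q)}(c)$ and take the limit term by term in~\eqref{eq:UScaleFunc}. Writing $W^{(q)}=\ee^{\Phi_q\cdot}W_{\Phi_q}$ with $W_{\Phi_q}(\infty)\in(0,\infty)$ (which holds since $q>0$), or invoking~\eqref{eq:LimitofRatioofScaleFuncs}, one has $W^{(q)}(x+c)/W^{(q)}(c)\to\ee^{\Phi_q x}$. Combining this with Lemma~\ref{lem:LimitIdentities}(iii) and the change of variable $y=b+c-w$ in the integral of~\eqref{eq:Thm1-GammaAuxiliaryFuncs} gives $\gamma_{b+c}^{(q,\lambda)}(x+c)/W^{(q)}(c)\to\gamma_b^{(q,\lambda)\downarrow}(x)$, the function in~\eqref{eq:LimitDown-GammaAuxiliaryFuncs}; applying the same computation to $\mathcal{G}_{b+c}^{(q,\lambda)}(a+c)$ from~\eqref{eq:Thm1-GConvolutionAuxiliaryFuncs} (after the substitution $y=v+c$ in its convolution over $[b+c,a+c]$) produces the numerator $\gamma_b^{(q,\lambda)\downarrow}(a)+\lambda\int^{a}_{b}W^{(q+\lambda)}(a-u)\gamma_b^{(q,\lambda)\downarrow}(u)\,\dd u$ appearing in~\eqref{eq:LimitDown-Uba(x)}.

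The one place where dividing by $W^{(q)}(c)$ produces a $0/0$ form is the ratio $\mathcal{W}_{x+c}^{(q,\lambda)}(x+c)/\mathcal{W}_{b+c}^{(q,\lambda)}(a+c)$, equal to $\overline{\W}_{x-b}^{(q,\lambda)}(x+c)/\mathcal{W}_{b+c}^{(q,\lambda)}(a+c)$; there I would normalise numerator and denominator by $\W^{(q+\lambda)}(c)$ instead. Expanding $\overline{\W}_{x-b}^{(q,\lambda)}(x+c)$ via~\eqref{eq:SecondGenScaleFunc1} for $Y$ and using $\W^{(q+\lambda)}(x+c)/\W^{(q+\lambda)}(c)\to\ee^{\varphi_{q+\lambda}x}$ together with $\psi_q^{*}(\varphi_{q+\lambda})=\lambda$ gives $\overline{\W}_{x-b}^{(q,\lambda)}(x+c)/\W^{(q+\lambda)}(c)\to\ee^{\varphi_{q+\lambda}b}\,\Z^{(q)}(x-b;\varphi_{q+\lambda})$, and likewise for the convolution term inside $\mathcal{W}_{b+c}^{(q,\lambda)}(a+c)$; the common factor $\ee^{\varphi_{q+\lambda}b}$ cancels, so the ratio converges to $\Z^{(q)}(x-b;\varphi_{q+\lambda})$ divided by $\Z^{(q)}(a-b;\varphi_{q+\lambda})+\lambda\int^{a}_{b}W^{(q+\lambda)}(a-u)\Z^{(q)}(u-b;\varphi_{q+\lambda})\,\dd u$, which is precisely the coefficient in~\eqref{eq:LimitDown-Uba(x)}. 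Reassembling the pieces, the normalised numerator tends to $\uU_{b,a}^{(q,\lambda)\downarrow}(x)$ and the normalised denominator to $\uU_{b,a}^{(q,\lambda)\downarrow}(a)$, which yields the result; the case $x\le b$ is immediate, since then $\1_{\{x>b\}}=0$ and $\uU_{b+c,\,a+c}^{(q,\lambda)}(x+c)=W^{(q)}(x+c)$.

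The step I expect to be the main obstacle is the justification of interchanging limit and integral. For the convolutions over the fixed compact interval $[b,a]$ this is routine, by continuity and monotonicity of the scale functions. The delicate case is the integral $\lambda\int_0^{b+c}\overline{\W}_{x-b}^{(q,\lambda)}(x-b+w)\,\bigl(W^{(q)}(b+c-w)/W^{(q)}(c)\bigr)\,\dd w$, whose domain grows with $c$: using the exponential representations of the two scale functions, $W^{(q)}(b+c-w)/W^{(q)}(c)$ is bounded uniformly in $c$ by a constant multiple of $\ee^{\Phi_q(b-w)}$, while $\overline{\W}_{x-b}^{(q,\lambda)}(x-b+w)$ grows at most like $\ee^{\varphi_{q+\lambda}w}$, so the integrand is dominated by a constant multiple of $\ee^{-(\Phi_q-\varphi_{q+\lambda})w}$, which is integrable on $(0,\infty)$ exactly because $\Phi_q>\varphi_{q+\lambda}$; dominated convergence then applies. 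This same inequality is what powers Lemma~\ref{lem:LimitIdentities} and the auxiliary limit $\W^{(q+\lambda)}(c)/W^{(q)}(c)\to 0$ used above, which is why $\Phi_q>\varphi_{q+\lambda}$ is imposed throughout.
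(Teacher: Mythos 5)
Your proposal is correct and follows essentially the same route as the paper: a level-shift (level invariance) reduction to Theorem \ref{Thm:ExitFromAbove}, followed by normalising the numerator and denominator by $W^{(q)}(\theta)$ (and the ratio term by $\W^{(q+\lambda)}(\theta)$) and computing the termwise limits via Eq.~\eqref{eq:LimitofRatioofScaleFuncs}, Lemma \ref{lem:LimitIdentities}(iii) and dominated convergence, exactly as in the paper's proof. Your explicit monotone-convergence justification of the level-invariance step and the $\ee^{-(\Phi_q-\varphi_{q+\lambda})w}$ domination bound simply spell out details the paper leaves implicit.
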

	
	\begin{proof}
		Using a level invariance argument and Theorem \ref{Thm:ExitFromAbove}, 
		\begin{equation}
			\E_x\left( \ee^{-q \tau_{a,U}^+} \1_{\{ \tau_{a,U}^+ < \infty\}} \right) = \lim\limits_{\theta \rightarrow \infty} \frac{\uU^{(q,\lambda)}_{b+\theta,a+\theta}(x+\theta)}{\uU^{(q,\lambda)}_{b+\theta,a+\theta}(a+\theta)}, \notag
		\end{equation}
		and so we derive the above limit. Hence, we notice that 
		\begin{equation}
			\lim\limits_{\theta \rightarrow \infty}\frac{\uU^{(q,\lambda)}_{b+\theta,a+\theta}(x+\theta)}{W^{(q)}(\theta)} = \lim\limits_{\theta \rightarrow \infty} \Bigl\{ \frac{W^{(q)}(x+\theta)}{W^{(q)}(\theta)} -\1_{\{x>b\}} \Bigl( \frac{\gamma^{(q,\lambda)}_{b+\theta}(x+\theta)}{W^{(q)}(\theta)} - \frac{\overline{\W}^{(q,\lambda)}_{x-b}(x+\theta)}{\Ww^{(q,\lambda)}_{b+\theta}(a+\theta)} \frac{\Gg^{(q,\lambda)}_{b+\theta}(a+\theta)}{W^{(q)}(\theta)}  \Bigr) \Bigr\}, \notag
		\end{equation}
		where
		\begin{align}
            \Ww^{(q,\lambda)}_{b+\theta}(a+\theta) &= \overline{\W}^{(q,\lambda)}_{a-b}(a+\theta) + \lambda \int^a_b W^{(q+\lambda)}(a-y) \overline{\W}^{(q,\lambda)}_{y-b}(y+\theta) \dd y, \notag \\
            \gamma^{(q,\lambda)}_{b+\theta}(x+\theta) &= W^{(q)}(x+\theta) - \overline{\W}_{x-b}^{(q,\lambda)}(x+\theta) + \lambda \int^{b+\theta}_{0} W^{(q)}(b+\theta-u) \overline{\W}^{(q,\lambda)}_{x-b}(x-b+u) \dd u, \notag \\
			\Gg^{(q,\lambda)}_{b+\theta}(a+\theta) &= \gamma^{(q,\lambda)}_{b+\theta}(a+\theta) + \lambda \int^a_b W^{(q+\lambda)}(a-u) \gamma^{(q,\lambda)}_{b+\theta}(u+\theta) \dd u, \notag
		\end{align}
		and the limit of each term needs to be determined.

		By using Eqs.~\eqref{eq:GeneralisedZFunc} -- \eqref{eq:SecondGenScaleFunc1} along with the dominated convergence theorem, 
		\begin{equation}
			\lim\limits_{\theta \rightarrow \infty} \frac{\overline{\W}^{(q,\lambda)}_{y-b}(y+\theta)}{\W^{(q+\lambda)}(\theta)} = \ee^{\varphi_{q+\lambda}b}\Z^{(q)}(y-b,\varphi_{q+\lambda}), \quad y \geq b, \label{eq:LimitofWbarTheta}
		\end{equation}
		and hence
		\begin{equation}
			\lim\limits_{\theta \rightarrow \infty} \frac{\Ww^{(q,\lambda)}_{b+\theta}(a+\theta)}{\W^{(q+\lambda)}(\theta)} = \ee^{\varphi_{q+\lambda}b} \Bigl(\Z^{(q)}(a-b,\varphi_{q+\lambda}) + \lambda \int^a_b W^{(q+\lambda)}(a-y) \Z^{(q)}(y-b,\varphi_{q+\lambda}) \dd y\Bigr). \label{eq:LimitofWwTheta}
		\end{equation}
  
		\noindent Then, for $\Phi_{q}>\varphi_{q+\lambda}$, we have by Eq.~\eqref{eq:LimitofRatioofScaleFuncs}, Lemma \ref{lem:LimitIdentities} (iii) and the dominated convergence theorem that 
		\begin{equation}
			\lim\limits_{\theta \rightarrow \infty} \frac{\gamma^{(q,\lambda)}_{b+\theta}(x+\theta)}{W^{(q)}(\theta)} = \ee^{\Phi_q x} + \lambda \int_0^{\infty} \ee^{\Phi_q (b-y)}\overline{\mathbb{W}}_{x-b}^{(q,\lambda)}(x-b+y)\mathrm{d} y = \gamma_{b}^{(q,\lambda)\downarrow}(x) \notag 
		\end{equation}
		exists, and hence that
		\begin{equation}
			\lim\limits_{\theta \rightarrow \infty} \frac{\Gg^{(q,\lambda)}_{b+\theta}(a+\theta) \color{black}}{W^{(q)}(\theta)} = \gamma^{(q,\lambda)\downarrow}_{b}(a) + \lambda \int^a_b W^{(q+\lambda)}(a-u) \gamma^{(q,\lambda)\downarrow}_{b}(u) \dd u. \notag 
		\end{equation}

		\noindent Since $ \lim\limits_{\theta \rightarrow \infty} \overline{\W}^{(q,\lambda)}_{x-b}(x+\theta) / \Ww^{(q,\lambda)}_{b+\theta}(a+\theta)$ is known already by using Eqs.~\eqref{eq:LimitofWbarTheta} -- \eqref{eq:LimitofWwTheta}, we use the two above limits along with Eq.~\eqref{eq:LimitofRatioofScaleFuncs} to conclude that 
  \begin{equation}
      \uU^{(q,\lambda)\downarrow}_{b,a}(x) = \lim\limits_{\theta \rightarrow \infty} \frac{\uU^{(q,\lambda)}_{b+\theta,a+\theta}(x+\theta)}{W^{(q)}(\theta)} \label{eq:ConclusionofUdownarrow(x)}
  \end{equation} 
  has the same form as Eq.~\eqref{eq:LimitDown-Uba(x)}.
		
	\end{proof}

	\begin{proposition}  \label{label:Prop-OneSidedExitDownwards}
		Let $0 < q,\lambda < \infty$. Then, for $0 \leq x,y,b \leq a$ and at least $\Phi_{q+\lambda} > \varphi_q$, we have
	       \begin{equation}
				\E_x\Big(e^{-q \tau_{0,U}^-} \1_{\{\tau_{0,U}^- < \infty \}} \Big) = \vV^{(q,\lambda)\uparrow}_{b}(x) -  \frac{\vV^{(q,\lambda)\uparrow}_{b}}{\uU^{(q,\lambda)\uparrow}_{b}(0)}\; \uU^{(q,\lambda)\uparrow}_{b}(x;0), \label{Prop:One-SidedExitBelow}
		\end{equation}
		where 
		\begin{align}
	       \uU_{b}^{(q,\lambda)\uparrow}(x;y) =& \; W^{(q)}(x-y)
			- \1_{\{x > b\}} \Bigl( \gamma_{b}^{(q,\lambda)}(x;y)  - \overline{\W}_{x-b}^{(q,\lambda)}(x) \frac{\int^{\infty}_b e^{-\Phi_{q+\lambda}u} \gamma_{b}^{(q,\lambda)}(u;y) \dd u }{\int^{\infty}_b e^{-\Phi_{q+\lambda}u} \overline{\W}_{u-b}^{(q,\lambda)}(u) \dd u} \Bigr)\label{eq:lim_a-of-Uba(x)}, \\ 
            \vV^{(q,\lambda)\uparrow}_{b}(x)
				=& \; Z^{(q)}(x) - \1_{\{x>b\}} \Bigl(\alpha_b^{(q,\lambda)}(x) - \overline{\W}_{x-b}^{(q,\lambda)}(x) \frac{\int^\infty_b \ee^{-\Phi_{q+\lambda}u} \alpha_b^{(q,\lambda)}(u) \dd u}{\int^\infty_b \ee^{-\Phi_{q+\lambda}u} \overline{\W}_{u-b}^{(q,\lambda)}(u) \dd u} \Bigr) , \label{eq:lim_a-of-Vba(x)} \\
            	\uU_{b}^{(q,\lambda)\uparrow}(y) =& \; \Z^{(q+\lambda)}(b-y,\varphi_q) - \lambda \int^{b-y}_0  \Z^{(q+\lambda)}(b-y-u,\varphi_q)W^{(q)}(u) \dd u \notag \\
			&+ \Z^{(q+\lambda)}(b,\varphi_q) \frac{\int^{\infty}_b e^{-\Phi_{q+\lambda}u} \gamma_{b}^{(q,\lambda)}(u;y) \dd u }{\int^{\infty}_b e^{-\Phi_{q+\lambda}u} \overline{\W}_{u-b}^{(q,\lambda)}(u) \dd u},\label{eq:lim_a-of-Uba(a)/W(q+lambda)}
		\end{align} 
		and
		\begin{equation}
			\begin{aligned}
				\vV^{(q,\lambda)\uparrow}_{b}
				=& \; \ee^{\varphi_q b} \Bigl(\frac{q}{\varphi_q} + \lambda \int^b_0 \ee^{-\varphi_q y}\Z^{(q+\lambda)}(y) \dd y \Bigr)  - \lambda \int^{b}_0  \Z^{(q+\lambda)}(b-u,\varphi_q)Z^{(q)}(u) \dd u \\
                &+ \Z^{(q+\lambda)}(b,\varphi_q) \frac{\int^\infty_b \ee^{-\Phi_{q+\lambda}u} \alpha_b^{(q,\lambda)}(u) \dd u}{\int^\infty_b \ee^{-\Phi_{q+\lambda}u} \overline{\W}_{u-b}^{(q,\lambda)}(u) \dd u}. \label{eq:lim_a-of-Vba(a)/W(q+lambda)}
			\end{aligned} 
		\end{equation}
  
	\end{proposition}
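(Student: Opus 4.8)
The plan is to mimic the argument of Proposition \ref{label:Prop-OneSidedExitAbove}, but now we must let $a \to \infty$ rather than shift levels. Starting from Theorem \ref{Thm:ExitFromBelow}, we write
$$\E_x\Big(e^{-q \tau_{0,U}^-} \1_{\{\tau_{0,U}^- < \infty \}} \Big) = \lim_{a \to \infty} \Big(\mathcal{V}_{b, a}^{(q, \lambda)}(x) - \frac{\mathcal{U}_{b, a}^{(q, \lambda)}(x)}{\mathcal{U}_{b, a}^{(q, \lambda)}(a)} \mathcal{V}_{b, a}^{(q, \lambda)}(a)\Big),$$
which is justified because $\{\tau_{0,U}^- < \tau_{a,U}^+\} \uparrow \{\tau_{0,U}^- < \infty\}$ as $a \to \infty$ and by bounded convergence. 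The key difference from the previous proposition is that here the relevant normalising quantity for the ratio $\mathcal{U}_{b,a}^{(q,\lambda)}(a)$ will turn out to be $W^{(q+\lambda)}(a)$ rather than $W^{(q)}(a)$, since the assumption $\Phi_{q+\lambda} > \varphi_q$ makes $\mathbb{W}^{(q+\lambda)}$-growth dominate (via the convolution terms involving $\overline{\mathbb{W}}$). So I would compute the three limits $\lim_{a \to \infty} \mathcal{V}_{b,a}^{(q,\lambda)}(x)$, $\lim_{a \to \infty} \mathcal{U}_{b,a}^{(q,\lambda)}(x;y)$, and $\lim_{a \to \infty} \mathcal{U}_{b,a}^{(q,\lambda)}(a)/W^{(q+\lambda)}(a)$, $\lim_{a \to \infty} \mathcal{V}_{b,a}^{(q,\lambda)}(a)/W^{(q+\lambda)}(a)$ separately.

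For the first two limits, the only $a$-dependence inside $\mathcal{V}_{b,a}^{(q,\lambda)}(x)$ and $\mathcal{U}_{b,a}^{(q,\lambda)}(x;y)$ (for fixed $x \leq a$) sits in the ratio $\mathcal{W}_x^{(q,\lambda)}(x)/\mathcal{W}_b^{(q,\lambda)}(a)$ multiplying $\mathcal{A}_b^{(q,\lambda)}(a)$ (resp.\ $\mathcal{G}_b^{(q,\lambda)}(a;y)$). Since $\mathcal{W}_b^{(q,\lambda)}(a) = \overline{\mathbb{W}}_{a-b}^{(q,\lambda)}(a) + \lambda\int_b^a W^{(q+\lambda)}(a-y)\overline{\mathbb{W}}_{y-b}^{(q,\lambda)}(y)\dd y$ and the integrands $\mathcal{A}_b^{(q,\lambda)}$, $\mathcal{G}_b^{(q,\lambda)}$ are built from $\alpha_b^{(q,\lambda)}$, $\gamma_b^{(q,\lambda)}$ convolved with $W^{(q+\lambda)}$, I would divide numerator and denominator by $e^{\Phi_{q+\lambda}a}$ and use the representation $W^{(q+\lambda)}(a) = e^{\Phi_{q+\lambda}a}W_{\Phi_{q+\lambda}}(a)$ with $W_{\Phi_{q+\lambda}}(\infty) < \infty$ (as recalled in the proof of Lemma \ref{lem:LimitIdentities}), together with dominated convergence — bounding $\overline{\mathbb{W}}_{x-b}^{(q,\lambda)}(x+\theta)/W^{(q+\lambda)}(\theta)$ uniformly as in Lemma \ref{lem:LimitIdentities}(iii) — to obtain that $\mathcal{A}_b^{(q,\lambda)}(a)/\mathcal{W}_b^{(q,\lambda)}(a) \to \big(\int_b^\infty e^{-\Phi_{q+\lambda}u}\alpha_b^{(q,\lambda)}(u)\dd u\big)\big/\big(\int_b^\infty e^{-\Phi_{q+\lambda}u}\overline{\mathbb{W}}_{u-b}^{(q,\lambda)}(u)\dd u\big)$, and likewise for $\mathcal{G}_b^{(q,\lambda)}$. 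Substituting these gives exactly Eqs.~\eqref{eq:lim_a-of-Uba(x)} and \eqref{eq:lim_a-of-Vba(x)}. For the $a$-normalised limits, I would use Eq.~\eqref{eq:SecondGenScaleFunc1} to expand $\overline{\mathbb{W}}_{a-b}^{(q,\lambda)}(a) = \mathbb{W}^{(q+\lambda)}(a) - \lambda\int_0^{a-b}\mathbb{W}^{(q+\lambda)}(a-y)\mathbb{W}^{(q)}(y)\dd y$, normalise the whole of $\mathcal{U}_{b,a}^{(q,\lambda)}(a)$ and $\mathcal{V}_{b,a}^{(q,\lambda)}(a)$ by $W^{(q+\lambda)}(a)$, apply Eq.~\eqref{eq:LimitofRatioofScaleFuncs} (the ratio $W^{(q)}(a-x)/W^{(q)}(a) \to e^{-\Phi_q x}$ type limits, in their $\mathbb{W}$-versions) together with the identity $\Z^{(q+\lambda)}(\cdot,\varphi_q)$ arising from $\lim \mathbb{W}^{(q+\lambda)}$-ratios as in Eq.~\eqref{eq:LimitofWbarTheta}, and collect terms to recover Eqs.~\eqref{eq:lim_a-of-Uba(a)/W(q+lambda)} and \eqref{eq:lim_a-of-Vba(a)/W(q+lambda)}; the $\tfrac{q}{\varphi_q}$ and the $Z^{(q+\lambda)}(b,\varphi_q)$ prefactors come from the $Z^{(q)}(a)/W^{(q+\lambda)}(a)$-type limits combined with the convolution identities for scale functions stated after Eq.~\eqref{eq:LimitofRatioofScaleFuncs}.

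The main obstacle will be bookkeeping the limit of $\mathcal{U}_{b,a}^{(q,\lambda)}(a)/W^{(q+\lambda)}(a)$: this quantity contains the delicate cancellation $W^{(q)}(a-y) - \mathcal{G}_x^{(q,\lambda)}(\cdot)$-type differences, and one must verify carefully that after dividing by $W^{(q+\lambda)}(a)$ the surviving terms reassemble into $\Z^{(q+\lambda)}(b-y,\varphi_q) - \lambda\int_0^{b-y}\Z^{(q+\lambda)}(b-y-u,\varphi_q)W^{(q)}(u)\dd u$ plus the correction term. I expect this to require invoking the convolution identity $(p-q)\int_0^x W^{(p)}(x-y)Z^{(q)}(y)\dd y = Z^{(p)}(x) - Z^{(q)}(x)$ in a limiting form, and checking that the dominated-convergence hypotheses hold uniformly in $a$ — the boundedness of $W_{\Phi_{q+\lambda}}$ and of the relevant exponentially-weighted $\overline{\mathbb{W}}$-ratios, exactly as established in Lemma \ref{lem:LimitIdentities}, is what makes this go through. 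Once the four limits are in hand, substituting into the displayed expression for $\E_x(e^{-q\tau_{0,U}^-}\1_{\{\tau_{0,U}^-<\infty\}})$ and factoring out $W^{(q+\lambda)}(a)$ from numerator and denominator of the ratio yields Eq.~\eqref{Prop:One-SidedExitBelow}.
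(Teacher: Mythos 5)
Your overall route is the paper's: pass to the limit $a\to\infty$ in Theorem \ref{Thm:ExitFromBelow}, and for fixed $x$ obtain \eqref{eq:lim_a-of-Uba(x)}--\eqref{eq:lim_a-of-Vba(x)} by dividing $\Gg_b^{(q,\lambda)}(a;y)$, $\Aa_b^{(q,\lambda)}(a)$ and $\Ww_b^{(q,\lambda)}(a)$ by $W^{(q+\lambda)}(a)$, killing the non-convolution pieces via Lemma \ref{lem:LimitIdentities} and dominated convergence; that part of your plan is sound and matches the paper.

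The genuine gap is your normalisation of the boundary terms $\uU_{b,a}^{(q,\lambda)}(a;y)$ and $\vV_{b,a}^{(q,\lambda)}(a)$ by $W^{(q+\lambda)}(a)$. At $x=a$ one has $\Gg_a^{(q,\lambda)}(a;y)=\gamma_b^{(q,\lambda)}(a;y)$ and $\Ww_a^{(q,\lambda)}(a)=\overline{\W}_{a-b}^{(q,\lambda)}(a)$, so the leading $W^{(q)}(a-y)$ cancels:
\begin{equation*}
W^{(q)}(a-y)-\gamma_b^{(q,\lambda)}(a;y)=\overline{\W}_{a-b}^{(q,\lambda)}(a-y)-\lambda\int_0^{b-y}\overline{\W}_{a-b}^{(q,\lambda)}(a-y-u)W^{(q)}(u)\,\dd u ,
\end{equation*}
and every surviving term grows like $\W^{(q)}(a)$, i.e.\ at rate $\ee^{\varphi_q a}$ (use $\overline{\W}_{a-b}^{(q,\lambda)}(a-y)=\overline{\W}_{b-y}^{(q+\lambda,-\lambda)}(a-y)$). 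Under the standing assumption $\Phi_{q+\lambda}>\varphi_q$ this is strictly slower than $W^{(q+\lambda)}(a)$, so with your normaliser both $\uU_{b,a}^{(q,\lambda)}(a;y)/W^{(q+\lambda)}(a)\to 0$ and $\vV_{b,a}^{(q,\lambda)}(a)/W^{(q+\lambda)}(a)\to 0$: the ratio you need becomes an indeterminate $0/0$, and these limits cannot ``recover'' \eqref{eq:lim_a-of-Uba(a)/W(q+lambda)} and \eqref{eq:lim_a-of-Vba(a)/W(q+lambda)} as you claim. The correct normaliser is $\W^{(q)}(a)$, the $q$-scale function of $Y$ (not $W^{(q)}$, not $W^{(q+\lambda)}$), combined with the limits $\overline{\W}_{b-y}^{(q+\lambda,-\lambda)}(a-y)/\W^{(q)}(a)\to \ee^{-\varphi_q b}\Z^{(q+\lambda)}(b-y,\varphi_q)$ and $\overline{\Z}_b^{(q+\lambda,-\lambda)}(a)/\W^{(q)}(a)\to \tfrac{q}{\varphi_q}+\lambda\int_0^b \ee^{-\varphi_q y}\Z^{(q+\lambda)}(y)\,\dd y$ — this is where the $\tfrac{q}{\varphi_q}$ and $\Z^{(q+\lambda)}(\cdot,\varphi_q)$ terms actually come from (your appeal to $Z^{(q)}(a)/W^{(q+\lambda)}(a)$-type limits and to Eq.~\eqref{eq:LimitofWbarTheta} is off target: the latter is the $\W^{(q+\lambda)}$-normalised shift limit used for Proposition \ref{label:Prop-OneSidedExitAbove}, not the one needed here). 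The residual factor $\ee^{\varphi_q b}$ is absorbed into the definitions of $\uU_b^{(q,\lambda)\uparrow}(y)$ and $\vV_b^{(q,\lambda)\uparrow}$ and cancels in the final ratio, which is why the statement of the proposition is normalisation-free; but as written your computation of the two boundary limits would fail.
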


 \begin{remark}
    \upshape {The bivariate limits are proven here since they are the same as that needed for the one-sided potential measure in Theorem  \ref{label:Thm-PotentialMeasure}. }
 \end{remark}
	\begin{proof}[Proof of Proposition \ref{label:Prop-OneSidedExitDownwards}]
		We derive the desired identity by taking the limit of the two-sided exit downwards from Theorem \ref{Thm:ExitFromBelow} as $a \rightarrow \infty$. 
		Additionally, in some of the limits below, the dominated convergence theorem is applied since its usage is justified by noticing  from Eq.~\eqref{eq:LimitofRatioofScaleFuncs} that $W^{(q+\lambda)}(a-y)/W^{(q+\lambda)}(a) \rightarrow \ee^{-\Phi_{q+\lambda} y}$ as $a \rightarrow \infty$.
		
		Now, for $\Phi_{q+\lambda}>\varphi_q$, we have by Lemma \ref{lem:LimitIdentities} (ii) and the dominated convergence theorem that 	$\lim\limits_{a \rightarrow \infty} \gamma_{b}^{(q,\lambda)}(a;y)/W^{(q+\lambda)}(a) = \lim\limits_{a \rightarrow \infty} \alpha_{b}^{(q,\lambda)}(a)/W^{(q+\lambda)}(a) = 0$, and hence by using Eq.~\eqref{eq:LimitofRatioofScaleFuncs} that 
		\begin{align}
			\lim\limits_{a \rightarrow \infty} \frac{\Gg_{b}^{(q,\lambda)}(a;y)}{W^{(q+\lambda)}(a)}  =& \;  \lambda \color{black}\int^\infty_b \ee^{-\Phi_{q+\lambda} u} \gamma_{b}^{(q,\lambda)}(u;y) \dd u, \notag \\
            \lim\limits_{a \rightarrow \infty} \frac{\Aa_{b}^{(q,\lambda)}(a)}{W^{(q+\lambda)}(a)}  =& \;  \lambda \color{black} \int^\infty_b \ee^{-\Phi_{q+\lambda} u} \alpha_{b}^{(q,\lambda)}(u) \dd u, \notag \\
			\lim\limits_{a \rightarrow \infty} \frac{\Ww_{b}^{(q,\lambda)}(a;y)}{W^{(q+\lambda)}(a)}  =& \;  \lambda \color{black} \int^\infty_b \ee^{-\Phi_{q+\lambda} u} \overline{\W}_{u-b}^{(q,\lambda)}(u-y) \dd u. \label{eq:MiniLimitIdentities}
		\end{align}
	Thus, by using the above identities, we conclude that
 \begin{equation}
     \uU_{b}^{(q,\lambda)\uparrow}(x;y) := \lim\limits_{a \rightarrow \infty}\uU^{(q,\lambda)}_{b,a}(x;y), \quad \text{ and } \quad \vV_{b}^{(q,\lambda)\uparrow}(x) := \lim\limits_{a \rightarrow \infty}\vV^{(q,\lambda)}_{b,a}(x), \label{eq:ConclusionofUuparrow(x;y)}
 \end{equation}
 have the forms of Eqs.~\eqref{eq:lim_a-of-Uba(x)} and \eqref{eq:lim_a-of-Vba(x)}, respectively.
		
		Now, we notice that \begin{align}
        \lim\limits_{a \rightarrow \infty}\uU^{(q,\lambda)}_{b,a}(a;y)/\W^{(q)}(a) &= \lim\limits_{a \rightarrow \infty}\uU^{(q,\lambda)\uparrow}_{b}(a;y)/\W^{(q)}(a), \notag \\ 
        \lim\limits_{a \rightarrow \infty}\vV^{(q,\lambda)}_{b,a}(a)/\W^{(q)}(a) &= \lim\limits_{a \rightarrow \infty}\vV^{(q,\lambda)\uparrow}_{b}(a)/\W^{(q)}(a). \notag
        \end{align} 
        We derive these limits by using Eqs.~\eqref{eq:GeneralisedZFunc} and \eqref{eq:LimitofRatioofScaleFuncs} along with the dominated convergence theorem to observe that
		
			\begin{equation}
				\frac{\overline{\W}^{(q,\lambda)}_{a-b}(a-y)}{{\W}^{(q)}(a)} = \frac{\overline{\W}^{(q+\lambda,-\lambda)}_{b-y}(a-y)}{{\W}^{(q)}(a)}  \rightarrow \color{black} \ee^{-\varphi_q b} \Z^{(q+\lambda)}(b-y,\varphi_q), \quad \text{ as } a \rightarrow \infty, \label{eq:LimitofWBoldOverline}
                \end{equation}

                \begin{equation}
                 \frac{\overline{\Z}^{(q+\lambda,-\lambda)}_{b}(a)}{{\W}^{(q)}(a)}  \rightarrow \color{black} \frac{q}{\varphi_q}+\lambda \int^b_0 \ee^{-\varphi_q y}\Z^{(q+\lambda)}(y) \dd y, \quad \text{ as } a \rightarrow \infty.
			\end{equation}
		Thus, using the two above equations, that $\Gg^{(q,\lambda)}_{a}(a;y) = \gamma^{(q,\lambda)}_{b}(a;y)$, $\Aa^{(q,\lambda)}_{a}(a) = \alpha^{(q,\lambda)}_{b}(a)$ and the dominated convergence theorem, 
			\begin{equation}
				\lim\limits_{a \rightarrow \infty} \frac{1}{{\W}^{(q)}(a)}\bigl( W^{(q)}(a-y) -\Gg^{(q,\lambda)}_{a}(a;y)\bigr)= \ee^{-\varphi_q b} \Bigl( \Z^{(q+\lambda)}(b-y,\varphi_q) - \lambda \int^{b-y}_0  \Z^{(q+\lambda)}(b-y-u,\varphi_q)W^{(q)}(u) \dd u \Bigr), \label{eq:AuxLimitW-gamma}
			\end{equation}
        and
			\begin{equation}
				\lim\limits_{a \rightarrow \infty} \frac{1}{{\W}^{(q)}(a)}\bigl( Z^{(q)}(a) -\Aa^{(q,\lambda)}_{a}(a)\bigr) = \frac{q}{\varphi_q}+\lambda \int^b_0 \ee^{-\varphi_q y}\Z^{(q+\lambda)}(y) \dd y  - \lambda \ee^{-\varphi_q b} \int^{b}_0  \Z^{(q+\lambda)}(b-u,\varphi_q)Z^{(q)}(u) \dd u. \label{eq:AuxLimitW-alpha}
			\end{equation}
        
		\noindent Using Eq.~\eqref{eq:LimitofWBoldOverline} and the above two limits, we conclude that 
  \begin{equation}
  \uU_{b}^{(q,\lambda)\uparrow}(y) = \lim\limits_{a \rightarrow \infty}  \ee^{\varphi_q b} \cdot \; \uU^{(q,\lambda)}_{b,a}(a;y) / \W^{(q)}(a) \quad \text{ and } \quad \vV_{b}^{(q,\lambda)\uparrow} = \lim\limits_{a \rightarrow \infty}  \ee^{\varphi_q b} \cdot \; \vV^{(q,\lambda)}_{b,a}(a) / \W^{(q)}(a) \label{eq:ConclusionofUuparrow(y)}
  \end{equation}
 have the forms of Eqs.~\eqref{eq:lim_a-of-Uba(a)/W(q+lambda)} and \eqref{eq:lim_a-of-Vba(a)/W(q+lambda)}, respectively.
	\end{proof}

 \begin{remark}
   \upshape {  If the assumptions on the right-inverses $\Phi$ and $\varphi$ of the corresponding L\'evy exponents are relaxed, the limits yield indeterminate forms. These assumptions are hence imposed to ensure that appropriate limiting forms can be derived.}
 \end{remark}

	\subsection{Potential measures}
	
	In this subsection, we shall derive identities for the potential measure of $U$. We have first the following for the potential measure killed on  exiting $[0,a]$. 
	\begin{theorem}
		\label{label:Thm-PotentialMeasure}
		Let $0<b\leq a$ and $0<\lambda < \infty$. Then, for a Borel set $B \subseteq \R$, $q \geq 0$   we have 
		\begin{enumerate}
			\item[\upshape{(i)}] for $0 \leq x,y \leq a$, 
			\begin{equation}
				\E_x\Bigl( \int^\infty_0 e^{-q t} \1_{\{U_t \in B, \; t < \tau_{a,U}^+ \wedge \tau_{0,U}^-\}} \dd t \Bigr) = \int_{B\cap[0,a]\color{black}} \Bigl( \frac{\uU^{(q,\lambda)}_{b,a}(a;y)}{\uU^{(q,\lambda)}_{b,a}(a;0)}\uU^{(q,\lambda)}_{b,a}(x;0) - \uU^{(q,\lambda)}_{b,a}(x;y) \Bigr) \dd y, \label{eq:Cor1-PotentialMeasure}
			\end{equation}
			where $\uU^{(q,\lambda)}_{b,a}(x;y)$ is given in Eq.~\eqref{eq:UScaleFunc}.
			\item[\upshape{(ii)}] for $0 \leq x,b \leq a$, $y\geq 0$ and at least
			$\Phi_{q+\lambda} > \varphi_q$,
				\begin{equation}
					\E_x\left( \int^\infty_0 e^{-q t} \1_{\{U_t \in B, \; t < \tau_{0,U}^-\}} \dd t \right) = \int_{B\cap[0,\infty)\color{black}} \Bigl( \frac{{\uU}_{b}^{(q,\lambda)\uparrow}(y)}{\uU_{b}^{(q,\lambda)\uparrow}(0)} \uU_{b}^{(q,\lambda)\uparrow}(x;0) - \uU_{b}^{(q,\lambda)\uparrow}(x;y) \Bigr) \dd y, \notag
				\end{equation}
			where $\uU_{b}^{(q,\lambda)\uparrow}(x;y)$ and $\uU_{b}^{(q,\lambda)\uparrow}(y)$ are given by Eqs.~\eqref{eq:lim_a-of-Uba(x)} and \eqref{eq:lim_a-of-Uba(a)/W(q+lambda)}, respectively.
				\item[\upshape{(iii)}] for $0 \leq x,b \leq a$, $y\leq a$ and at least $\Phi_{q} > \varphi_{q+\lambda}$, 
				\begin{linenomath}
					\begin{equation}
						\E_x\left( \int^\infty_0 e^{-q t} \1_{\{U_t \in B, \; t < \tau_{a,U}^+\}} \dd t \right) = \int_{B\cap(-\infty,a]\color{black}} \Bigl( \frac{{\uU}_{b,a}^{(q,\lambda)\downarrow}(x)}{{\uU}_{b,a}^{(q,\lambda)\downarrow}(a)} {\uU}_{b,a}^{(q,\lambda)\downarrow}(a;y) - {\uU}_{b,a}^{(q,\lambda)\downarrow}(x;y) \Bigr) \dd y, \notag
					\end{equation}
				\end{linenomath}
				where 
				\begin{align}
					\uU_{b,a}^{(q,\lambda)\downarrow}(x;y) =& \; W^{(q)}(x-y)
					- \1_{\{x > b\}} \Bigl( \gamma_{b}^{(q,\lambda)}(x;y) \notag \\
					&- {\Z}^{(q)}(x-b;\varphi_{q+\lambda}) \frac{\gamma_{b}^{(q,\lambda)}(a;y) + \lambda\int^{a}_b W^{(q+\lambda)}(a-u) \gamma_{b}^{(q,\lambda)}(u;y) \dd u }{\Z^{(q)}(a-b;\varphi_{q+\lambda})+ \lambda \int^{a}_b W^{(q+\lambda)}(a-u) {\Z}^{(q)}(u-b;\varphi_{q+\lambda}) \dd u} \Bigr),\label{eq:LimitDown-Uba(x;y)}
				\end{align}
				and for which $\uU_{b,a}^{(q,\lambda)\downarrow}(x)$ is given in Eq.~\eqref{eq:LimitDown-Uba(x)}.
					\item[\upshape{(iv)}] for $0 \leq x,b \leq a$ and at least $\Phi_{q} > \varphi_{q+\lambda}$,
					\begin{linenomath}
						\begin{equation}
							\E_x\left( \int^\infty_0 e^{-q t} \1_{\{U_t \in B\}} \dd t \right) = \int_B \Bigl( \frac{\widetilde{{\uU}}_{b}^{(q,\lambda)}(y)}{\widetilde{{\uU}}_{b}^{(q,\lambda)}}\overline{{\uU}}_{b}^{(q,\lambda)}(x) - \overline{{\uU}}_{b}^{(q,\lambda)}(x;y) \Bigr) \dd y, \notag
						\end{equation}
					\end{linenomath}
					where 
					\begin{align}
						\overline{\uU}_{b}^{(q,\lambda)}(x;y) =& \; W^{(q)}(x-y)
						- \1_{\{x > b\}} \Bigl( \gamma_{b}^{(q,\lambda)}(x;y) - {\Z}^{(q)}(x-b;\varphi_{q+\lambda}) \frac{\int^{\infty}_b \ee^{-\Phi_{q+\lambda} u} \gamma_{b}^{(q,\lambda)}(u;y) \dd u }{\int^{\infty}_b \ee^{-\Phi_{q+\lambda} u} {\Z}^{(q)}(u-b;\varphi_{q+\lambda}) \dd u} \Bigr),\label{eq:LimitNatural-Uba(x;y)} \\
						\overline{\uU}_{b}^{(q,\lambda)}(x) =& \; \ee^{\Phi_q x}
						- \1_{\{x > b\}} \Bigl( \gamma_{b}^{(q,\lambda)\downarrow}(x) - {\Z}^{(q)}(x-b;\varphi_{q+\lambda}) \frac{\int^{\infty}_b \ee^{-\Phi_{q+\lambda} u} \gamma_{b}^{(q,\lambda)\downarrow}(u) \dd u }{ \int^{\infty}_b \ee^{-\Phi_{q+\lambda} u} {\Z}^{(q)}(u-b;\varphi_{q+\lambda}) \dd u} \Bigr),\label{eq:LimitNatural-Uba(x)} \\
						\widetilde{{\uU}}_{b}^{(q,\lambda)}(y) &= \Z^{(q+\lambda)}(b-y,\varphi_q) - \lambda \int^{b-y}_0  \Z^{(q+\lambda)}(b-y-u,\varphi_q)W^{(q)}(u) \dd u \notag \\
						&\;\;\;\;\; + \frac{\lambda}{\varphi_{q+\lambda} - \varphi_{q}} \frac{\int^{\infty}_b \ee^{-\Phi_{q+\lambda} u} \gamma_{b}^{(q,\lambda)}(u;y) \dd u }{\int^{\infty}_b \ee^{-\Phi_{q+\lambda} u} {\Z}^{(q)}(u-b;\varphi_{q+\lambda}) \dd u},\label{eq:LimitHarpoons-Ub(y)} \\
						\widetilde{{\uU}}_{b}^{(q,\lambda)} &=  - \lambda \int^{\infty}_0 \ee^{-\Phi_q (b-u)} \Z^{(q+\lambda)}(u,\varphi_q) \dd u + \frac{\lambda}{\varphi_{q+\lambda} - \varphi_{q}} \frac{\int^{\infty}_b \ee^{-\Phi_{q+\lambda} u} \gamma_{b}^{(q,\lambda)\downarrow}(u;y) \dd u }{\int^{\infty}_b \ee^{-\Phi_{q+\lambda} u} {\Z}^{(q)}(u-b;\varphi_{q+\lambda}) \dd u},\label{eq:LimitHarpoons-Ub}
					\end{align}
		\end{enumerate}
	\end{theorem}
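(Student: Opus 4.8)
The plan is to prove part \upshape{(i)} directly, by the strong--Markov renewal argument used for Theorems~\ref{Thm:ExitFromAbove} and \ref{Thm:ExitFromBelow}, and then to deduce parts \upshape{(ii)}--\upshape{(iv)} from \upshape{(i)} by the limiting schemes already set up in Propositions~\ref{label:Prop-OneSidedExitAbove} and \ref{label:Prop-OneSidedExitDownwards}.

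For \upshape{(i)}, write $\rho^{(q,\lambda)}(x,\dd y):=\E_x\bigl(\int_0^\infty\ee^{-qt}\1_{\{U_t\in\dd y,\,t<\tau_{a,U}^+\wedge\tau_{0,U}^-\}}\dd t\bigr)$ and split according to whether $U$ starts in an $X$--phase ($x\in[0,b]$) or a $Y$--phase ($x\in(b,a]$). Using that $\{U_t:t<T_{b,U}^+\}\overset{d}{=}\{X_t:t<T_{b,X}^+\}$ under $\PP_x$ for $x\in[0,b]$ and $\{U_t:t<T_{b,U}^-\}\overset{d}{=}\{Y_t:t<T_{b,Y}^-\}$ for $x\in(b,a]$, conditioning on $T_{b,U}^+$ (resp.\ $T_{b,U}^-$) and invoking the strong Markov property of $(U,Q)$ from Lemma~\ref{Thm:SMPforBivariateProcess} gives
\[
\rho^{(q,\lambda)}(x,\dd y)=R^{(q,\lambda)}(x,\dd y)+\E_x\bigl(\ee^{-qT_{b,X}^+}\1_{\{T_{b,X}^+<\tau_{a,X}^+\wedge\tau_{0,X}^-\}}\,\rho^{(q,\lambda)}(X_{T_{b,X}^+},\dd y)\bigr),\qquad x\in[0,b],
\]
with $R^{(q,\lambda)}$ the killed resolvent of Lemma~\ref{lem:PoissonPotentialsforResolvents}~\upshape{(i)} and $X_{T_{b,X}^+}\in(b,a]$ on the indicated event, together with the analogous identity on $(b,a]$ involving $\widetilde R^{(q,\lambda)}$ of Lemma~\ref{lem:PoissonPotentialsforResolvents}~\upshape{(ii)} and $Y_{T_{b,Y}^-}\in[0,b]$. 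Substituting the $[0,b]$ identity into the $(b,a]$ identity expresses $\rho^{(q,\lambda)}(\cdot,\dd y)$ on $(b,a]$ through the single unknown $\rho^{(q,\lambda)}(b,\dd y)$; evaluating the resulting expectations involving $T_{b,X}^+$ and $T_{b,Y}^-$ of the scale functions and their convolutions by means of \eqref{eq:Tb+ExpectationEvaluation}--\eqref{eq:Vb-ExpectationEvaluation}, Corollary~\ref{lem:PoissonPotentialsAndFluctuations} and the bivariate auxiliary functions $\Ww_u^{(q,\lambda)}(x;z)$, $\Gg_u^{(q,\lambda)}(x;z)$ (exactly as in the proof of Theorem~\ref{Thm:ExitFromAbove}, with the reductions $\Ww_x^{(q,\lambda)}(x;z)=\overline{\W}_{x-b}^{(q,\lambda)}(x-z)$ and $\Gg_x^{(q,\lambda)}(x;z)=\gamma_b^{(q,\lambda)}(x;z)$) and then closing the equation at $x=b$ determines $\rho^{(q,\lambda)}(b,\dd y)$ in closed form; back--substitution collapses the terms into the stated expression built from $\uU_{b,a}^{(q,\lambda)}(x;y)$ of \eqref{eq:UScaleFunc}, using also $\E_x[\ee^{-q\tau_{a,U}^+}\1_{\{\tau_{a,U}^+<\tau_{0,U}^-\}}]=\uU_{b,a}^{(q,\lambda)}(x)/\uU_{b,a}^{(q,\lambda)}(a)$ from Theorem~\ref{Thm:ExitFromAbove}. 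The bounded--variation case is handled first and the unbounded--variation case follows by the same strong--approximation argument used in Lemma~\ref{lem:PoissonPotentialsforResolvents}; the resolvent is supported on $[0,a]$ since $U_t\in[0,a]$ for $t<\tau_{a,U}^+\wedge\tau_{0,U}^-$, whence the restriction $B\cap[0,a]$.

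For \upshape{(ii)}, let $a\to\infty$ in \upshape{(i)}: since $\tau_{a,U}^+\uparrow\infty$ $\PP_x$--a.s., monotone convergence turns the left--hand side into the resolvent killed only at $\tau_{0,U}^-$, while on the right the limits of $\uU_{b,a}^{(q,\lambda)}(x;y)$ and of $\ee^{\varphi_q b}\uU_{b,a}^{(q,\lambda)}(a;y)/\W^{(q)}(a)$ — namely $\uU_{b}^{(q,\lambda)\uparrow}(x;y)$ of \eqref{eq:lim_a-of-Uba(x)} and $\uU_{b}^{(q,\lambda)\uparrow}(y)$ of \eqref{eq:lim_a-of-Uba(a)/W(q+lambda)} — are precisely those computed in the proof of Proposition~\ref{label:Prop-OneSidedExitDownwards} via Lemma~\ref{lem:LimitIdentities}~\upshape{(ii)}, \eqref{eq:LimitofRatioofScaleFuncs} and dominated convergence (with the domination $W^{(q+\lambda)}(a-y)/W^{(q+\lambda)}(a)\le C$ noted there), under $\Phi_{q+\lambda}>\varphi_q$. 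For \upshape{(iii)}, apply the level--invariance argument of Proposition~\ref{label:Prop-OneSidedExitAbove}: the resolvent killed only at $\tau_{a,U}^+$ equals $\lim_{\theta\to\infty}$ of the doubly--killed resolvent of \upshape{(i)} with $b,a,x,y,B$ all shifted by $\theta$; dividing through by $W^{(q)}(\theta)$, the limits of $\uU_{b+\theta,a+\theta}^{(q,\lambda)}(x+\theta;y+\theta)$, $\gamma_{b+\theta}^{(q,\lambda)}$ and $\Ww_{b+\theta}^{(q,\lambda)}(a+\theta)$ are the ones obtained in the proof of Proposition~\ref{label:Prop-OneSidedExitAbove} through Lemma~\ref{lem:LimitIdentities}~\upshape{(i)},\upshape{(iii)} and \eqref{eq:LimitofRatioofScaleFuncs}, producing $\uU_{b,a}^{(q,\lambda)\downarrow}(x;y)$ of \eqref{eq:LimitDown-Uba(x;y)} under $\Phi_q>\varphi_{q+\lambda}$. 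Finally \upshape{(iv)} is obtained by letting $a\to\infty$ in \upshape{(iii)} (equivalently, shifting levels in \upshape{(ii)} and sending the shift to infinity): dividing numerator and denominator of the $a$--dependent ratio in $\uU_{b,a}^{(q,\lambda)\downarrow}$ by $W^{(q+\lambda)}(a)$, the terms $\gamma_b^{(q,\lambda)}(a;y)/W^{(q+\lambda)}(a)$ and $\Z^{(q)}(a-b;\varphi_{q+\lambda})/W^{(q+\lambda)}(a)$ are controlled by Lemma~\ref{lem:LimitIdentities}~\upshape{(ii)} and \eqref{eq:LimitofRatioofScaleFuncs}, collapsing the ratio to the $\ee^{-\Phi_{q+\lambda}u}$--weighted integrals in \eqref{eq:LimitNatural-Uba(x;y)}--\eqref{eq:LimitNatural-Uba(x)}, while the remaining limit of $\uU_{b,a}^{(q,\lambda)\downarrow}(a;y)$ and $\uU_{b,a}^{(q,\lambda)\downarrow}(a)$, after division by $(\varphi_{q+\lambda}-\varphi_q)^{-1}W^{(q+\lambda)}(a)$, yields $\widetilde{\uU}_b^{(q,\lambda)}(y)$ and $\widetilde{\uU}_b^{(q,\lambda)}$ of \eqref{eq:LimitHarpoons-Ub(y)}--\eqref{eq:LimitHarpoons-Ub}.

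The main obstacle is the bookkeeping in part \upshape{(i)}: as in the proofs of Theorems~\ref{Thm:ExitFromAbove} and \ref{Thm:ExitFromBelow} one must track several nested expectations at $T_{b,X}^+$ and $T_{b,Y}^-$ of scale functions and their convolutions and check that all contributions telescope into the compact form \eqref{eq:UScaleFunc}; carrying the second argument $(\,\cdot\,;y)$ through the entire computation and exploiting the boundary reductions of $\Ww,\Gg,\Aa$ at the level $x$ is what makes the cancellations close. A secondary point is the justification of each limit interchange and the precise role of the hypotheses $\Phi_{q+\lambda}>\varphi_q$ in \upshape{(ii)} and $\Phi_q>\varphi_{q+\lambda}$ in \upshape{(iii)}--\upshape{(iv)}: without them the relevant ratios of scale functions fail to converge and the limiting objects become indeterminate, as flagged in the Remark following Proposition~\ref{label:Prop-OneSidedExitDownwards}.
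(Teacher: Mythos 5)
Your proposal is correct and follows the paper's approach: part (i) is obtained by a strong--Markov renewal decomposition at the Poissonian switching times, closed at $x=b$ and telescoped into $\uU^{(q,\lambda)}_{b,a}(\cdot;\cdot)$ via Lemma~\ref{lem:PoissonPotentialsforResolvents}, Corollary~\ref{lem:PoissonPotentialsAndFluctuations}, and the convolution identities of the Appendix, and parts (ii)--(iv) are the $a\to\infty$ and level--shift limits already set up in Lemma~\ref{lem:LimitIdentities} and Propositions~\ref{label:Prop-OneSidedExitAbove}--\ref{label:Prop-OneSidedExitDownwards}. (Your decomposition on $[0,b]$ goes directly through $T_{b,X}^{+}$ where the paper first passes through $\tau_{b,X}^{+}$ and only uses the Poisson time at $x=b$, but the two reorganizations produce the same linear system in $\rho^{(q,\lambda)}(b,\cdot)$.) One detail to correct in (iv): the objects $\widetilde{\uU}_{b}^{(q,\lambda)}(y)$ and $\widetilde{\uU}_{b}^{(q,\lambda)}$ are obtained by normalizing $\uU_{b,a}^{(q,\lambda)\downarrow}(a;y)$ and $\uU_{b,a}^{(q,\lambda)\downarrow}(a)$ by $\ee^{-\varphi_q b}\,\W^{(q)}(a)$, with the $\lambda/(\varphi_{q+\lambda}-\varphi_q)$ factor coming from Eq.~\eqref{eq:LimitofZFunctionforPotential}; normalizing by $(\varphi_{q+\lambda}-\varphi_q)^{-1}W^{(q+\lambda)}(a)$ as you write would force both limits to zero, since $W^{(q+\lambda)}$ grows strictly faster than $\W^{(q)}$ when $\Phi_{q+\lambda}>\varphi_q$.
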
	
	\begin{proof}
	\upshape{(i)} 
		Using the same reasoning as in the previous section, $\E_x\big( \int^\infty_0 e^{-q t} \1_{\{U_t \in B, \; t < \tau_{a,U}^+ \wedge \tau_{0,U}^-\}} \dd t \big)$ will be denoted by $R^X(x,B)$ $(R^Y(x,B))$ for $x \in [0,b]$ $(x \in (b,a])$.
		
		For $x \in [0,b]$, we have by conditioning on $\tau_{b,U}^+$ and the strong Markov property  that 
		\begin{align}
			R^X(x,B)
			=& \; \E_x\left( \int^\infty_0 e^{-q t} \1_{\{U_t \in B, \; t < \tau_{b,U}^+ \wedge \tau_{0,U}^-\}} \dd t \right) + \E_x\left( \int^\infty_0 e^{-q t} \1_{\{U_t \in B, \; \tau_{b,U}^+ < t < \tau_{a,U}^+ \wedge \tau_{0,U}^-\}} \dd t \right) \notag \\
			=& \; \E_x\left( \int^\infty_0 e^{-q t} \1_{\{X_t \in B, \; t < \tau_{b,X}^+ \wedge \tau_{0,X}^-\}} \dd t \right) + \E_x\left( e^{-q \tau_{b,X}^{+}} \1_{\{\tau_{b,X}^{+} < \tau_{0,X}^{-}\}} \right) \E_b\left( \int^\infty_0 e^{-q t} \1_{\{U_t \in B, \; t < \tau_{a,U}^+ \wedge \tau_{0,U}^-\}} \dd t \right) \notag \\
			=& \; \int_{B\cap [0,b]} \left( \frac{W^{(q)}(b-y)}{W^{(q)}(b)}W^{(q)}(x) - W^{(q)}(x-y) \right) \dd y + \frac{W^{(q)}(x)}{W^{(q)}(b)} R^X(b,B) \label{eq:Thm-SMPx<bIdentity1}
		\end{align}
		where the second equality follows by recalling that $\tau_{b,X}^+ \leq T_{b<X}^+$, $\tau_{b,U}^+ \leq T_{b,U}^+$ and noticing that $\{X_t, t < T_{b,X}^+\}$ and $\{U_t, t < T_{b,U}^+\}$ have the same distribution w.r.t. $\PP_x$ when $x \in [0,b]$, and the last equality follows by using the classical Eqs.~\eqref{eq:ClassicalExitfromAbove} and \eqref{eq:ClassicalKilledPotential} of the Appendix. 
		
		Now, considering $x \in (b,a]$ and noticing that $\{Y_t, t < T_{b,Y}^-\}$ and $\{U_t, t < T_{b,U}^-\}$ have the same distribution w.r.t. $\PP_x$ for these $x$-values, we condition on $T_{b,U}^-$ and use the strong Markov property  to get
		\begin{align}
			R^Y (x,B) &= \; \E_x\left( \int^\infty_0 e^{-q t} \1_{\{U_t \in B, \; t < T_{b,U}^- \wedge \tau_{a,U}^+ \wedge \tau_{0,U}^-\}} \ \dd t \right) + \E_x\left( \int^\infty_0 e^{-q t} \1_{\{U_t \in B, \; T_{b,U}^- < t < \tau_{a,U}^+ \wedge \tau_{0,U}^-\}} \dd t \right) \notag \\
			&= \; \E_x\left( \int^\infty_0 e^{-q t} \1_{\{Y_t \in B, \; t < T_{b,Y}^- \wedge \tau_{a,Y}^+ \wedge \tau_{0,Y}^-\}} \dd t \right) + \E_x\Bigl( e^{-q T_{b,Y}^-} \1_{ \{ T_{b,Y}^- < \tau_{a,Y}^+ \wedge \tau_{0,Y}^-\}} R^X(Y_{T_{b,Y}^-},B) \Bigr) \notag \\ 
			&= \; \int_{B \cap [0,a]} \Bigl( \frac{\Ww^{(q,\lambda)}_{a}(a;y)}{\Ww^{(q,\lambda)}_{a}(a)} \Ww^{(q,\lambda)}_{x}(x) - \Ww^{(q,\lambda)}_{x}(x;y) \Bigr) \dd y \nonumber \\
			&	\quad + \int_{B\cap[0,b]} \Bigl[ \frac{W^{(q)}(b-y)}{W^{(q)}(b)} \E_x\bigl( e^{-q T_{b,Y}^-} \1_{ \{ T_{b,Y}^- < \tau_{a,Y}^+ \wedge \tau_{0,Y}^-\}} W^{(q)}(Y_{T_{b,Y}^-}) \bigr) \nonumber \\	
			&\quad 	-  \E_x\bigl( e^{-q T_{b,Y}^-} \1_{ \{ T_{b,Y}^- < \tau_{a,Y}^+ \wedge \tau_{0,Y}^-\}} W^{(q)}(Y_{T_{b,Y}^-}-y) \bigr) \Bigr] \dd y + \frac{R^X(b,B)}{W^{(q)}(b)}\E_x\bigl( e^{-q T_{b,Y}^-} \1_{ \{ T_{b,Y}^- < \tau_{a,Y}^+ \wedge \tau_{0,Y}^-\}} W^{(q)}(Y_{T_{b,Y}^-}) \bigr), 
					\label{eq:Cor1-SMPx>bIdentity-Intermediate1A}
		\end{align}
		where the last equality follows by using Lemma \ref{lem:PoissonPotentialsforResolvents} (ii) and by substituting Eq.~\eqref{eq:Thm-SMPx<bIdentity1}.
		
		Now, note that the first and third expectation in the above equation are special cases (for $y=0$) of the second expectation in this equation, and so it suffices to derive the latter one. To do this, for $x>b$, we use Eqs.~\eqref{eq:Vb-ExpectationEvaluation} and \eqref{eq:UScaleFunc} to write the second expectation of Eq.~\eqref{eq:Cor1-SMPx>bIdentity-Intermediate1A} 
		as
		\begin{align}
			\E_x &\bigl( e^{-q T_{b,Y}^-} \1_{ \{ T_{b,Y}^- < \tau_{a,Y}^+ \wedge \tau_{0,Y}^-\}} W^{(q)}(Y_{T_{b,Y}^-}-y) \bigr) \notag \\
			=& \; \frac{\Ww_{x}^{(q,\lambda)}(x)}{\Ww_{a}^{(q,\lambda)}(a)} \Bigl( \Gg_{a}^{(q,\lambda)}(a;y) - W^{(q)}(a-y)  + \color{black} \Ww_{a}^{(q,\lambda)}(a;y) \Bigr)
			- \Bigl(  \Gg_{x}^{(q,\lambda)}(x;y) -W^{(q)}(x-y)  + \color{black} \Ww_{x}^{(q,\lambda)}(x;y) \Bigr) \notag \\
			=& \; \frac{\Ww_{x}^{(q,\lambda)}(x)}{\Ww_{a}^{(q,\lambda)}(a)} \Bigl( \frac{\Ww_{a}^{(q,\lambda)}(a)}{\Ww_{b}^{(q,\lambda)}(a)} \Gg_{b}^{(q,\lambda)}(a;y) - \uU^{(q,\lambda)}_{b,a}(a;y)  + \color{black} \Ww_{a}^{(q,\lambda)}(a;y) \Bigr) \notag \\
			&- \Bigl(  \frac{\Ww_{x}^{(q,\lambda)}(x)}{\Ww_{b}^{(q,\lambda)}(a)} \Gg_{b}^{(q,\lambda)}(a;y)  - \uU^{(q,\lambda)}_{b,a}(x;y)  + \color{black} \Ww_{x}^{(q,\lambda)}(x;y) \Bigr) \notag \\
			=& \; \uU^{(q,\lambda)}_{b,a}(x;y) - \frac{\Ww_{x}^{(q,\lambda)}(x)}{\Ww_{a}^{(q,\lambda)}(a)} \uU^{(q,\lambda)}_{b,a}(a;y)  + \color{black} \Bigl( 
			\frac{\Ww_{a}^{(q,\lambda)}(a;y)}{\Ww_{a}^{(q,\lambda)}(a)} \Ww^{(q,\lambda)}_{x}(x) - \Ww_{x}^{(q,\lambda)}(x;y) \Bigr). \label{eq:Thm-ExW(q)Vb-ShiftedIdentity}
		\end{align}
		Then, by substituting the above equation into Eq.~\eqref{eq:Cor1-SMPx>bIdentity-Intermediate1A},
		\begin{align}
			R^Y (x,B)  &= \; \int_{B \cap (b,a]} \Bigl( \frac{\Ww^{(q,\lambda)}_{a}(a;y)}{\Ww^{(q,\lambda)}_{a}(a)} \Ww^{(q,\lambda)}_{x}(x) - \Ww^{(q,\lambda)}_{x}(x;y) \Bigr) \dd y \nonumber \\
			&\;\;\;\;\;+ \; \int_{B\cap[0,b]} \Bigl[ \frac{W^{(q)}(b-y)}{W^{(q)}(b)} \E_x\bigl( e^{-q T_{b,Y}^-} \1_{ \{ T_{b,Y}^- < \tau_{a,Y}^+ \wedge \tau_{0,Y}^-\}} W^{(q)}(Y_{T_{b,Y}^-}) \bigr)\notag  \\
			&\;\;\;\;\;-  \uU^{(q,\lambda)}_{b,a}(x;y) + \frac{\Ww_{x}^{(q,\lambda)}(x)}{\Ww_{a}^{(q,\lambda)}(a)} \uU^{(q,\lambda)}_{b,a}(a;y)  \Bigr] \dd y 
			+ \frac{R^X(b,B)}{W^{(q)}(b)} \E_x\bigl( e^{-q T_{b,Y}^-} \1_{ \{ T_{b,Y}^- < \tau_{a,Y}^+ \wedge \tau_{0,Y}^-\}} W^{(q)}(Y_{T_{b,Y}^-}) \bigr). \label{eq:Cor1-SMPx>bIdentityFinal}
		\end{align}
		From Eqs.~\eqref{eq:Thm-SMPx<bIdentity1} and \eqref{eq:Cor1-SMPx>bIdentityFinal}, it suffices to derive $R^X(b,B)$. To do this, we consider whether $T_{b,U}^+$ or $\tau_{a,U}^+$ occurs first and use the strong Markov property  to find that 
		\begin{align}
			R^X (b,B)
			=& \; \E_b\left( \int^\infty_0 e^{-q t} \1_{\{U_t \in B, \; t < T_{b,U}^+ \wedge \tau_{a,U}^+ \wedge \tau_{a,U}^-\}} \dd t \right) + \E_b\left( \int^\infty_0 e^{-q t} \1_{\{U_t \in B, \; T_{b,U}^+ < t < \tau_{a,U}^+ \wedge \tau_{0,U}^-\}} \dd t \right) \notag \\
			=& \; \E_b\left( \int^\infty_0 e^{-q t} \1_{\{X_t \in B, \; t < T_{b,X}^+ \wedge \tau_{a,X}^+ \wedge \tau_{0,X}^-\}} \dd t \right) + \E_b\left(  e^{-q T_{b,X}^+ } \1_{\{ T_{b,X}^+ <  \tau_{a,X}^+ \wedge \tau_{0,X}^-\}} R^Y(X_{T_{b,X}^+},B) \right) \notag \\
			=& \; \int_{B\cap [0,a]} \Bigl(\frac{\overline{W}^{(q,\lambda)}_b(b)}{\overline{W}^{(q,\lambda)}_b(a)}\overline{W}^{(q,\lambda)}_{b-y}(a-y) - \overline{W}^{(q,\lambda)}_{b-y}(b-y) \1_{\{y \in [0,b]\}} \Bigr) \dd y \notag  \\
			& + \color{black} \int_{B\cap (b,a]} \Bigl[ \frac{\Ww_{a}^{(q,\lambda)}(a;y)}{\Ww_{a}^{(q,\lambda)}(a)} \E_b\bigl(  e^{-q T_{b,X}^+ } \1_{\{ T_{b,X}^+ <  \tau_{a,X}^+ \wedge \tau_{0,X}^-\}} \Ww_{X_{T_{b,X}^+}}^{(q,\lambda)}(X_{T_{b,X}^+}) \bigr) \notag \\
			&- \E_b\bigl(  e^{-q T_{b,X}^+ } \1_{\{ T_{b,X}^+ <  \tau_{a,X}^+ \wedge \tau_{0,X}^-\}} \Ww_{X_{T_{b,X}^+}}^{(q,\lambda)}(X_{T_{b,X}^+};y) \bigr) \Bigr] \dd y \notag \\
			& + \int_{B\cap[0,b]} \Bigl[ \frac{W^{(q)}(b-y)}{W^{(q)}(b)}  \E_b\bigl(  e^{-q T_{b,X}^+ } \1_{\{ T_{b,X}^+ <  \tau_{a,X}^+ \wedge \tau_{0,X}^-\}} \E_{X_{T_b^+}}\bigl( e^{-q T_{b,Y}^-} \1_{ \{ T_{b,Y}^- < \tau_{a,Y}^+ \wedge \tau_{0,Y}^-\}} W^{(q)}(Y_{T_{b,Y}^-}) \bigr) \bigr)\notag  \\
			&-  \E_b\bigl(  e^{-q T_{b,X}^+ } \1_{\{ T_{b,X}^+ <  \tau_{a,X}^+ \wedge \tau_{0,X}^-\}} \uU_{b,a}^{(q,\lambda)}(X_{T_{b,X}^+};y) \bigr) + \frac{\uU^{(q,\lambda)}_{b,a}(a;y)}{\Ww_{a}^{(q,\lambda)}(a)} \E_b\bigl(  e^{-q T_{b,X}^+ } \1_{\{ T_{b,X}^+ <  \tau_{a,X}^+ \wedge \tau_{0,X}^-\}} \Ww_{X_{T_{b,X}^+}}^{(q,\lambda)}(X_{T_{b,X}^+}) \bigr) \Bigr] \dd y\notag  \\
			&+  \frac{R^X(b,B)}{W^{(q)}(b)} \E_b\bigl(  e^{-q T_{b,X}^+ } \1_{\{ T_{b,X}^+ <  \tau_{a,X}^+ \wedge \tau_{0,X}^-\}} \E_{X_{T_b^+}}\bigl( e^{-q T_{b,Y}^-} \1_{ \{ T_{b,Y}^- < \tau_{a,Y}^+ \wedge \tau_{0,Y}^-\}} W^{(q)}(Y_{T_{b,Y}^-}) \bigr) \bigr), 
			\label{eq:Cor1-SMPx>bAuxIdentity-Intermediate1}
		\end{align}
		where the first term in the last equality follows by using Lemma \ref{lem:PoissonPotentialsforResolvents} (i) whilst the remaining terms  of the above equation follow by using Eq.~\eqref{eq:Cor1-SMPx>bIdentityFinal}.
		
		Excluding the the fourth expectation, we note that the remaining expectations of Eq.~\eqref{eq:Cor1-SMPx>bAuxIdentity-Intermediate1} are known from either Eqs.~\eqref{eq:Thm1Proof-ExpTb+} or \eqref{eq:Thm1-FundamentalBigEbIdentity}. To compute the fourth expectation of Eq.~\eqref{eq:Cor1-SMPx>bAuxIdentity-Intermediate1}, we have from Eq.~\eqref{eq:ConvolutionOfUbaFunc} in the Appendix that
		\begin{equation}
			\lambda \int^a_b W^{(q+\lambda)}(a-z) \uU^{(q,\lambda)}_{b,a}(z;y) \dd z = \overline{W}_{b-y}^{(q,\lambda)}(a-y) - \uU_{b,a}^{(q,\lambda)}(a;y), \notag 
		\end{equation}
		and hence by using Eq.~\eqref{eq:Tb+ExpectationEvaluation} and the above equation that
		\begin{equation}
			\begin{aligned}
				\E_b\bigl(  e^{-q T_{b,X}^+ } \1_{\{ T_{b,X}^+ <  \tau_{a,X}^+ \wedge \tau_{0,X}^-\}} \uU_{b,a}^{(q,\lambda)}(X_{T_{b,X}^+};y) \bigr) = \frac{\overline{W}_{b}^{(q,\lambda)}(b)}{\overline{W}_{b}^{(q,\lambda)}(a)}\Bigr(\overline{W}_{b-y}^{(q,\lambda)}(a-y) - \uU_{b,a}^{(q,\lambda)}(a;y) \Bigl).
			\end{aligned} \label{eq:Eb-Uba(XTb+;y)}
		\end{equation}
		By noticing that $\overline{W}^{(q,\lambda)}_{b-y}(b-y) = W^{(q)}(b-y)$ for $y \in [0,b]$, using the above equation and substituting Eqs.~\eqref{eq:Thm1Proof-ExpTb+}, \eqref{eq:Thm1-FundamentalBigEbIdentity} and \eqref{eq:Eb-Uba(XTb+;y)} into Eq.~\eqref{eq:Cor1-SMPx>bAuxIdentity-Intermediate1}, we get
		\begin{align}
			R^X (b,B)
			&= \int_{B\cap (b,a]} \frac{W^{(q)}(b)}{\overline{W}^{(q,\lambda)}_b(a)}\frac{\Ww_{b}^{(q,\lambda)}(a)}{\Ww_{a}^{(q,\lambda)}(a)}\Bigl[ \Ww_{a}^{(q,\lambda)}(a;y) + \frac{\Ww_{a}^{(q,\lambda)}(a)}{\Ww_{b}^{(q,\lambda)}(a)} \bigl( \overline{W}_{b-y}^{(q,\lambda)}(a-y)-\Ww_{b}^{(q,\lambda)}(a;y)\bigr) \Bigr]\notag \\
			& + \int_{B\cap[0,b]} \Bigl[ - \frac{W^{(q)}(b-y)}{\overline{W}^{(q,\lambda)}_b(a)}\frac{\Ww_{b}^{(q,\lambda)}(a)}{\Ww_{a}^{(q,\lambda)}(a)} \uU_{b,a}^{(q,\lambda)}(a) + \frac{W^{(q)}(b)}{\overline{W}^{(q,\lambda)}_b(a)}\frac{\Ww_{b}^{(q,\lambda)}(a)}{\Ww_{a}^{(q,\lambda)}(a)} \uU_{b,a}^{(q,\lambda)}(a;y) \Bigl] \dd y\notag  \\
			&+  \frac{R^X(b,B)}{W^{(q)}(b)} \Bigl( W^{(q)}(b) -\frac{W^{(q)}(b)}{\overline{W}^{(q,\lambda)}_b(a)}\frac{\Ww_{b}^{(q,\lambda)}(a)}{\Ww_{a}^{(q,\lambda)}(a)} \uU_{b,a}^{(q,\lambda)}(a)\Bigr). 
			\label{eq:Cor1-SMPx>bAuxIdentity-IntermediateFinal}
		\end{align}
		To solve for $R^X (b,B)$ in the above equation, we first observe that $\Gg_{x}^{(q,\lambda)}(x;y) =	\gamma_{b}^{(q,\lambda)}(x;y) = W^{(q)}(x-y) - \Ww^{(q,\lambda)}_x(x;y)$ for $y >b$, and thus have from Eqs.~\eqref{eq:SecondGenScaleFunc1} and \eqref{eq:Thm1-WConvolutionAuxiliaryFuncs} that 
		\begin{equation}
			\uU^{(q,\lambda)}_{b,a}(x;y) = \Ww_{x}^{(q,\lambda)}(x;y) + \frac{\Ww_{x}^{(q,\lambda)}(x)}{\Ww_{b}^{(q,\lambda)}(a)} \bigl( \overline{W}_{b-y}^{(q,\lambda)}(a-y)-\Ww_{b}^{(q,\lambda)}(a;y)\bigr), \quad y > b, \label{eq:UbaIdentityFory>b}
		\end{equation}
		and so substitituting the above equation with $x=a$ into Eq.~\eqref{eq:Cor1-SMPx>bAuxIdentity-IntermediateFinal} yields the desired quantity
		\begin{align*}
			R^X(b,B) =& \; \frac{{W}^{(q)}(b)}{\uU^{(q,\lambda)}_{b,a}(a)} \int_{B\cap (b,a]} \uU^{(q,\lambda)}_{b,a}(a;y) \dd y + \int_{B \cap [0,b]} \Bigl( \frac{{W}^{(q)}(b)}{\uU^{(q,\lambda)}_{b,a}(a)} \uU^{(q,\lambda)}_{b,a}(a;y) - W^{(q)}(b-y)  \Bigr) \dd y \\
			=& \; \int_{B} \Bigl( \frac{{W}^{(q)}(b)}{\uU^{(q,\lambda)}_{b,a}(a)} \uU^{(q,\lambda)}_{b,a}(a;y) - W^{(q)}(b-y)  \Bigr) \dd y, \notag 
		\end{align*}
		where the last equality follows since $W^{(q)}(b-y) = 0$ for $y \in (b,a]$.
		
		Finally, by substituting the above equation into Eq.~\eqref{eq:Thm-SMPx<bIdentity1}, we derive the result for $x \in [0,b]$. For $x \in (b,a]$,  we substitute $R^X(b,B)$ along with Eq.~\eqref{eq:Thm-ExW(q)Vb-ShiftedIdentity} into Eq.~\eqref{eq:Cor1-SMPx>bIdentityFinal} and then use Eq.~\eqref{eq:UbaIdentityFory>b} to get the required result.
		
		\vspace{1ex}
		
		\noindent\upshape{(ii)} 
		Observe that $\mathbb{E}_x\left(\int_0^{\infty} e^{-q t} \mathbf{1}_{\left\{U_t \in \mathrm{~d} y, t<\tau_{a, U}^{+} \wedge \tau_{0, U}^{-}\right\}} \mathrm{d} t\right) \leq \frac{1}{q}$, and furthermore that this potential measure increases for all Borel sets $B \subset[0, a]$ as $a \rightarrow \infty$. Since it is bounded from above, its limit as $a \rightarrow \infty$ must exist, and thus, by the monotone convergence theorem,
		
		\[
		\begin{aligned}
			\mathbb{E}_x\left(\int_0^{\infty} e^{-q t} \mathbf{1}_{\left\{U_t \in B, t<\tau_{0, U}^{-}\right\}} \mathrm{d} t\right) & =\lim _{a \rightarrow \infty} \int_{B \cap[0, a]} \mathbb{E}_x\left(\int_0^{\infty} e^{-q t} \mathbf{1}_{\left\{U_t \in \mathrm{~d} y, t<\tau_{a, U}^{+} \wedge \tau_{0, U}^{-}\right\}} \mathrm{d} t\right) \\
			& =\int_{B \cap[0, \infty)} \lim _{a \rightarrow \infty}\left(\frac{\mathcal{U}_{b, a}^{(q, \lambda)}(a ; y)}{\mathcal{U}_{b, a}^{(q, \lambda)}(a ; 0)} \mathcal{U}_{b, a}^{(q, \lambda)}(x ; 0)-\mathcal{U}_{b, a}^{(q, \lambda)}(x ; y)\right) \mathrm{d} y,
		\end{aligned}
		\]
			where we have used the potential density derived in part (i) in the final step. \color{black}
			The result then follows by using Eqs.~\eqref{eq:ConclusionofUuparrow(x;y)} and \eqref{eq:ConclusionofUuparrow(y)} from the proof of Proposition \ref{label:Prop-OneSidedExitDownwards}.
			
			\vspace{1ex}
				\noindent\upshape{(iii)} Using a level invariance argument, a similar argument as in (ii) to interchange the limits and integral, and also (i), 
				\begin{equation}
					\E_x\Bigl( \int^\infty_0 e^{-q t} \1_{\{U_t \in B, \; t <  \tau_{a,U}^+\}} \dd t \Bigr) = \lim\limits_{\theta \rightarrow \infty} \int_{B} \Bigl( \frac{\uU^{(q,\lambda)}_{b+\theta,a+\theta}(x+\theta)}{\uU^{(q,\lambda)}_{b+\theta,a+\theta}(a+\theta)}\uU^{(q,\lambda)}_{b+\theta,a+\theta}(a+\theta;y+\theta) - \uU^{(q,\lambda)}_{b+\theta,a+\theta}(x+\theta;y+\theta) \Bigr) \dd y, \notag
				\end{equation}
				and so we derive the above limit by taking the limits of the terms separately. 
				
				First, by observing that $\gamma^{(q,\lambda)}_{b+\theta}(x+\theta;y+\theta) = \gamma^{(q,\lambda)}_{b}(x;y)$ and hence that $\Gg^{(q,\lambda)}_{b+\theta}(x+\theta;y+\theta) = \Gg^{(q,\lambda)}_{b}(x;y)$, we have
				\begin{equation}
					\lim\limits_{\theta \rightarrow \infty} \uU^{(q,\lambda)}_{b+\theta,a+\theta}(x+\theta;y+\theta) = W^{(q)}(x-y) -\1_{\{x>b\}} \Bigl( \gamma^{(q,\lambda)}_{b}(x;y) -  \lim\limits_{\theta \rightarrow \infty} \frac{\overline{\W}^{(q,\lambda)}_{x-b}(x+\theta)}{\Ww^{(q,\lambda)}_{b+\theta}(a+\theta)}\Gg^{(q,\lambda)}_{b}(a;y)  \Bigr), \notag
				\end{equation}
				where
				\begin{equation}
					\Ww^{(q,\lambda)}_{b+\theta}(a+\theta) = \overline{\W}^{(q,\lambda)}_{a-b}(a+\theta) + \lambda \int^a_b W^{(q+\lambda)}(a-y) \overline{\W}^{(q,\lambda)}_{y-b}(y+\theta) \dd y. \notag
				\end{equation}
				Then, by using Eqs.~\eqref{eq:LimitofWbarTheta} -- \eqref{eq:LimitofWwTheta}, it is clear that $ \uU^{(q,\lambda)\downarrow}_{b,a}(x;y) = \lim\limits_{\theta \rightarrow \infty} \uU^{(q,\lambda)}_{b+\theta,a+\theta}(x+\theta;y+\theta)$ has the form of Eq.~\eqref{eq:LimitDown-Uba(x;y)}.
				The proof is then completed by using Eq.~\eqref{eq:ConclusionofUdownarrow(x)} from the proof of Proposition \ref{label:Prop-OneSidedExitAbove}. 
				
				\vspace{1ex}
	\noindent\upshape{(iv)}		We derive the desired identities by taking the limits of the terms of the potential measure from (iii).
	Additionally, in some of the limits below, the dominated convergence theorem is applied since its usage is justified by noticing that $W^{(q+\lambda)}(a-y)/W^{(q+\lambda)}(a)  \rightarrow \color{black} \ee^{-\Phi_{q+\lambda} y}$ as $a\rightarrow \infty$.
	
	Now, since we assume $\Phi_{q}>\varphi_{q+\lambda}$, we have that $\ee^{\varphi_{q+\lambda}(a-b)}/W^{(q+\lambda)}(a) \downarrow 0$ as $a \rightarrow \infty$, and hence, by Lemma \ref{lem:LimitIdentities} (ii) and the dominated convergence theorem,
	\begin{equation}
		\lim\limits_{a \rightarrow \infty} \frac{\Z^{(q)}(a-b,\varphi_{q+\lambda})}{W^{(q+\lambda)}(a)} = 0, \quad \lim\limits_{a \rightarrow \infty} \frac{\gamma_{b}^{(q,\lambda)}(a;y)}{W^{(q+\lambda)}(a)} = 0. \notag
	\end{equation}
	Using the above limits and the dominated convergence theorem, we obtain \[\overline{\uU}_{b}^{(q,\lambda)}(x;y) = \lim\limits_{a \rightarrow \infty} \uU_{b,a}^{(q,\lambda)\downarrow}(x;y), \quad \text{ and } \quad \overline{\uU}_{b}^{(q,\lambda)}(x) = \lim\limits_{a \rightarrow \infty} \uU_{b,a}^{(q,\lambda)\downarrow}(x),
	\]
	which have the same forms as Eqs.~\eqref{eq:LimitNatural-Uba(x;y)} and \eqref{eq:LimitNatural-Uba(x)}, respectively. 
	
	Now, we observe that $\lim\limits_{a \rightarrow \infty} \uU_{b,a}^{(q,\lambda)\downarrow}(a;y)/\W^{(q)}(a) = \lim\limits_{a \rightarrow \infty} \overline{\uU}_{b}^{(q,\lambda)}(a;y)/\W^{(q)}(a)$. Then, by using Eq.~\eqref{eq:AuxLimitW-gamma} and Eq.~\eqref{eq:LimitofRatioofScaleFuncs} to notice that
	\begin{equation}
		\lim\limits_{a \rightarrow \infty} \frac{\Z^{(q)}(a-b,\varphi_{q+\lambda})}{\W^{(q)}(a)} = \frac{\lambda}{\varphi_{q+\lambda} - \varphi_q} \ee^{- \varphi_q b}, \label{eq:LimitofZFunctionforPotential}
	\end{equation}
	we conclude that $$\widetilde{{\uU}}_{b}^{(q,\lambda)}(y) = \ee^{\varphi_q b}\lim\limits_{a \rightarrow \infty} \overline{\uU}_{b}^{(q,\lambda)}(a;y)/\W^{(q)}(a)
	$$ has the same form as Eq.~\eqref{eq:LimitHarpoons-Ub(y)}.
	
	Similarly, we have that $\lim\limits_{a \rightarrow \infty} \uU_{b,a}^{(q,\lambda)\downarrow}(a)/\W^{(q)}(a) = \lim\limits_{a \rightarrow \infty} \overline{\uU}_{b}^{(q,\lambda)}(a)/\W^{(q)}(a)$, and so we need to derive the limit
	\begin{equation}
		\lim\limits_{a \rightarrow \infty} \frac{1}{\W^{(q)}(a)} \bigl(e^{\Phi_q a} - \gamma_b^{(q,\lambda)\downarrow}(a) \bigr) =  \lim\limits_{a \rightarrow \infty} \frac{1}{\W^{(q)}(a)} \Bigl( -\lambda \int_0^{\infty} \ee^{\Phi_q (b-u)}\overline{\mathbb{W}}_{a-b}^{(q,\lambda)}(a-b+u) \color{black}\mathrm{d} u \Bigr), \notag
	\end{equation}
	but it easily follows from Eq.~\eqref{eq:LimitofWBoldOverline} and the dominated convergence theorem that 
	\begin{equation}
		\lim\limits_{a \rightarrow \infty} \frac{1}{\W^{(q)}(a)} \bigl(e^{\Phi_q a} - \gamma_b^{(q,\lambda)\downarrow}(a) \bigr) =  \ee^{-\varphi_q b}\Bigl( -\lambda \int_0^{\infty} \ee^{\Phi_q (b-u)}\Z^{(q+\lambda)}(u,\varphi_q)\mathrm{d} u \Bigr).
	\end{equation}
	Then, using the above equation as well as Eq.~\eqref{eq:LimitofZFunctionforPotential}, it can be seen that \[\widetilde{{\uU}}_{b}^{(q,\lambda)} = \ee^{\varphi_q b}\lim\limits_{a \rightarrow \infty} \overline{\uU}_{b}^{(q,\lambda)}(a)/\W^{(q)}(a),\]
	has the same form as Eq.~\eqref{eq:LimitHarpoons-Ub}.			
	\end{proof}

	\subsection{Application to ruin theory}
	\label{Applications}
	
	 In this subsection, we consider the application of the above model in an insurance setting. In particular, we assume $U$ represents the risk (surplus) process of an insurer who can switch between different business strategies (represented by the L\'evy processes $X$ and $Y$, respectively) at Poissonian observation times depending on the surplus level and are interested in the time and probability of ruin - the event that the surplus drops to a negative level. 
	
	In general, the probability of ruin can be obtained as a limiting result of Proposition \ref{label:Prop-OneSidedExitDownwards} as $q\rightarrow 0$. However, for the general case of arbitrary $X$ and $Y$ the result remains in a similar form, with the $q$-scale functions replaced by their limiting counterparts, and does not offer any further insight(s). Therefore, in the remainder of this section, we will consider a specific insurance context which allows us to obtain more explicit results. 
	
Let $X$ denote the L\'evy risk process of an insurer and ${Y} = \{ {Y}_t := {X}_t - \delta t \}_{t \geq 0}$, where $\delta > 0$ represents a constant dividend rate paid to shareholders whenever the surplus ($U$) is above the level $b$. In this case, Eq.\,\eqref{eq:SDE-Main} reduces to 
\begin{equation} \label{eq:SDE-MainMain}
	U_t = x + X_t - \delta \int^t_0 \1_{\{U_{T_{N(s)}} > b\}} \dd s , \;\; U_0 = x \in \R,
\end{equation}
and the level dependent Poissonian switching can be understood as a time delay between initiating (above $b$) and withdrawing (below $b$) dividend payments, reducing the model to one similar to \cite{WLL2023}. We note here that, in addition to the arguments of Remark \ref{UniquenessOfConstruction}, the form of the above SDE implies that uniqueness can also be proved by contradiction using similar methods as in \cite{KL2010}.
	
\color{black} As before, for each $q \geq 0$, $W^{(q)}$ and $Z^{(q)}$ are the $q$-scale functions associated with $X$ and that $\mathbb{W}^{(q)}$ and $\mathbb{Z}^{(q)}$ are the $q$-scale functions associated with $Y$.  Moreover, the right inverse of the Laplace exponent of $Y$ is now $\varphi_q=\sup \{\vartheta \geq 0: \psi(\vartheta)-\delta \vartheta=q\}$. It is clear that $\delta = 0$ yields that $Y = X$ and that $\W^{(q)} = W^{(q)}$.
	
	The following corollary provides the probability of ruin for the model described above. 
	\begin{corollary} Let $0 \leq b,\lambda < \infty$, where $\lambda$ is large enough so that $\Phi_{q+\lambda} > \varphi_q$. Then,  given the assumption that $0 \leq \delta < \E(X_1)$, the ruin probability for $x \geq 0$ is given by
		\begin{linenomath}
			\begin{equation}
				\PP_x( \tau_{0,U}^- < \infty) = 1 - \frac{ (\psi'(0+) - \delta) \cdot \uU^{(0,\lambda)\uparrow}_{b}(x;0)}{  1 + \delta \lambda \int^b_0 \W^{(\lambda)}(b-y)W(y) \dd y + \Z^{(\lambda)}(b) \frac{\int^\infty_b \ee^{-\Phi_{\lambda}u} \gamma_b^{(0,\lambda)}(u) \dd u}{\int^\infty_b \ee^{-\Phi_{\lambda}u} \overline{\W}_{u-b}^{(0,\lambda)}(u) \dd u} \color{black} }, \label{Cor:ProbofRuin}
			\end{equation}
		\end{linenomath}
  where $\uU^{(0,\lambda)\uparrow}_{b}(x;y)$ is given by Eq.~\eqref{eq:lim_a-of-Uba(x)}.
	\end{corollary}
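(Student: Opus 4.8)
The plan is to obtain the ruin probability as the $q\to 0$ limit of the one-sided downward exit identity in Proposition~\ref{label:Prop-OneSidedExitDownwards}, namely
\[
\PP_x(\tau_{0,U}^-<\infty)=\lim_{q\downarrow 0}\E_x\bigl(\ee^{-q\tau_{0,U}^-}\1_{\{\tau_{0,U}^-<\infty\}}\bigr)
=\lim_{q\downarrow 0}\Bigl(\vV^{(q,\lambda)\uparrow}_{b}(x)-\frac{\vV^{(q,\lambda)\uparrow}_{b}}{\uU^{(q,\lambda)\uparrow}_{b}(0)}\,\uU^{(q,\lambda)\uparrow}_{b}(x;0)\Bigr).
\]
First I would record the behaviour of the ingredients of the specialised model as $q\downarrow 0$. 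Under the assumption $0\le\delta<\E(X_1)=\psi'(0+)$, the process $Y_t=X_t-\delta t$ has positive drift, so $\psi^{*\prime}(0+)=\psi'(0+)-\delta>0$; hence $\varphi_0=0$ and $\varphi_q/q\to 1/(\psi'(0+)-\delta)$ as $q\downarrow 0$ (this is the standard relation $\lim_{q\downarrow 0}\varphi_q/q=1/\psi^{*\prime}(0+)$). Likewise $\Phi_0=0$, and since $\lambda$ is chosen large enough that $\Phi_{q+\lambda}>\varphi_q$ we may send $q\downarrow 0$ while keeping $\Phi_\lambda>\varphi_0=0$. The scale functions converge to their $0$-counterparts: $W^{(q)}\to W=W^{(0)}$, $Z^{(q)}\to 1$, $\W^{(q)}\to\W$, $\Z^{(q)}\to 1$, and $\Z^{(q+\lambda)}(\,\cdot\,,\varphi_q)\to\Z^{(\lambda)}(\,\cdot\,,0)=\Z^{(\lambda)}$, uniformly on compacts, by continuity of the scale functions in $q$.

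Next I would take the limit term by term. In the numerator $\vV^{(q,\lambda)\uparrow}_{b}(x)$, by Eq.~\eqref{eq:lim_a-of-Vba(x)} we get $Z^{(q)}(x)\to 1$, $\alpha_b^{(q,\lambda)}(x)\to\alpha_b^{(0,\lambda)}(x)$, $\overline{\W}_{x-b}^{(q,\lambda)}(x)\to\overline{\W}_{x-b}^{(0,\lambda)}(x)$ and the ratio of $\Phi_{q+\lambda}$-integrals converges to its value at $q=0$; comparing with Eq.~\eqref{eq:lim_a-of-Uba(x)} at $y=0$ one checks $\vV^{(0,\lambda)\uparrow}_{b}(x)=1$ (the bracketed correction term is exactly $1\cdot(\text{correction in }\uU^{(0,\lambda)\uparrow}_b(x;0))$ only if $\alpha=\gamma$, which is not the case, so more carefully: $\vV^{(0,\lambda)\uparrow}_b(x)$ has $Z^{(0)}=1$ in front and the bracket; one shows this bracket vanishes using the convolution identity $\lambda\int_0^b \overline{W}^{(q,\lambda)}_{x-b}(x-y)Z^{(q)}(y)\dd y=\overline{Z}^{(q+\lambda,-\lambda)}_b(x)-Z^{(q)}(x)$ from the remark, which at $q=0$ forces $\alpha_b^{(0,\lambda)}\equiv 0$ and hence $\vV^{(0,\lambda)\uparrow}_b(x)=1$). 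For the constant $\vV^{(q,\lambda)\uparrow}_{b}$ in Eq.~\eqref{eq:lim_a-of-Vba(a)/W(q+lambda)}, the troublesome factor is $\ee^{\varphi_q b}\bigl(q/\varphi_q+\lambda\int_0^b\ee^{-\varphi_q y}\Z^{(q+\lambda)}(y)\dd y\bigr)$; as $q\downarrow 0$ this tends to $(\psi'(0+)-\delta)+\lambda\int_0^b\Z^{(\lambda)}(y)\dd y$, and the remaining terms converge to $-\lambda\int_0^b\Z^{(\lambda)}(b-u,0)Z^{(0)}(u)\dd u=-\lambda\int_0^b\Z^{(\lambda)}(b-u)\dd u$ plus $\Z^{(\lambda)}(b)$ times the ratio of integrals; I would then simplify the combination $\lambda\int_0^b\Z^{(\lambda)}(y)\dd y-\lambda\int_0^b\Z^{(\lambda)}(b-u)\dd u$ to $\delta\lambda\int_0^b\W^{(\lambda)}(b-y)W(y)\dd y$ using the representation $\Z^{(\lambda)}(y)=1+\lambda\int_0^y\W^{(\lambda)}(z)\dd z$ together with $Y=X-\delta\cdot$ and the convolution relations between $W,\W$ (this is where the specific form $Y_t=X_t-\delta t$ enters and produces the $\delta\lambda\int_0^b\W^{(\lambda)}(b-y)W(y)\dd y$ term). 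Finally $\uU^{(q,\lambda)\uparrow}_{b}(0)\to\uU^{(0,\lambda)\uparrow}_{b}(0)$ from Eq.~\eqref{eq:lim_a-of-Uba(a)/W(q+lambda)} with $\Z^{(\lambda)}(b-y,0)=\Z^{(\lambda)}(b)$ at $y=0$, giving the denominator $1+\delta\lambda\int_0^b\W^{(\lambda)}(b-y)W(y)\dd y+\Z^{(\lambda)}(b)\frac{\int_b^\infty\ee^{-\Phi_\lambda u}\gamma_b^{(0,\lambda)}(u)\dd u}{\int_b^\infty\ee^{-\Phi_\lambda u}\overline{\W}_{u-b}^{(0,\lambda)}(u)\dd u}$ after dividing numerator and denominator by $\Z^{(q+\lambda)}(b,\varphi_q)\to 1$.

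Assembling the pieces, $\PP_x(\tau_{0,U}^-<\infty)=1-\dfrac{\vV^{(0,\lambda)\uparrow}_{b}}{\uU^{(0,\lambda)\uparrow}_{b}(0)}\uU^{(0,\lambda)\uparrow}_{b}(x;0)$ with $\vV^{(0,\lambda)\uparrow}_b$ equal to $(\psi'(0+)-\delta)$ times the denominator-type expression, which after cancellation yields exactly Eq.~\eqref{Cor:ProbofRuin}. Throughout, interchanging limit with the integrals defining $\alpha_b$, $\gamma_b$, $\uU^{(q,\lambda)\uparrow}_b$ and the $\Phi_{q+\lambda}$-integrals is justified by dominated convergence: on $[0,a]$ the integrands are continuous in $q$ at $0$ and locally bounded, and for the infinite integrals $\int_b^\infty\ee^{-\Phi_{q+\lambda}u}(\cdots)\dd u$ the decay $\ee^{-\Phi_{q+\lambda}u}$ with $\Phi_{q+\lambda}$ bounded away from the growth rate of $\overline{\W}_{u-b}^{(q,\lambda)}(u)$ (guaranteed by $\Phi_{q+\lambda}>\varphi_q$, cf.\ Lemma~\ref{lem:LimitIdentities}) provides an integrable dominating function uniformly for small $q$. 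The main obstacle I expect is precisely the algebraic simplification in the second paragraph: showing that the $q\downarrow 0$ limit of $\ee^{\varphi_q b}(q/\varphi_q+\lambda\int_0^b\ee^{-\varphi_q y}\Z^{(q+\lambda)}(y)\dd y)$ combines with $-\lambda\int_0^b\Z^{(\lambda)}(b-u,0)Z^{(0)}(u)\dd u$ and the ratio term to give the clean factor $(\psi'(0+)-\delta)$ times the common denominator, which requires careful use of $\varphi_0=0$, the asymptotics $q/\varphi_q\to\psi^{*\prime}(0+)=\psi'(0+)-\delta$, the identity $Z^{(0)}\equiv 1$, and the $\delta$-convolution identity relating $\W^{(\lambda)}$ and $W^{(\lambda)}$ (equivalently $W$, since at level $0$ we may use $W=W^{(0)}$); once this is done the rest is bookkeeping.
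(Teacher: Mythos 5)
Your overall strategy matches the paper's: take the limit $q\downarrow 0$ of the formula in Proposition~\ref{label:Prop-OneSidedExitDownwards} and evaluate each piece. However, there are genuine errors in the execution that would prevent the computation from closing.

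First, your argument that $\vV^{(0,\lambda)\uparrow}_b(x)=1$ rests on the convolution identity from the Remark, $\lambda\int_0^b\overline{W}^{(q,\lambda)}_{x-b}(x-y)Z^{(q)}(y)\,\dd y=\overline{Z}^{(q+\lambda,-\lambda)}_b(x)-Z^{(q)}(x)$. But that identity is stated and valid only when $X=Y$ (i.e.\ $\delta=0$), since it requires $\W^{(q)}=W^{(q)}$ and $\Z^{(q)}=Z^{(q)}$; here $Y=X-\delta\cdot$ with $\delta>0$, and $\alpha^{(q,\lambda)}_b$ mixes $W$-functions of $Y$ with $Z$-functions of $X$, so the Remark does not apply. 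The correct resolution, which the paper uses, is the explicit identity from \cite{WLL2023} (Eq.~\eqref{eq:WAN_AlphaIdentity}): $\alpha_b^{(q,\lambda)}(x)=-\delta q\int_0^x\overline{\W}^{(q,\lambda)}_{x-b}(x-u)W^{(q)}(u)\,\dd u$. The explicit factor of $q$ is what makes $\alpha_b^{(0,\lambda)}\equiv 0$ and hence $\vV^{(0,\lambda)\uparrow}_b(x)=1$; it also kills the last term in $\vV^{(q,\lambda)\uparrow}_b$.

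Second, you have misidentified where the term $\delta\lambda\int_0^b\W^{(\lambda)}(b-y)W(y)\,\dd y$ comes from. You claim it arises as the simplification of $\lambda\int_0^b\Z^{(\lambda)}(y)\,\dd y-\lambda\int_0^b\Z^{(\lambda)}(b-u)\,\dd u$ inside the limit of $\vV^{(q,\lambda)\uparrow}_b$; but those two integrals are identical (substitute $u\mapsto b-u$), so they cancel to zero. That cancellation, together with $q/\varphi_q\to\psi'(0+)-\delta$ and $\varphi_0=0$, gives $\lim_{q\downarrow 0}\vV^{(q,\lambda)\uparrow}_b=\psi'(0+)-\delta$ cleanly; nothing $\delta$-dependent survives there. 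The $\delta\lambda\int$ term actually enters through the denominator $\lim_{q\downarrow 0}\uU^{(q,\lambda)\uparrow}_b(0)$, via the identity (A18) of \cite{WLL2023}, $\lambda\int_0^b\Z^{(\lambda)}(b-u)W(u)\,\dd u=\Z^{(\lambda)}(b)-1-\delta\lambda\int_0^b\W^{(\lambda)}(b-u)W(u)\,\dd u$, applied to Eq.~\eqref{eq:lim_a-of-Uba(a)/W(q+lambda)} at $y=0$. Relatedly, your final step of ``dividing numerator and denominator by $\Z^{(q+\lambda)}(b,\varphi_q)\to 1$'' is false: $\Z^{(\lambda)}(b)=1+\lambda\int_0^b\W^{(\lambda)}(y)\,\dd y$ is not $1$ unless $b=0$. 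In short, the high-level approach is right, but the two identities from \cite{WLL2023} that do the real work are missing, and the bookkeeping you describe would not produce the stated formula as written.
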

	
	\begin{proof}
		
		Observe from Proposition \ref{label:Prop-OneSidedExitDownwards} that
		\begin{linenomath}
			\begin{equation}
				\PP_x( \tau_{0,U}^- < \infty) = \lim\limits_{q \downarrow 0} \E_x\Big(e^{-q \tau_{0,U}^-} \1_{\{ \tau_{0,U}^- < \infty \}} \Big) =\lim\limits_{q \downarrow 0} \vV^{(q,\lambda)\uparrow}_{b}(x) -  \lim\limits_{q \downarrow 0} \frac{\vV^{(q,\lambda)\uparrow}_{b}}{\uU^{(q,\lambda)\uparrow}_{b}(0)} \; \lim\limits_{q \downarrow 0}\uU^{(q,\lambda)\uparrow}_{b}(x;0). \notag
			\end{equation}
		\end{linenomath}
Now, recall from \cite[Proposition 2.1]{WLL2023} that the choice of SNLPs $X_t$ and $Y_t = X_t - \delta t$ gives
		\begin{linenomath}
			\begin{equation}
				\alpha_{b}^{(q,\lambda)}(x) = -\delta q \int^x_0 \overline{\W}^{(q,\lambda)}_{x-b}(x-u)W^{(q)}(u) \dd u, \label{eq:WAN_AlphaIdentity}
			\end{equation}
		\end{linenomath}
  and so using the above equation along with Eq.~\eqref{eq:lim_a-of-Vba(x)} yields that $\lim\limits_{q \downarrow 0} \vV^{(q,\lambda)\uparrow}_{b}(x) = 1$. Furthermore, it is clear that  $\uU^{(0,\lambda)\uparrow}_{b}(x;0) = \lim\limits_{q \downarrow 0}\uU^{(q,\lambda)\uparrow}_{b}(x;0)$.
  
  Additionally, since the condition $0 \leq \delta < \E(X_1)$ implies that $\varphi_0 = 0$, the form of the denominator in Eq.~\eqref{Cor:ProbofRuin} can be found by using Eq.~\eqref{eq:lim_a-of-Uba(a)/W(q+lambda)} to take $\lim\limits_{q \downarrow 0} \uU^{(q,\lambda)\uparrow}_{b}(0)$ and then observing that $ \lim\limits_{q \downarrow 0} \Z^{(q + \lambda)}(b,\varphi_q) = \Z^{(\lambda)}(b)$ and that
  \begin{linenomath}
			\begin{equation}
				\lambda \int^b_0 \Z^{(\lambda)}(b-u) W(u) \dd u = \Z^{(\lambda)}(b) - 1 - \delta \lambda \int^b_0\W^{(\lambda)}(b-u)W(u) \dd u, \notag
			\end{equation}
		\end{linenomath}
		see \cite[Equation (A18)]{WLL2023}.

		Lastly, we observe from Eqs.~\eqref{eq:lim_a-of-Vba(a)/W(q+lambda)} and \eqref{eq:WAN_AlphaIdentity} that
			\begin{equation}
	           	\lim\limits_{q\downarrow 0} \vV^{(q,\lambda)\uparrow}_{b} =  \lim\limits_{q\downarrow 0} \ee^{\varphi_q b} \Bigl(\frac{q}{\varphi_q} + \lambda \int^b_0 \ee^{-\varphi_q y}\Z^{(q+\lambda)}(y) \dd y \Bigr)  - \lim\limits_{q\downarrow 0} \lambda \int^{b}_0  \Z^{(q+\lambda)}(b-u,\varphi_q)Z^{(q)}(u) \dd u. \notag
		\end{equation}
            As previously mentioned, $\varphi_0 = 0$ by our assumption, and $\lim_{q \rightarrow 0} \Z^{(q+\lambda)}(b-u,\varphi_q) = \Z^{(\lambda)}(b-u)$ and $\lim_{q \rightarrow 0} Z^{(q)}(x) = 1$
		which both follow by using Eq.~\eqref{eq:GeneralisedZFunc}. Finally, by noticing that $\lim_{q \rightarrow 0} \tfrac{q}{\varphi_q} = \psi'(0+) - \delta > 0$ under our assumption as well as these previous observations, we get
            \begin{equation}
				\lim\limits_{q\downarrow 0} \vV^{(q,\lambda)\uparrow}_{b} = \psi'(0+) - \delta, \notag
		\end{equation}
	which completes the proof.

	\end{proof}	
	%
	\section*{Appendix}
	The theorem below is a collection of classical fluctuation identities which have been used in the preceding text. See, for example, \cite[Chapter 8]{K2014} for the origin of these identities.
	
	\begin{lemma}[see \cite{K2014}] \label{Thm:ClassicalFluctuationIdentities}
		Let $X$ be a spectrally negative Lévy process and 
		\[
		\tau_{a,X}^{+}=\inf \left\{t>0: X_t>a\right\} \quad \text { and } \quad \tau_{0,X}^{-}=\inf \left\{t>0: X_t<0\right\} .
		\]
		\upshape{(i)}. For $q \geq 0$ and $x \leq a$
		\begin{equation}
			\mathbb{E}_x\left(\mathrm{e}^{-q \tau_{a,X}^{+}} \mathbf{1}_{\left\{\tau_{a,X}^{+}<\tau_{0,X}^{-}\right\}}\right)=\frac{W^{(q)}(x)}{W^{(q)}(a)},  \label{eq:ClassicalExitfromAbove}
		\end{equation}
		and 
		\begin{equation}
			\mathbb{E}_x\left(\mathrm{e}^{-q \tau_{0,X}^{-}} \mathbf{1}_{\left\{\tau_{0,X}^{-}<\tau_{a,X}^{+}\right\}}\right)=Z^{(q)}(x) - \frac{W^{(q)}(x)}{W^{(q)}(a)}Z^{(q)}(a) . \label{eq:ClassicalExitfromBelow}
		\end{equation}
		\upshape{(ii)}. For any $a>0, x, y \in[0, a], q \geq 0$
		\begin{equation}
			\int_0^{\infty} \mathrm{e}^{-q t} \mathbb{P}_x\left(X_t \in \mathrm{d} y, t<\tau_{a,X}^{+} \wedge \tau_{0,X}^{-}\right) \mathrm{d} t= u^{(q)}(x,a,y) \mathrm{d} y,  \label{eq:ClassicalKilledPotential}
		\end{equation}
		where 
		\begin{equation}
			u^{(q)}(x,a,y) = \frac{W^{(q)}(x) }{W^{(q)}(a)}W^{(q)}(a-y)-W^{(q)}(x-y).  \label{apost1}
		\end{equation}
		
		\noindent The following lemma is a consequence of Lemma 2.1 in \cite{LRZ2014}.
		\begin{lemma}[see \cite{LRZ2014}]
			For $p,p+q \geq 0$ and $0 \leq b \leq x \leq a$, it holds that
			\begin{equation}
				\E_x\bigl( \ee^{-(p+q) \tau_{b,X}^-} \1_{\{\tau_{b,X}^- < \tau_{a,X}^+\}} W^{(p)}(X_{\tau_{b,X}^-} - y) \bigr) = \overline{W}^{(p,q)}_{b-y}(x-y) - \frac{W^{(p+q)}(x-b)}{W^{(p+q)}(a-b)}\overline{W}^{(p,q)}_{b-y}(a-y), \quad \quad y \in [0,b),\label{eq:lem-RonnieOccupationDownJump}
			\end{equation}
			where $\overline{W}^{(p,q)}_b$ is given in Eq.~\eqref{eq:SecondGenScaleFunc1}.
			
		\end{lemma}
		
		The lemma below yields an identity required for the derivation of the fluctuation identities.
		\begin{lemma}
			Let $0 < b \leq a$ and $0 < \lambda < \infty$. Then, for $q \geq 0$ and $0 \leq x,y \leq a$, we have
			\begin{equation}
				\lambda \int^a_b W^{(q+\lambda)}(a-y) \vV_{b,a}^{(q,\lambda)}(y) \dd y = \overline{Z}^{(q,\lambda)}_{b}(a) - \vV_{b,a}^{(q,\lambda)}(a), \label{eq:ConvolutionOfVbaFunc}
			\end{equation}
			and
			\begin{equation}
				\lambda \int^a_b  W^{(q+\lambda)}(a-z) \uU^{(q,\lambda)}_{b,a}(z;y) \dd z =  \overline{W}_{b-y}^{(q,\lambda)}(a-y) - \uU^{(q,\lambda)}_{b,a}(a;y), \label{eq:ConvolutionOfUbaFunc}
			\end{equation}
			where $\uU^{(q,\lambda)}_{b,a}$ and $\vV^{(q,\lambda)}_{b,a}$   are defined in Eqs.\eqref{eq:UScaleFunc} and \eqref{eq:VScaleFunc}, respectively.
			
		\end{lemma}
		
		\begin{proof}
			First, we prove Eq.~\eqref{eq:ConvolutionOfVbaFunc}. By substituting the form of Eq.~\eqref{eq:VScaleFunc} into the integral, we get that
			\begin{align}
				\lambda \int^a_b W^{(q+\lambda)}(a-y) \vV_{b,a}^{(q,\lambda)}(y) \dd y =& \; \lambda \int^a_b W^{(q+\lambda)}(a-y) Z^{(q)}(y) \dd y - \lambda \int^a_b W^{(q+\lambda)}(a-y)\mathcal{A}_{y}^{(q, \lambda)}(y) \dd y \notag \\
				& + \color{black} \frac{\mathcal{A}_{b}^{(q, \lambda)}(a)}{\mathcal{W}_{b}^{(q, \lambda)}(a)} \lambda \int^a_b W^{(q+\lambda)}(a-y) \mathcal{W}_{y}^{(q, \lambda)}(y) \dd y \notag \\
				=& \; \overline{Z}_b^{(q,\lambda)}(a) - Z^{(q)}(a) + \Aa_{a}^{(q,\lambda)}(a) - \Aa_{b}^{(q,\lambda)}(a) \notag \\
				& + \color{black} \frac{\mathcal{A}_{b}^{(q, \lambda)}(a)}{\mathcal{W}_{b}^{(q, \lambda)}(a)} \bigl( \Ww_{b}^{(q, \lambda)}(a) - \Ww_{a}^{(q, \lambda)}(a) \bigr) \notag \\
				=& \; \overline{Z}_b^{(q,\lambda)}(a) - \vV_{b,a}^{(q, \lambda)}(a), \notag
			\end{align}
			where the second equality has used  Eqs.~\eqref{eq:SecondGenScaleFunc2}, \eqref{eq:Thm1-WConvolutionAuxiliaryFuncs} and \eqref{eq:Thm1-AConvolutionAuxiliaryFuncs}, and the last equality uses Eq.~\eqref{eq:VScaleFunc}.
			\\
			\indent One can derive Eq.~\eqref{eq:ConvolutionOfUbaFunc} similarly by substituting the form of \eqref{eq:UScaleFunc} to get 
			\begin{align}
				\lambda \int^a_b W^{(q+\lambda)}(a-z) \uU^{(q,\lambda)}_{b,a}(z;y) \dd z 
				=& \; \lambda \int^a_b  W^{(q+\lambda)}(a-z) W^{(q)}(z-y) \dd z - \lambda \int^a_b W^{(q+\lambda)}(a-z) \Gg^{(q,\lambda)}_{z}(z;y) \dd z \notag \\
				&+ \frac{\Gg^{(q,\lambda)}_{b}(a;y)}{\Ww^{(q,\lambda)}_{b}(a)}\lambda \int^a_b W^{(q+\lambda)}(a-z) \Ww^{(q,\lambda)}_{z}(z) \color{black} \dd z \notag \\
				=& \; \overline{W}_{b-y}^{(q,\lambda)}(a-y) -W^{(q)}(a-y) + \Gg^{(q,\lambda)}_{a}(a;y) - \Gg^{(q,\lambda)}_{b}(a;y) \notag \\
				&+ \frac{\Gg^{(q,\lambda)}_{b}(a;y)}{\Ww^{(q,\lambda)}_{b}(a)} \Bigl( \Ww^{(q,\lambda)}_{b}(a) - \Ww^{(q,\lambda)}_{a}(a) \Bigr) \notag \\
				=& \; \overline{W}_{b-y}^{(q,\lambda)}(a-y) - \uU_{b,a}^{(q,\lambda)}(a;y) , \notag 
			\end{align}
			where the second equality is obtained by using Eqs.~\eqref{eq:SecondGenScaleFunc1}, \eqref{eq:Thm1-WConvolutionAuxiliaryFuncs} and \eqref{eq:Thm1-GConvolutionAuxiliaryFuncs}.
		\end{proof}
		
	\end{lemma}
	\section*{Acknowledgement}
The authors are grateful to the anonymous referees for their constructive comments and suggestions that have improved the content and presentation of this paper. 	We are also grateful  to Ronnie Loeffen for suggesting the approach considered in Section \ref{ConstructionofSwitching}.

	\bibliography{Apostolos.bib}
	\bibliographystyle{abbrv}

\end{document}